\documentclass[10pt, a4paper]{amsart}

\usepackage[utf8]{inputenc}
\usepackage[english]{babel}
\usepackage{url}
\usepackage{xcolor}
\usepackage[all,cmtip]{xy}
\usepackage{graphicx}
\usepackage{adjustbox}

\usepackage{hyperref}

\usepackage{amsmath,amssymb,enumerate,mathrsfs,mathtools,amsthm,emptypage,array,xpatch,faktor,xfrac}
\usepackage{tikz}

\newtheoremstyle{thmstyle}
  {}
  {}
  {\itshape}
  {}
  {\bfseries}
  {.}
  { }
  {}
\theoremstyle{thmstyle}
\newtheorem{thm}{Theorem}[section]
\newtheorem{corollary}[thm]{Corollary}
\newtheorem{proposition}[thm]{Proposition}
\newtheorem{lemma}[thm]{Lemma}

\newtheorem*{que*}{Question}
\newtheorem*{thm*}{Theorem}

\newtheorem{step}[thm]{Step}

\newtheoremstyle{defstyle}
  {}
  {}
  {}
  {}
  {\bfseries}
  {.}
  { }
  {}
\theoremstyle{defstyle}
\newtheorem{definition}[thm]{Definition}
\newtheorem{remark}[thm]{Remark}
\newtheorem*{remark*}{Remark}
\newtheorem{remdef}[thm]{Remark - Definition}

\newtheorem{example}[thm]{Example}
\newtheorem{setup}[thm]{Setup}
\newtheorem*{generality}{Generality conditions}

\newcommand{\Proj}{\operatorname{Proj}}
\newcommand{\Spec}{\operatorname{Spec}}

\newcommand{\Pic}{\operatorname{Pic}}

\newcommand{\Ne}{\operatorname{NE}}
\newcommand{\NE}{\overline\Ne}
\newcommand{\Exc}{\operatorname{Exc}}
\newcommand{\codim}{\operatorname{codim}}
\newcommand{\eff}{\operatorname{Eff}}
\newcommand{\Eff}{\overline\eff}
\newcommand{\Nef}{\operatorname{Nef}}
\newcommand{\ph}{\varphi}
\newcommand{\w}{\widetilde}
\newcommand{\ma}{\mathcal}

\newcommand{\ol}{\mathcal{O}}
\newcommand{\wi}{\widehat}
\newcommand{\pr}{\mathbb{P}}
\newcommand{\PP}{{\pr^1\times\pr^1}}
\newcommand{\af}{\mathbb{A}}
\newcommand{\Q}{\mathbb{Q}}
\newcommand{\C}{\mathbb{C}}
\newcommand{\R}{\mathbb{R}}
\newcommand{\Z}{\mathbb{Z}}

\newcommand{\Bl}{\operatorname{Bl}}

\newcommand{\Bs}{\operatorname{Bs}}
\newcommand{\virg}[1]{\textquotedblleft #1\textquotedblright}

\title{The blow-up of the cubic fourfold along eight planes is Fano}
\author{Saverio Andrea Secci}
\address{Saverio Andrea Secci, SISSA -- Scuola Internazionale Superiore di Studi Avanzati, Via Bonomea 265, 34136 Trieste, Italy}
\email{ssecci@sissa.it}

\makeatletter
\@namedef{subjclassname@2020}{\textup{2020} Mathematics Subject Classification}

\makeatother
\subjclass[2020]{14J45, 14J35, 14J70, 14E30, 14E05}
\keywords{Fano varieties, cubic fourfolds, anticanonical divisor, Veronese surface}

\calclayout 

\numberwithin{figure}{section}
\numberwithin{table}{section}
\numberwithin{equation}{section}

\begin{document}
\maketitle
\begin{abstract}
We prove that the blow-up $Z_m$ of a smooth cubic fourfold along $m\leq8$ planes, pairwise meeting at distinct points and satisfying explicit generality conditions, is a smooth Fano fourfold of Picard rank $\rho(Z_m) =m+1\leq9$; in particular, we obtain three new families of smooth Fano fourfolds with $\rho\geq7$ that are not a product of surfaces.
Complementary to this result, we show that a smooth cubic fourfold as above exists.

Moreover, we study the birational geometry of $Z_m$ and show that it corresponds exactly to that of a smooth del Pezzo surfaces $S_m$, obtained as the blow-up of $\pr^2$ along $m\leq8$ points in general position.
Namely, $Z_m$ and $S_m$ have isomorphic Mori cones, pseudo-effective cones with same Mori chamber decomposition, and there is a one-to-one correspondence between the contractions of $Z_m$ and the contractions of $S_m$.

Lastly, we study the anticanonical linear system of $Z_m$ and prove that it is basepoint free if and only if $m\leq7$.
For $m=8$ and a special configurations of planes, we show that $\Bs(|-K_{Z_8}|)$ has dimension $2$ and contains a smooth del Pezzo surface of degree $1$.

\end{abstract}
\section{Introduction}
The classification of smooth complex Fano varieties has been a central problem in both classical and modern algebraic geometry: from the classification of del Pezzo surfaces by the Italian school in the 1920s, to the classification of Fano threefolds in the 1980s by Mori and Mukai for Picard rank $\rho \geq 2$ and by Iskovskikh and Prokhorov for Picard rank $\rho=1$, to the boundedness in arbitrary dimension by Koll{\'a}r, Miyaoka and Mori \cite{KMMo2} in the early 1990s.
It then became clear that the classification of higher-dimensional smooth Fano varieties was theoretically possible and this fuelled modern research on the subject.

Already in dimension $4$, only the classification above a certain bound is complete: in a striking recent result \cite{Casa25}, building on \cite{Casa24}, Casagrande proved that smooth Fano fourfolds with Picad rank $\rho > 9$ are products of del Pezzo surfaces, generalizing Mori and Mukai's analogous result in dimension $3$ \cite{MM81}.
Below this threshold, however, the situation is considerably richer, and only partial classification results are currently available.
At present, only $5$ families of smooth Fano fourfolds with $\rho \geq 7$ that are not a product of surfaces are known: the Fano model of the blow-up of $\pr^4$ along $m\in\{6,7,8\}$ points in general position \cite[Example 7.2 and references therein]{CS24}, with $\rho=m+1\in\{7,8,9\}$, and two new examples with $\rho=7$ constructed in \cite[Proposition 1.9]{CS24}.

One of the cases studied in \cite{Casa25} is the class of Fano fourfolds that are not a product of surfaces, and with no small contractions.
The following is a structure theorem.
\begin{thm*}{\cite[Theorem 1.2]{Casa25}}
Let $X$ be a smooth Fano fourfold with $\rho(X)\geq 7$, not isomorphic
to a product of surfaces, and with no small elementary contractions.
Then, $\rho(X)\leq 9$, and there are a cubic fourfold $Z\subset\pr^5$ with at most isolated ordinary double points, and $m := \rho(X)-1$ planes $\Pi_1,\dots,\Pi_m\subset Z$ pairwise meeting at distinct points, such that $X$ is obtained from $Z$ by blowing up one plane and successively the transforms of the other ones.
\end{thm*}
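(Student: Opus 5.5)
The approach is to run a Mori-theoretic analysis of $X$ built on Casagrande's results on Fano fourfolds with large Picard rank, in particular the Lefschetz defect and the theory of rational contractions and fixed prime divisors from \cite{Casa24}. Since $X$ is not a product of surfaces and $\rho(X)\geq 7$, I would first invoke the bound on the Lefschetz defect: for every prime divisor $D\subset X$ one has $\delta_X\leq 3$, and if $\delta_X=3$ then $X$ is a product. So we may assume $\delta_X\leq 2$. The next step is to use the classification of the fixed prime divisors of type $(3,2)$: these are exceptional divisors of elementary divisorial contractions sending $E$ onto a surface, and by \cite{Casa24} there are many of them when $\rho\geq 7$. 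Since $X$ has no small elementary contractions, every extremal ray of $\NE(X)$ is either of fiber type or divisorial. Ruling out fiber-type rays by the $\rho$ bound (a conic bundle or quadric fibration would force $\delta_X$ large), we conclude that every extremal ray is divisorial. Using the classification of divisorial elementary contractions of smooth Fano fourfolds with one-dimensional fibers, the generic ray should be of type $(3,2)^{sm}$, the blow-up of a smooth fourfold along a smooth surface.

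The second stage is an MMP-type descent. I would choose a sequence of $(3,2)$ rays with pairwise "disjoint" exceptional divisors $E_1,\dots,E_m$, with $m=\rho(X)-1$, whose intersection pattern is controlled by \cite{Casa24}: the $E_i\cdot f_j$ computations give a root-system-like combinatorics. Contracting them successively yields $X\to Y$ with $\rho(Y)=1$. I would check that each contraction preserves the relevant properties, with the target remaining (mildly singular) and the images of the remaining $E_j$ still contractible, so that $Y$ is a Fano fourfold of Picard rank one with at most isolated singularities, whose index and degree can be computed from $-K_X=f^*(-K_Y)-\sum E_i$.

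The third stage identifies $Y$ and the centers. Restricting to the fibers $f_i\cong\pr^1$ of $E_i\to S_i$ and using $-K_X\cdot f_i=1$ together with the nefness constraints, I would compute $-K_Y=3H$ with $H^4=3$. This should force $Y$ to be a cubic fourfold $Z\subset\pr^5$ whose singularities (images of configurations where $E_i$ met) are ordinary double points. The surfaces $S_i$ would then be planes: their degree is computed from $(-K_X)^2\cdot E_i$-type intersections, which should give $H^2\cdot S_i=1$. Finally, $E_i\cdot f_j=0$ or $1$ implies that the planes meet pairwise in single points, and these points are distinct. The hardest step I expect is the second one, namely controlling the singularities of the intermediate targets and excluding fiber-type and non-$(3,2)$ rays so that the descent terminates at a cubic with only nodes. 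For this I would rely most heavily on the detailed case analysis of \cite{Casa25}.
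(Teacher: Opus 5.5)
The paper does not prove this statement. It quotes it from \cite[Theorem 1.2]{Casa25} as motivation and proves only the converse direction, for smooth $Z$ (Theorem \ref{main:fano}). Judged on its own, your outline has gaps exactly where the theorem has content. At each of those points it either defers to ``the case analysis of \cite{Casa25}'', which is circular, or gives an argument that cannot work.

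\emph{The descent.} The heart of the matter is producing $\rho(X)-1$ divisors of type $(3,2)$ with $E_i\cdot f_j=0$ that can be contracted one after another down to Picard rank one. You give no mechanism for this, and a maximal family of mutually ``orthogonal'' divisors need not have $\rho(X)-1$ members. On $Z_m$ itself, take the strict transform of the hyperplane section through $\Pi_1\cup\Pi_2$ together with $E_3,\dots,E_m$. This is a maximal such family (cf.\ Lemma \ref{H_ij:intersection} and Theorem \ref{m=8}), but it has only $\rho-2$ members. Contracting it ends at the Verra fourfold $T$ of Remark \ref{H_12}, which has $\rho(T)=2$ (Corollary \ref{contractions:X}\emph{iii.}); this mirrors contracting $l_{12},e_3,\dots,e_m$ on $S_m$ to $\pr^1\times\pr^1$. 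So you need a way to select the right family, and a reason why the images of the remaining $E_j$ stay contractible on the intermediate varieties, which are not Fano.

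The $E_i$ are also not disjoint: $E_i\cap E_j\cong\pr^1\times\pr^1$. Your heuristic for the nodes is wrong as well. $E_i\cap E_j$ lies over $p_{ij}$, which is a smooth point of $Z$ in the paper's setting. A node of $Z$ lying on a plane $\Pi$ appears instead as a $\pr^2$-fibre of $E\to\Pi$, since blowing up a plane through a fourfold node is smooth there. Such a ray is precisely one that is not of type $(3,2)^{sm}$.

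\emph{The identification.} Your third stage extracts nothing. The equality $-K_X\cdot f_i=-E_i\cdot f_i=1$ holds for any blow-up of a smooth surface in a smooth fourfold. Since $f_i$ is contracted, it carries no information about $-K_Y$, so it cannot yield $-K_Y=3H$ or $H^4=3$. Likewise, reading off $H^2\cdot S_i=1$ from $(-K_X)^2\cdot E_i$ presupposes that you already know $H$ and these numbers. What is needed is genuine input on the centres, for instance that each $S_i$ is a smooth del Pezzo surface with $-K_{Y|S_i}=-K_{S_i}$. For planes in a cubic this is the triviality of $\det\ma N^*$ in \eqref{chern_M}; compare how Lemma \ref{condition:m} exploits it. On top of that you need an argument forcing $S_i\cong\pr^2$ and $Y$ of index $3$ and degree $3$.

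The configuration is also not detected by $E_i\cdot f_j$. A general $f_j$ lies over a general point of $\Pi_j$ and misses $E_i$ whether or not the two planes meet; indeed $E_i\cdot f_j=0$ for all $i\neq j$ (Table \ref{intersections}). Moreover, two planes in $\pr^5$ are in general disjoint. So the facts that the planes meet pairwise, and at distinct points (equivalently $E_i\cap E_j\cap E_k=\emptyset$), each need their own proof. Finally, the bound $\rho(X)\le 9$ is never addressed; in the paper it comes from \cite[Theorem 1.1]{Casa25}.
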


Reversing the process when $Z$ is smooth, that is showing that such a blow-up is indeed Fano, was previously known only for $m=1,2$ \cite[$\S11.7$]{Casa25}, while the remaining cases $3\leq m \leq 8$ were open; see \cite[Question 11.6]{Casa25}.
In this paper we answer this question for a general smooth cubic fourfold by proving the following
\begin{thm}\label{main:fano}
Let $Z\subset \pr^5$ be a general smooth cubic fourfold containing $m \leq 8$ planes $\Pi_1,\dots,\Pi_m$ pairwise meeting at distinct points\footnote{By \emph{general} we mean belonging to a non-empty open subset, not necessarily dense, of the corresponding parameter space within $|\ol_{\pr^5}(3)|$, which may be reducible.}.
Then, the blow-up $\sigma \colon Z_m \to Z$ along $\Pi_1\cup\dots\cup \Pi_m$ is a smooth Fano fourfold with no small contractions.
\end{thm}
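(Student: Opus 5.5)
We plan to prove Theorem~\ref{main:fano} by computing $-K_{Z_m}$ explicitly, showing that it is nef using a suitable decomposition, and then upgrading nefness to ampleness by identifying exactly which curves could be $K$-trivial.

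\textbf{Setup.} Let $H$ be the pullback of the hyperplane class and $E_i$ the exceptional divisor over $\Pi_i$ (or over its strict transform). Since the $\Pi_i$ meet pairwise in single points, the strict transforms become disjoint after the first blow-ups. We expect the strict transform of $\Pi_j$ in $\Bl_{\Pi_i}Z$ to be $\Bl_{p}\Pi_j\cong\mathbb F_1$. The order of blow-ups does not matter, so $Z_m$ is smooth and independent of the order. Since $K_Z=-3H$ and each $\Pi_i$ has codimension $2$,
\[
-K_{Z_m}=3H-\sum_{i=1}^m E_i .
\]
By Kleiman's criterion, we must show $-K_{Z_m}\cdot C>0$ for every curve $C\subset Z_m$.

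\textbf{Nefness.} First we handle the curves coming from the relevant geometry.
\begin{itemize}
\item The fibres $f_i$ of $E_i\to\Pi_i$ satisfy $-K\cdot f_i=1$.
\item Curves not contained in $\bigcup E_i$ are handled by writing $-K_{Z_m}$ as a sum of effective divisors with no common component through such curves.
\end{itemize}
For the second point, for each $i$ we use the pencil of hyperplanes containing $\Pi_i$. Its residual cubic surfaces, or more precisely the quadric bundle structure of the projection from $\Pi_i$, give $H-E_i$ nef on $\Bl_{\Pi_i}Z$. Indeed, $|H-E_i|$ is base point free, defining the quadric surface fibration $\Bl_{\Pi_i}Z\to\pr^2$. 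Summing three such classes gives
\[
3H-\textstyle\sum E_i=\sum_{k=1}^3 (H-E_{i_k})-\sum_{i\neq i_k}E_i,
\]
which alone is not enough. Instead we exploit the planes themselves: for $m\ge 3$, any three planes $\Pi_i,\Pi_j,\Pi_k$ that meet pairwise span a hyperplane section. Under the generality conditions, a hyperplane $\langle\Pi_i,\Pi_j\rangle$ (a $\pr^4$ when $\Pi_i\cap\Pi_j$ is a point) cuts $Z$ in a cubic threefold containing both planes. We then search for a decomposition of $-K_{Z_m}$ of the form
\[
-K_{Z_m}\equiv \sum (\text{strict transforms of hyperplane sections containing several } \Pi_i)+(\text{nef part}),
\]
in analogy with $-K_{S_m}$ on the del Pezzo surface. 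The analogy is that $\Pi_i$ corresponds to a point and $H$ to a line class; this is the correspondence announced in the abstract. We then check $-K\cdot C\ge 0$ on curves lying in each component by restricting to it. The restriction of $-K_{Z_m}$ to $E_i\cong\pr(N_{\Pi_i/Z})$ and to the strict transforms of the planes is computed through the normal bundle $N_{\Pi/Z}$. Its Chern classes are $c_1=\ol(1)$ and $c_2=3$, from $0\to T_\Pi\to T_Z|_\Pi\to N\to0$, giving $c_2(N)=\Pi^2=3$ on a cubic fourfold. The strict transforms of the $\Pi_j$ meeting $\Pi_i$ are corrected by the blow-up points.

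\textbf{Ampleness and the main obstacle.} The remaining step is to exclude curves with $-K\cdot C=0$. This reduces to the finitely many classes of curves lying in $E_i$ or in the residual loci of the hyperplane sections above. Generality is used here to ensure that no line of $Z$ meets too many planes. For example, a line $\ell$ meeting $\Pi_{i_1},\dots,\Pi_{i_4}$ would have $-K\cdot\widetilde\ell=3-4<0$, so such lines must be excluded. Similarly, conics meeting seven planes (intersection $6-7$) and twisted cubics meeting all $\Pi_i$ twice must be excluded. These correspond to the $(-1)$-curves of $S_m$. This is the step we expect to be hardest: controlling all low-degree rational curves in $Z$ and their incidence with the planes, and showing that for general $Z$ every such curve has $-K$-degree $\ge 1$. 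We would first classify the potential extremal rays via Casagrande's structure results. Then, by a dimension count in the parameter space of cubics containing the configuration, we would show that the bad incidences occur in positive codimension. Finally, no small contractions follows since every extremal ray is spanned either by the fibres $f_i$, which give divisorial contractions, or by the divisorial families of lines, conics and cubics just described.
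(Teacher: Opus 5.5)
There is a genuine gap. For $m\ge 4$ your proposal contains no argument that actually controls $\NE(Z_m)$. Nefness is left to a decomposition you only say you will ``search for'', and ampleness to a classification and dimension count that are never carried out. Several of the ingredients you do state would also not work:

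\emph{(i)} The decomposition you dismiss, $-K_{Z_3}=\sum_{i=1}^3(H-E_i)$, is exactly what settles $m=3$. Each $H-E_i$ is pulled back from the quadric bundle of $\Bl_{\Pi_i}Z$, so $-K_{Z_3}$ is globally generated. A $K$-trivial curve would then map into $Q_{\Lambda_1}\cap Q_{\Lambda_2}\cap Q_{\Lambda_3}$, which \textbf{G0}, \textbf{G2} and Lemma~\ref{lemma:geometry-3planes} rule out. For $m\ge 4$ the extra terms $-E_i$, $i\ge 4$, destroy this argument, and you give no replacement.

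\emph{(ii)} Your premise that three pairwise-meeting planes span a hyperplane is false in the relevant case. Under \textbf{G0} they span $\pr^5$, and if they did lie in a hyperplane, $Z_3$ would not be Fano (Lemma~\ref{condition:3bis}).

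\emph{(iii)} Using Casagrande's structure theorem to list candidate extremal rays is circular, since that theorem assumes the fourfold is already Fano.

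\emph{(iv)} A dimension count first needs a finite list of bad incidences, which you do not have. Each incidence must then be shown to be a proper condition, and this is delicate. For instance, every cubic through eight planes tangent to a Veronese surface contains a ninth such plane (Proposition~\ref{prop:base-locusV}). Showing that \textbf{G2} holds generically already takes the Horace-method computation of Proposition~\ref{expected-dim}.

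\emph{(v)} Your list of curves is incomplete. For $m=8$ the extremal rays include fibre classes of $H$-degree $4,5,6$, namely fibres of the contractions of the strict transforms of the hypersurface sections in Corollary~\ref{Eff(X)}. So lines, conics and cubics cannot give ampleness or the absence of small contractions.

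Smaller points:
\begin{itemize}
\item $c_1(N_{\Pi/Z})=0$, not $\ol(1)$.
\item Positivity of $-K_{Z_m}|_{E_i}$ depends not on Chern classes but on the splitting type of $N_{\Pi_i/Z}$ along the lines $\overline{p_{ij}\,p_{ik}}$ (Corollary~\ref{co:line-types}).
\item Kleiman's criterion requires positivity on $\NE(Z_m)\setminus\{0\}$, not merely on curves.
\end{itemize}

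The missing idea is the Cremona-like transformation. For three planes satisfying $\boldsymbol{(\star)}$, the divisor $2H-E_1-E_2-E_3$ is nef and big with fourth power $3$. It contracts the divisors $H_{ij}\simeq H-E_i-E_j$, giving a second blow-down $Z_3\to Z'$ onto another smooth cubic fourfold along three planes (Theorem~\ref{thm:cremona}). This realises the quadratic Cremona isometry geometrically, so the Weyl group acts on $\Pic(Z_m)\cong\Pic(S_m)$ through genuine blow-down structures $\sigma_\zeta$. Consequently:
\begin{itemize}
\item each $(-1)$-class $d$ gives an exceptional divisor whose fibre class $\beta(d)$ spans an extremal ray with $-K_{Z_m}\cdot\beta(d)=1$;
\item each facet of $\beta(\NE(S_m))$ is cut out by a nef divisor $H_\zeta$ or $H_\zeta-E_{\zeta,i}$.
\end{itemize}
Lemma~\ref{lem:cones} then yields $\NE(Z_m)=\beta(\NE(S_m))$, a polyhedral cone on which $-K_{Z_m}$ is positive, with no classification of curves needed. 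The absence of small contractions follows from the resulting correspondence with the contractions of $S_m$. Nonemptiness of the open set is Theorem~\ref{existence}.
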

Note that, since $Z$ is smooth, blowing up one plane and successively the transforms of the other ones or blowing up the scheme-theoretic union of the planes gives isomorphic varieties; see Corollary \ref{blowup-planes}.

One can see that $Z_m$ is not a product of surfaces; hence $\rho(Z_m)\leq 9$ by \cite[Theorem 1.1]{Casa25}, and the bound $m=\rho(Z_m)-1\leq 8$ is sharp.
In particular, with Theorem \ref{main:fano} we construct three new families of smooth Fano fourfolds with Picard rank $\rho\geq 7$, one for each value of $\rho\in\{7,8,9\}$, bringing the total number of known families to $8$; we refer to Table \ref{final table} for their numerical invariants.

Moreover, the generality assumption in Theorem \ref{main:fano} can be made precise: we identify explicit conditions -- the generality conditions $\boldsymbol{(\star)}$ -- which are both necessary and sufficient for $Z_m$ to be Fano; see $\S$\ref{gen_cond} and Theorem \ref{m=8}.

\medskip

As part of the proof of Theorem \ref{main:fano} we study the birational geometry of $Z_m$, which we fully describe.
\begin{thm}\label{main:cone}
Let $Z_m$ be as in Theorem \ref{main:fano}, and let $S_m$ be a smooth del Pezzo surface obtained as the blow-up $g\colon S_m\to \pr^2$ at $m\leq 8$ points in general position.
Then:
\begin{enumerate}[i.]\setlength\itemsep{4pt}
\item $\NE(Z_m)\cong\NE(S_m)$, and there is a one-to-one correspondence between the contractions of $Z_m$ and the contractions of $S_m$;
\item $\Eff(Z_m)\cong\Eff(S_m)$, with isomorphic Mori chamber decompositions.
\end{enumerate}
\end{thm}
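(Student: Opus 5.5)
Throughout we identify $N_1(Z_m)$ and $N^1(Z_m)$ with $N_1(S_m)$ and $N^1(S_m)$. Let $H=\sigma^*\ol_Z(1)$ and let $E_i$ be the exceptional divisor over $\Pi_i$. Then $N^1(Z_m)=\langle H,E_1,\dots,E_m\rangle$. We send $H\mapsto 3L-\sum e_i$, i.e.\ $-K_{S_m}$, and $E_i\mapsto$ a suitable exceptional class. A more natural choice is to look for the divisor classes on $Z_m$ that play the role of the $(-1)$-curves of $S_m$.

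For each plane $\Pi_i$, projecting from $\Pi_i$ makes $Z$ birational to a quadric-surface bundle over $\pr^2$. The residual divisors $D_i=H-E_i$ restrict to residual quadric surfaces. For a pair $\Pi_i,\Pi_j$ meeting at a point, the hyperplanes containing both planes form a pencil, and in the blow-up the residual linear sections give divisors $H-E_i-E_j$. For the $\pr^2$-type classes one looks for the quadrics through five planes, of class $2H-\sum_{5}E_i$, and for cubic sections singular along the planes, and so on. The key numerical invariant is the intersection form. Pick curve classes $f_i$ (a fiber of $E_i\to\Pi_i$) and $\ell$ (a general line in $Z$). Then $E_i\cdot f_j=-\delta_{ij}$, $H\cdot f_i=0$ and $H\cdot\ell=1$. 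Because the planes are pairwise transversal at a point, the normal bundle computation $N_{\Pi/Z}$ (with $c_1=0$, since $K_Z=-3H$ and $K_\Pi=-3h$) gives $-K_{Z_m}=3H-\sum E_i$. This already matches $-K_{S_m}=3L-\sum e_i$ if we set $H\leftrightarrow L$, $E_i\leftrightarrow e_i$, $\ell\leftrightarrow$ line and $f_i\leftrightarrow$ exceptional curve. Under this dictionary the pairing $N^1\times N_1$ agrees with the intersection form of $S_m$.

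Step 1: the Mori cone. For each $(-1)$-curve class $C$ of $S_m$, namely $e_i$, $L-e_i-e_j$, $2L-\sum_5 e$, $3L-2e_i-\sum_6 e$, etc., I would exhibit an effective curve family on $Z_m$ with the corresponding class. These are fibers $f_i$, strict transforms of lines meeting $\Pi_i$ and $\Pi_j$, conics meeting five planes, and so on. Each family should sweep out a divisor $G_C$ with $G_C\cdot C=-1$. Dually, for each extremal ray one has the divisor classes. Then I would show that $-K_{Z_m}$ is ample by Kleiman: any irreducible curve $\Gamma$ either lies in some $G_C$, where one computes directly, or satisfies $G_C\cdot\Gamma\ge0$ for all $C$. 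The cone of $S_m$ is dual to those inequalities together with $L$. This is where the generality conditions $(\star)$ enter, to exclude extra curves such as lines meeting three planes, in analogy with points on a line or conic. Nefness of $-K$ then reduces to nefness on the $E_i$, which are $\pr^1$-bundles over $\Pi_i$, and on the $G_C$. This yields $\NE(Z_m)\cong\NE(S_m)$.

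Step 2: contractions. Faces of $\NE$ correspond to faces of $\NE(S_m)$. Each extremal ray is divisorial, contracting $G_C$ onto a surface with $\pr^1$-fibers, in the way $E_i\to\Pi_i$ does; hence there are no small contractions. Composite contractions are realized by the corresponding nef classes, whose semiampleness follows from basepoint-freeness on $Z_m$ (the Fano property plus the basepoint-free theorem).

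Step 3: $\Eff$ and the chambers. Since $Z_m$ is Fano it is a Mori dream space. $\Eff(Z_m)$ is generated by the $G_C$, mirroring $\Eff(S_m)$ generated by the $(-1)$-curves. Because there are no small contractions and everything is divisorial, the Mori chambers are the Zariski chambers cut by the $G_C$, exactly as on $S_m$.

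The main obstacle I expect is Step 1: producing the effective divisors $G_C$ of high degree for $m=7,8$, such as the classes analogous to $6L-3e_1-2\sum e$, and controlling all curves inside them under $(\star)$.
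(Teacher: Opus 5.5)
There is a genuine gap, and it sits where you suspect: Step 1 is a program rather than a proof, and Steps 2 and 3 depend on it. Your final dictionary ($H\leftrightarrow L$, $E_i\leftrightarrow e_i$ on divisors; $\ell\leftrightarrow L$, $f_i\leftrightarrow e_i$ on curves) is the right one; it is the paper's $\alpha,\beta$. But you give no way to do either of the following.

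\emph{First}, you must produce, for every $(-1)$-class $d$ of $S_m$, an irreducible divisor $G_C$ in class $\alpha(d)$ swept out by curves of class $\beta(d)$. For $m=8$ these include sextic sections triple along one plane and double along seven. A parameter count gives neither existence, nor irreducibility, nor the ruling.

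\emph{Second}, you must control $-K_{Z_m}$ on curves lying inside the $G_C$. Already for $G_C=E_i$ this is ampleness of the rank-two bundle $-K_{P_i}\otimes\mu_i^*\ma N^*_{\Pi_i/Z}$ on the del Pezzo surface $P_i$; note that $E_i$ is a $\pr^1$-bundle over $P_i$, not over $\Pi_i$. This ampleness depends on the splitting of the conormal bundle along lines through two of the $p_{ij}$, conics through five, and so on. For $m=3$ it is exactly the first/second type condition on the lines $\ell_i$, which is equivalent to \textbf{G0} together with \textbf{G2}.

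Moreover, even granting ampleness, ``this yields $\NE(Z_m)\cong\NE(S_m)$'' needs the inclusion $\NE(Z_m)\subseteq\beta(\NE(S_m))$. The inequalities $G_C\cdot\gamma\geq 0$ cut out $\beta(\Nef(S_m))$, not $\beta(\NE(S_m))$. So your dichotomy only handles curves outside $\bigcup G_C$. For curves inside the $G_C$ you would need nefness of $\alpha(L')$ and $\alpha(L'-e')$ for every marking of $S_m$.

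The missing idea is the paper's Cremona-like transformation.
\begin{enumerate}
\item The paper proves $m=3$ directly. There $-K_{Z_3}=\sum(H-E_i)$ is a sum of globally generated quadric-bundle classes, and a curve on which all three vanish lies in three residual quadrics; this is excluded using \textbf{G0} and \textbf{G2}.
\item It then shows that $2H-E_1-E_2-E_3$ contracts $Z_3$ onto another smooth cubic fourfold $Z'$. The exceptional divisors are the strict transforms of $(\Pi_i+\Pi_j)\cap Z$; note that this hyperplane is unique, not a pencil.
\item Applied to all triples, these maps and the permutations of planes generate a group acting on $\Pic(Z_m)$ exactly as ${\rm Cris}(S_m)$ acts on $\Pic(S_m)$.
\item By transitivity on markings, every $\alpha(d)$ is an exceptional divisor of a blow-down $\sigma_\zeta\colon Z_m\to Z_\zeta$ onto a cubic fourfold, with fibres of class $\beta(d)$.
\item The facet-supporting classes are then $H_\zeta$ and $H_\zeta-E_{\zeta,i}$, which are nef for free. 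Lemma \ref{lem:cones} gives $\beta(\NE(S_m))=\NE(Z_m)$.
\end{enumerate}
Ampleness of $-K_{Z_m}$ is then deduced from the cone via Kleiman, not used as an input, so no curve-by-curve analysis inside the $G_C$ is ever needed. Your Steps 2 and 3 are essentially the paper's once the cone is known. One exception: for $m=1$ the ray $\beta(L-e_1)$ is the quadric bundle, not a divisorial contraction.
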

Recall that smooth Fano varieties are Mori dream spaces \cite[Corollary 1.3.2]{BCHM}, and we refer to \cite{HK00} for the Mori chamber decomposition.

An important aspect of Theorem \ref{main:cone} is the remarkable correspondence between $Z_m$ and $S_m$, where the blow-up $\sigma \colon Z_m \to Z$ along the union of $m$ planes mirrors exactly the blow-up $g\colon S_m\to \pr^2$ at $m$ points in general position.
We prove that $Z_m$ and $S_m$ share the same birational geometry, which is governed by the combinatorial data of the Mori cone (item \emph{i.}) and the pseudo-effective cone (item \emph{ii.}); see Corollaries \ref{Eff(X)}, \ref{Nef(X)} and \ref{contractions:X} for more detailed statements.

These isomorphisms of cones are induced by linear isomorphisms $\ma N_1(Z_m)\to\ma N_1(S_m)$ and $\ma N^1(Z_m)\to\ma N^1(S_m)$, respectively; see Remark \ref{alpha}.
We have $\NE(S_m)=\Eff(S_m)$ since $S_m$ is a surface, hence $\NE(Z_m)\cong\Eff(Z_m)$ by Theorem \ref{main:cone}: to each extremal ray $R$ of $\NE(Z_m)$ whose elementary contraction $c_R$ is divisorial, there corresponds the extremal ray of $\Eff(Z_m)$ generated by the class of $\Exc(c_R)$.

It is natural to ask whether $S_m$ can be identified with some special subvariety of $Z_m$, but this is not always the case; see Remark \ref{embedded surface}.

\medskip

Finally, we study the anticanonical linear system $|-K_{Z_m}|$ and prove the following.
\begin{thm}\label{thm:base-locus}
Let $Z_m$ be as in Theorem \ref{main:fano}.
Then, $-K_{Z_m}$ is globally generated if and only if $m\leq7$.

If $m=8$ and $\Pi_1,\dots,\Pi_8$ are tangent to a Veronese surface $\ma V\subset \pr^5$, then the base locus $\Bs(|-K_{Z_8}|)$ has dimension $2$ and contains a smooth del Pezzo surface of degree $1$.
\end{thm}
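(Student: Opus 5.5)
We study $|-K_{Z_m}|$ through the identity $-K_{Z_m}=\sigma^*\ol_Z(3)-\sum_i E_i$, where $E_i$ is the exceptional divisor over $\Pi_i$. Since $K_Z=\ol_Z(-3)$ and each plane has codimension $2$, the sections of $-K_{Z_m}$ correspond to cubics in $\pr^5$ containing $\Pi_1\cup\dots\cup\Pi_m$, taken modulo the equation of $Z$. Thus $\Bs(|-K_{Z_m}|)$ lies over the base locus of the linear system $|I_{\Pi_1\cup\dots\cup\Pi_m}(3)|$. Inside $\sigma^{-1}(\Pi_i)$ we must also check that these cubics separate normal directions. The dictionary of Theorem \ref{main:cone} guides the answer. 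On $S_m$, the anticanonical system is the system of cubics through $m$ points, which is basepoint free exactly for $m\le7$, and for $m=8$ has exactly one base point. So I expect $m\le 7$ to be free and $m=8$ to fail.

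\emph{Step 1 ($m\le7$).} First count dimensions. The planes meet pairwise at points, so $h^0(I_{\cup\Pi_i}(3))\ge 56-10m+\binom m2$. This is positive for all $m\le 8$, and for $m=8$ it equals $4$. For global generation when $m\le7$, I would argue by induction on $m$, degenerating within the generality conditions $\boldsymbol{(\star)}$. The model configuration I would use is planes tangent to a Veronese surface $\ma V$, i.e.\ planes spanned by tangent planes at points $p_i\in\ma V\cong\pr^2$. In that model, products of a quadric containing $\ma V$-related data with linear forms reduce the cubics through $\cup\Pi_i$ to plane cubics through the points $p_i$.

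Concretely, I would exhibit enough reducible cubics of the form $Q\cdot H$ with $H\supset\Pi_j$ and $Q\supset\bigcup_{i\ne j}\Pi_i$, together with cubics that are unions of three hyperplanes. These show that the base locus away from the exceptional divisors is empty. On $E_i\cong\pr(N)$, I would verify surjectivity via the restriction $|-K_{Z_m}|\to|-K_{Z_m}|_{E_i}|$. Global generation is an open condition, so it then extends to general $Z$.

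\emph{Step 2 ($m=8$, not globally generated).} Here I use Theorem \ref{main:fano}: $Z_8$ is Fano, and the Riemann--Roch count gives $h^0(-K_{Z_8})=4+1=5$, the ambient cubics plus the constant $F$. Compute $(-K_{Z_8})^4$ via intersection theory on the blow-up: it is $3\cdot 81$ minus the plane contributions, computed with the Segre classes of the normal bundles $N_{\Pi_i/Z}$. If $|-K|$ were free, it would define a morphism to $\pr^4$. A morphism from a fourfold by a rank-$5$ system forces $(-K)^4$ to equal $\deg(\text{image})\cdot\deg(\text{map})$, which is incompatible with the computed value being nonzero of the wrong form, or with finiteness. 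Alternatively, and more robustly, I would observe that four general members meet in a nonempty set, so a base point exists whenever $(-K)^4>0$ and $h^0=5$ forces the map to be non-finite or undefined. I prefer the direct argument: the analogue of the ninth base point of cubics through $8$ points of $\pr^2$.

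\emph{Step 3 (Veronese case).} Identify $\ma V\cong\pr^2$, with $\Pi_i$ the tangent planes at $p_i$. The cubics through $\cup\Pi_i$ restrict on $\ma V$ to sextics singular at the $p_i$, hence to twice the cubics through the $p_i$ on $\pr^2$. These define the pencil with base point $p_9$. Then show that the strict transform of a suitable surface is contained in the base locus. The natural candidate is the strict transform of $\ma V\cap Z$ or of the associated surface, which is isomorphic to $\pr^2$ blown up at $p_1,\dots,p_8$, i.e.\ a degree $1$ del Pezzo surface. Check that every section of $-K_{Z_8}$ vanishes on it, by restricting to $\tilde{\ma V}$ and using $-K_{Z_8}|_{\tilde{\ma V}}$ versus $h^0$. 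This gives $\dim\Bs=2$, with the upper bound coming from Step 1-type reducible cubics.

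The main obstacle is Step 1's verification along the exceptional divisors and at the intersection points $\Pi_i\cap\Pi_j$, where the explicit cubics must separate normal directions.
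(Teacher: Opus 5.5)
Your proposal has genuine gaps in all three steps. In Step 2 the count is wrong: $h^0(-K_{Z_8})=3$, not $5$. Sections of $-K_{Z_8}$ are cubics through the eight planes \emph{modulo} the equation of $Z$, by $0\to\ol_{\pr^5}\to\mathfrak I_{8,3}\to\ma I_{\Pi_1\cup\dots\cup\Pi_8/Z}\otimes\ol_Z(3)\to0$. So you must subtract one rather than add one. Also, your count $4$ is only a lower bound, so the exact value has to come from Kodaira vanishing and Riemann--Roch: $h^0(-K_{Z_8})=\chi(-K_{Z_8})=1+\frac{1}{12}(2K_{Z_8}^4+K_{Z_8}^2\cdot c_2(Z_8))=1+\frac{1}{12}(6+18)=3$. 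The conclusion is then immediate. A free $|-K_{Z_8}|$ would define a morphism $Z_8\to\pr^2$, forcing $(-K_{Z_8})^4=0$, which contradicts ampleness. With your value $5$, a morphism to $\pr^4$ gives no contradiction: for a free system, four general members simply meet in $(-K_{Z_8})^4=3$ points. The ``ninth base point'' argument cannot replace this either. It only concerns the special Veronese configuration, and failure of global generation does not propagate from a special member to the general one.

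In Step 3 the surface is misidentified: $\ma V\cap Z$ is a curve of degree $12$ unless $\ma V\subset Z$. The relevant surface is the ninth tangent plane $\Pi_9^{\ma V}=T_{\nu(q_9)}\ma V$, where $q_9$ is the ninth base point of the pencil of cubics through $q_1,\dots,q_8$ (your $p_i$). Suppose $\Pi_9^{\ma V}\not\subset Z$. By Lemma \ref{tangent-planes}, $Z\cap\Pi_9^{\ma V}$ is a cubic of that pencil, so it passes through $\nu(q_9)$. Hence $\nu^{-1}(Z\cap\ma V)$ is a sextic double at $q_1,\dots,q_8$ \emph{and} passing through $q_9$. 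By Lemma \ref{sextics} it is some $C_1\cup C_2$, hence singular at $q_9$. Thus $\Pi_9^{\ma V}\subset T_{\nu(q_9)}Z$, and $Z\cap\Pi_9^{\ma V}$ is singular at a base point of the pencil, which is impossible. Your assertion that sextics double at the eight points are ``twice the cubics'' is false: they form a $\pr^3$, and only the hyperplane of those through $q_9$ consists of products $C_1C_2$. The strict transform of $\Pi_9^{\ma V}$ is the plane blown up at the eight points $\Pi_9^{\ma V}\cap\Pi_j^{\ma V}$, a del Pezzo surface of degree one. The bound $\dim\Bs(|-K_{Z_8}|)\le2$ comes from the fact that the anticanonical system of a Fano fourfold has no fixed part (Kawamata), not from reducible cubics.

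Step 1, the direction that needs real work, is not carried out, and the explicit cubics you propose do not exist in the decisive case $m=7$. No quadric contains six of the planes: its class $2H-\sum_{i\ne j}E_i$ would correspond via $\alpha$ to the non-effective $(-2)$-class $2l-\sum_{i\ne j}e_i$ on $S_7$ (cf.\ Lemma \ref{I_6(2)}). And by \textbf{G0}, three hyperplanes contain at most six of the planes.

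The missing idea is to work on $Z_7$ with the dictionary of Theorem \ref{m=8}; this also dissolves your ``main obstacle'' along the exceptional divisors. For each $i$ one has $-K_{Z_7}\simeq E_i+H_{3,i}$, where $H_{3,i}\simeq3H-2E_i-\sum_{j\ne i}E_j$ is the strict transform of the cubic section double along $\Pi_i$ and containing the other six planes. The classes $3l-2e_i-\sum_{j\ne i}e_j$ are seven pairwise disjoint $(-1)$-curves on $S_7$. So the $H_{3,i}$ are the exceptional divisors of another blow-up $\sigma_\zeta\colon Z_7\to Z_\zeta$ along seven planes, and any three of them have empty intersection, exactly like any three of the $E_i$. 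Choosing one divisor from each pair $\{E_i,H_{3,i}\}$ always picks at least four of the same kind, hence $\Bs(|-K_{Z_7}|)\subseteq\bigcap_{i=1}^7(E_i\cup H_{3,i})=\emptyset$. The cases $m<7$ then follow by descending from $Z_7$, since global generation passes from $Z_m$ to $Z_{m-1}$, not by an upward induction. No degeneration to explicit cubics is needed.
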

The special configuration of planes tangent to a Veronese surface $\ma V\subset \pr^5$ plays an important role throughout the entirety of the paper, as it presents us with an explicit example having unique symmetries.

This special configuration yields in fact two results.
We show that a smooth cubic fourfold $Z$ containing general planes $\Pi_1,\dots,\Pi_8$ tangent to $\ma V$ always contains a ninth plane $\Pi_9$ tangent to $\ma V$, whose strict transform in $Z_8$ is an irreducible component of $\Bs(|-K_{Z_8}|)$; see Corollary \ref{co:base-locusV} and Remark \ref{last-added remark}.
Moreover, we see in Corollary \ref{abelian fibration} that if the scheme-theoretic base locus $\Bs(|-K_{Z_8}|)$ is smooth and irreducible, then the blow-up $Z_9 \to Z$ along $\Pi_1\cup\dots\cup\Pi_9$ -- equivalently, the blow-up $Z_9 \to Z_8$ along $\Bs(|-K_{Z_8}|)$ -- admits an abelian fibration $\Phi\colon Z_9 \to \pr^2$ such that $-K_{Z_9}\simeq\Phi^*\ol_{\pr^2}(1)$.
This further strenghtens the correspondence between $Z_8$ and $S_8$ in this special case, where we identify $\Pi_9$ with the ninth point in the base locus of plane cubics through $8$ points in general position -- similarly, $\Bs(|-K_{Z_8}|)$ with the point $q=\Bs(|-K_{S_8}|)$ -- and $\Phi\colon Z_9 \to \pr^2$ with the elliptic fibration $\Bl_{q} S_8\to \pr^1$, making $\sigma \colon Z_8\to Z$ a full-fledged \textbf{four-dimensional counterpart} of smooth del Pezzo surfaces of degree $1$.

\subsection{General strategy of the proof and structure of the paper}
The paper is structured as follows.
In Section \ref{pre} we introduce the notation and prove some preliminary results, while in Section \ref{ssetup} we describe the construction of $Z_m$, see Setup \ref{setup},  and recall its geometry in the known cases $m\in\{1,2\}$, see $\S$\ref{z1} and $\S$\ref{z2}.
Moreover, in $\S$\ref{gen_cond} we precisely identify the necessary conditions -- the generality conditions $\boldsymbol{(\star)}$ -- that a smooth cubic fourfold has to satisfy in order for $Z_m$ to be Fano.

Section \ref{fano} is devoted to the proof of Theorems \ref{main:fano} and \ref{main:cone}. First, we show in Proposition \ref{m=3} that $Z_3$ is Fano, and determine equivalent conditions to the generality conditions $\boldsymbol{(\star)}$ in Corollary \ref{co:line-types}.
Then, we prove the following result; it showcases the first correspondence between $Z_3$ and the blow-up $g\colon S_3\to \pr^2$ at $3$ non-collinear points.
\begin{thm}\emph{[Theorem \ref{thm:cremona} and Remark \ref{re:cremona}]}\label{main:cremona}
Let $Z\subset \pr^5$ be a general cubic fourfold containing planes $\Pi_1,\Pi_2,\Pi_3$ pairwise meeting at distinct points. The linear system
$$|H^0(\ma I_{\Pi_1\cup\Pi_2\cup\Pi_3/Z}\otimes\ol_Z(2))|$$
defines a rational map $\zeta\colon Z \dashrightarrow Z'$ onto a smooth cubic fourfold $Z'\subset\pr^5$ whose resolution $\sigma\colon Z_3 \to Z$ is such that $\sigma':=\zeta\circ\sigma\colon Z_3 \to Z'$ is the blow-up along planes $\Pi_1',\,\Pi_2',\,\Pi_3'\subset Z'$ pairwise meeting at distinct points.
\end{thm}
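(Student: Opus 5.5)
We mirror the quadratic Cremona transformation $\pr^2\dashrightarrow\pr^2$ given by conics through three non-collinear points. Work on $Z_3$ with $H=\sigma^*\ol_Z(1)$ and exceptional divisors $E_1,E_2,E_3$. The linear system in the statement pulls back to $|2H-E_1-E_2-E_3|$ on $Z_3$. The plan is to show four things: this system is basepoint free of dimension $5$; it defines a birational morphism $\sigma'\colon Z_3\to Z'\subset\pr^5$ onto a cubic; $\sigma'$ contracts three divisors $E_i'$ onto planes; and $\sigma'$ is the blow-up of those planes. By Proposition \ref{m=3}, $Z_3$ is Fano with no small contractions, and we use its Mori cone throughout.

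\textbf{Step 1 (sections and basepoint freeness).} First compute $h^0(\ma I_{\Pi_1\cup\Pi_2\cup\Pi_3/\pr^5}(2))$. Three planes pairwise meeting at points span $\pr^5$, and we may choose coordinates with $\Pi_i\cap\Pi_j=p_k$. Quadrics containing them form a $6$-dimensional space, for instance by restriction sequences: each plane imposes $6$ conditions, with overlaps along the points $p_k$. Since $Z$ is a cubic, no quadric contains $Z$, so we get a $\pr^5$ of sections. Basepoint freeness on $Z_3$ is checked on the $E_i$ and on the strict transforms of lines meeting several planes. The generality conditions $\boldsymbol{(\star)}$ and Corollary \ref{co:line-types} should guarantee that the only curves with $(2H-\sum E_i)\cdot C=0$ are the following:
\begin{enumerate}[i.]
\item fibres of the exceptional divisors lying over the rulings of a new divisor class, namely $E_i'\sim H-E_j-E_k$, the strict transform of the union of lines meeting $\Pi_j$ and $\Pi_k$ (the analogue of the line through two points);
\item no other curves.
\end{enumerate}
Indeed $2H-\sum E_i=-K_{Z_3}-H$ plus a correction, and we identify it as a face of the nef cone. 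This identifies an extremal face of $\NE(Z_3)$ spanned by three divisorial rays $R_1',R_2',R_3'$, so $2H-\sum E_i$ is nef and, by basepoint-free theorem (Fano), semiample.

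\textbf{Step 2 (image and birationality).} Compute $(2H-\sum E_i)^4$ using the intersection numbers of $H$ and $E_i$ on the blow-up of planes from Section \ref{pre}. We expect the value $3$, together with $(2H-\sum E_i)^3\cdot H=\dots$, and the symmetric formula $H=2H'-\sum E_i'$ with $E_i=H'-E_j'-E_k'$. A degree count then shows that the morphism is birational onto a quartic-degree-$3$ hypersurface $Z'$. Birationality follows since a general fibre is a point: the degree is $3$, which is prime, and $Z'$ is not a linear space of lower degree.

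\textbf{Step 3 (structure of $\sigma'$).} The contraction associated to the face $\langle R_1',R_2',R_3'\rangle$ is composed of three divisorial elementary contractions with exceptional divisors $E_i'$. By Proposition \ref{m=3}, $Z_3$ has no small contractions, and the symmetry of the configuration (Remark \ref{re:cremona}) means they are of the same type as the $\sigma$-contractions: $E_i'\to\Pi_i'\cong\pr^2$ is a $\pr^1$-bundle with $E_i'\cdot R_i'=-1$. Ando/Wiśniewski-type results then say that a smooth divisorial contraction with all fibres of dimension $\le1$ is a blow-up of a smooth codimension-two centre. Hence $Z'$ is smooth, and $\Pi_i'$ are planes because they are images of $E_i'$ of degree $(2H-\sum E)^2\cdot E_i'\cdots=1$. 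The planes $\Pi_i'$ meet pairwise at single points, which are the images of $E_j'\cap E_k'$.

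\textbf{Main obstacle.} The hardest step is the identification of the extremal face in Step 1. Concretely, we must prove that lines meeting two planes sweep out a divisor in $|H-E_j-E_k|$ that is a $\pr^1$-bundle over a plane, and that there are no further $(2H-\sum E_i)$-trivial curves. This is exactly where $\boldsymbol{(\star)}$ (no lines meeting all three planes in special position, etc.) must be used, through Corollary \ref{co:line-types}.
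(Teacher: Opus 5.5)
\textbf{Verdict: there is a genuine gap.} The overall shape is right: work with $H'=2H-E_1-E_2-E_3$, show it is nef and big with $(H')^4=3$, and contract the three divisors $H_{jk}\simeq H-E_j-E_k$ (strict transforms of $D_{jk}\cap Z$). Your degree count in Step 2 is a fair substitute for the paper's identification of $Z'$ via $(\sigma')^*(-K_{Z'})\simeq 3H'$ and the classification of del Pezzo fourfolds. That count uses $h^0(Z_3,H')=6$, a non-degenerate image, and $3=\deg\phi\cdot\deg\phi(Z_3)$. For smoothness, though, you still need to show that the cubic $\phi(Z_3)$ equals the normal target of the contraction. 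But the two steps that carry the real content are not proved.

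\textbf{Nefness and basepoint freeness of $H'$.} Step 1 gives no argument for either. Phrases like ``the generality conditions should guarantee'' and ``we identify it as a face of the nef cone'', and the identity $H'=-K_{Z_3}-H$, do not prove anything. The missing idea is the decomposition $H'\simeq (H-E_i)+H_{jk}$ for $\{i,j,k\}=\{1,2,3\}$. Here $H-E_i$ is globally generated, being the pull-back of the quadric bundle of Lemma \ref{quadric-bundle}, and $H_{jk}$ is the unique member of $|H-E_j-E_k|$. Hence every curve with $H'\cdot C<0$, and every base point of $|H'|$, lies in $H_{12}\cap H_{13}\cap H_{23}$, which is empty by Lemma \ref{H_ij:intersection}. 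The same decomposition, together with $-K_{Z_3}\simeq\sum_i(H-E_i)$ being ample, shows that every $H'$-trivial curve lies in some $H_{jk}$. That is your claim ii.

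\textbf{Structure of the new exceptional divisors.} You name this as the main obstacle and leave it open. Step 3 then appeals to ``the symmetry of the configuration (Remark \ref{re:cremona})'', which is circular because that remark is a consequence of the theorem. Also, Proposition \ref{m=3} only gives that $Z_3$ is Fano, not that it has no small contractions.

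In the paper this input comes from the known geometry of two planes:
\begin{itemize}
\item $F_{jk}\subset Z_{\{j,k\}}$ is the exceptional divisor of the smooth blow-up $\psi_{jk}\colon Z_{\{j,k\}}\to T_{jk}$ onto a Verra fourfold (Remark \ref{H_12}). So it is a $\pr^1$-bundle over $V_{jk}\cong\PP$.
\item $F_{jk}$ meets the strict transform $P_i$ of the third plane exactly along $\bar\ell_i$, which is a $\psi_{jk}$-fibre.
\item Hence $H_{jk}\simeq\pi_{\{j,k\}}^*F_{jk}$ is a $\pr^1$-bundle over $\Bl_{pt}\PP\cong\Bl_{2\,pts}\pr^2$.
\item Its contraction $\varepsilon_{jk}$ is then a smooth blow-up by Wi\'sniewski's result on the two-rays game for $\psi_{jk}\circ\pi_{\{j,k\}}$.
\end{itemize}

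You also need $H_{ij}\cap H_{ik}\cong\PP$ with rulings $f_{ij},f_{ik}$, and $H_{ij}\cap E_k\cong\PP$. These give the intersection numbers behind $H'\cdot f_{jk}=0$, the empty triple intersection, and the fact that the $\Pi_i'$ meet pairwise in single points. They rest on the smoothness of $Q_{\bar\Lambda_i}$ (Lemma \ref{lemma:geometry-3planes}\emph{iii.}) and on the lines $\ell_i$ being of first type (Corollary \ref{co:line-types}). None of this appears in your proposal, so neither Step 1 nor Step 3 is established as written.
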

The rational map $\zeta\colon Z \dashrightarrow Z'$, which we call a \emph{Cremona-like transformation}, mirrors the standard Cremona transformation of $\pr^2$ centred at the coordinate points.
In other words, the exceptional divisors of $\sigma'\colon Z_3 \to Z'$ are the strict transform of the unique hyperplane sections of $Z$ spanned by $\Pi_i\cup\Pi_j$ for $i\neq j$, similarly to the other blow-up $g'\colon S_3 \to \pr^2$ of $S_3$, whose exceptional curves are the strict transform of the unique lines of $\pr^2$ spanned by $2$ of the $3$ blown-up points of $g\colon S_3\to \pr^2$.

Theorem \ref{main:cremona} allows us to prove Theorem \ref{m=8}, the final step towards Theorems \ref{main:fano} and \ref{main:cone}.
We fix an isomorphism between the Picard groups of $S_m$ and $Z_m$ (see Remark \ref{alpha}), and we use this identification to extend the action of the group of Cremona isometries of $S_m$ on $\Pic(S_m)$ (cf.\ \cite{Dolg83, Dolg12}) to $\Pic(Z_m)$.
Doing so, we are able to fully describe the Mori cone $\NE(Z_m)$ and to show that $Z_m$ is Fano for all $m \leq 8$.

However, Section \ref{fano} leaves us with an open question: does a smooth cubic fourfold, containing $m\leq 8$ planes as in Setup \ref{setup} and satisfying the generality conditions $\boldsymbol{(\star)}$, exist? We provide the answer in Section \ref{sec:existence} and prove the following.
\begin{thm}\label{existence}
There exists a smooth cubic fourfold $Z$ containing $m\leq 8$ planes $\Pi_1,\dots,\Pi_m$ as in Setup \ref{setup} and satisfying the generality conditions $\boldsymbol{(\star)}$.
\end{thm}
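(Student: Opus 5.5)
The plan is an explicit construction, and the natural source is the special configuration of planes tangent to a Veronese surface $\ma V\subset\pr^5$ mentioned in the introduction. Let $\nu\colon\pr^2\to\ma V$ be the Veronese embedding. For a point $p\in\pr^2$, let $\Pi_p$ be the tangent plane to $\ma V$ at $\nu(p)$. Two tangent planes $\Pi_p,\Pi_q$ with $p\neq q$ meet in exactly one point: the pole of the line $\overline{pq}$, that is, the point of the secant variety corresponding to the rank-two conic $2\,\overline{pq}$ in the dual picture. Distinct pairs give distinct lines and hence distinct intersection points, provided no three of the $p_i$ are collinear. So choosing $p_1,\dots,p_8\in\pr^2$ in general position yields planes $\Pi_1,\dots,\Pi_8$ pairwise meeting at distinct points, as required in Setup \ref{setup}. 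For $m<8$ I would take a subset of these planes, since the conditions $\boldsymbol{(\star)}$ for fewer planes should follow from those for more, or are checked in the same way.

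\textbf{Step 1: cubics through the planes.} Compute the linear system $|\ma I_{\Pi_1\cup\dots\cup\Pi_8}(3)|$ in terms of $\operatorname{Sym}$-representations of $SL_3$. The secant cubic $\operatorname{Sec}(\ma V)$, the determinant of the symmetric $3\times3$ matrix, contains all tangent planes. Further cubics are obtained as follows. For each plane cubic $C\subset\pr^2$ through the $p_i$, consider the cubic hypersurface $F_C$ obtained by polarizing or contracting the symmetric matrix, namely the cubics vanishing on $\Pi_p$ for all $p\in C$. Counting dimensions, $h^0(\ol_{\pr^5}(3))=56$, and each plane imposes $10$ conditions minus overlaps at the intersection points. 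The expected dimension of the system is therefore positive, and it is large once we exploit the symmetry.

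\textbf{Step 2: smoothness.} By Bertini, a general member is smooth away from the base locus, which should be $\bigcup\Pi_i$ (together with $\Pi_9$, the plane attached to the ninth base point of the pencil of cubics, as the introduction indicates). Smoothness along the planes is then checked by showing that the partial derivatives of members of the system have no common zero on each $\Pi_i$. Since the planes are nonsingular in a general member precisely when the normal-direction derivatives give a base-point-free net of quadrics on $\Pi_i$ modulo the plane, this becomes a finite computation. The $SL_3$ symmetry reduces it to one plane, and a single explicit example, for instance with the $p_i$ over $\Q$ verified by computer algebra, would suffice because smoothness is open.

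\textbf{Step 3: the generality conditions.} Verify $\boldsymbol{(\star)}$, which by Corollary \ref{co:line-types} amounts to excluding certain special lines and special hyperplane or quadric sections through the planes. These are again open conditions on the cubic, so it suffices to exhibit one smooth member satisfying them. I expect this to be the main obstacle, because the Veronese configuration is special: it forces the ninth plane, and possibly other coincidences, which may violate some condition in $\boldsymbol{(\star)}$. If this happens, I would first try to deform the configuration away from the Veronese one within the parameter space of eight planes with a smooth cubic containing them, using openness. Alternatively, I would check directly that the extra plane $\Pi_9$ does not interfere with $\boldsymbol{(\star)}$, since $\Pi_9$ meets the others at points and does not create the forbidden curves.
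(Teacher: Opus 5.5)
Your set-up agrees with the paper's. You take tangent planes to $\ma V$ at general points $[L_1],\dots,[L_m]$, and you get smoothness of a general member of $|H^0(\mathfrak I_{m,3})|$ from a computer check on one rational example, propagated by openness, with $m<8$ handled by taking a subset of the eight planes (this is Lemma \ref{appendix}). The $SL_3$-symmetry you invoke in Step~2 reduces nothing, since eight general points of $\pr^2$ have trivial stabiliser in $PGL_3$, but the computer check makes this harmless.

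The genuine gap is Step~3: you never establish $\boldsymbol{(\star)}$, you only flag it as the main obstacle. \textbf{G0} and \textbf{G1} are conditions on the planes, not on the cubic, so openness in the cubic is beside the point. They follow from Lasker's description $\Pi_i=\{[L_iM]\}$, which gives an isomorphism $\theta_i\colon\pr^2\to\Pi_i$ with $\theta_i([L_j])=p_{ij}$, so $\{p_{ij}\}_{j\neq i}\subset\Pi_i$ is projectively equivalent to $\{[L_j]\}_{j\neq i}$. They also use the fact that a hyperplane containing three tangent planes would pull back under $\nu$ to a conic singular at three non-collinear points, which is impossible.

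The substantial condition is \textbf{G2}: you must show that not every cubic containing $\Pi_1\cup\dots\cup\Pi_m$ contains $\w\Pi_{ijk}$. Openness only reduces this to producing one such cubic, which your proposal does not do. Your fallbacks do not help either: deforming the configuration still needs a point where \textbf{G2} holds, and $\Pi_9$ has nothing to do with the planes $\w\Pi_{ijk}$. The paper fills exactly this hole with Proposition \ref{expected-dim}, using the Horace method: residual sequences with respect to $D_{12}$, then $D_{12}\cap D_{45}$ and a quadric surface in it. This relies on Lemma \ref{lem:planes-tildaV}\emph{ii.}, which is where that proposition really needs the Veronese configuration. It proves $h^0(\w{\mathfrak I}_{8,3})=3<4\le h^0(\mathfrak I_{8,3})$, so a general cubic of the system contains no $\w\Pi_{ijk}$.

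An observation you already made in Step~1 would close the gap more cheaply than the paper does. The secant cubic $\sigma_2(\ma V)=\{\det=0\}$ belongs to $|H^0(\mathfrak I_{m,3})|$, and it contains no $\w\Pi_{ijk}$. Indeed, that plane contains the point $[L_iL_j+L_iL_k+L_jL_k]$, a quadric of rank three because $[L_i],[L_j],[L_k]$ are not collinear. Hence each locus of cubics containing a given $\w\Pi_{ijk}$ is a proper linear subspace of the irreducible system, and its complement meets the nonempty open set of smooth members. The paper's exact count $h^0(\mathfrak I_{m,3})=\delta_m$ is more than existence requires; it is used later, e.g.\ for $h^0(-K_{Z_8})=3$ in Corollary \ref{abelian fibration}. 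As written, though, your proposal leaves \textbf{G2}, and with it the theorem, unproved.
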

An essential tool for the proof of Theorem \ref{existence} is the above-mentioned special configuration of planes tangent to a Veronese surface (see $\S$\ref{veronese}), as well as the Horace method (cf.\ \cite{Horace}), which we use to compute the dimension of the space of global sections of cubic fourfolds containing $m\leq 8$ planes as in Setup \ref{setup}; see Proposition \ref{expected-dim} and Remark \ref{anticanonical sections}.

Finally, in Section \ref{abl} we prove Theorem \ref{thm:base-locus}, while in Section \ref{final} we compute the numerical invariants of $Z_m$, collect them in Table \ref{final table}, and give a comment on the rationality of $Z_m$.

\subsection{Related results and analogous constructions}
In the framework of smooth Fano fourfolds $X$ with large anticanonical base locus, first studied in \cite{HS25,HS25bis}, Theorem \ref{thm:base-locus} shows that $Z_8$ is a smooth Fano fourfold whose anticanonical base locus contains a smooth irreducible surface, provided that the blown-up planes $\Pi_1,\dots,\Pi_8$ are tangent to a Veronese surface: this gives a new example (cf.\ \cite[Problem 1.7]{HS25}).
More precisely, $Z_8$ complements the classification in \cite[Theorem 1.2]{HS25bis}, which assumes $h^0(X,-K_X)\geq 4$, since $h^0(Z_8,-K_{Z_8})=3$ (see Table \ref{final table}).
To the best of our knowledge, $Z_8$ is the first known smooth Fano fourfold with $h^0(X,-K_X)=3$ and a two-dimensional anticanonical base locus.
Moreover, if $\Bs(|-K_{Z_8}|)$ is irreducible, then $Z_8$ satisfies the hypotheses of \cite[Theorem 1.3]{HS25}, hence a general anticanonical divisor of $Z_8$ is not $\Q$-factorial and, in particular, it is singular.

\medskip

Finally, note that families of smooth Fano fourfolds with an analogous construction to the one of Theorem \ref{main:fano} also appear in \cite{Manivel24}, \cite{HS25bis}, and \cite{FanoQuiver26}.

In \cite[Theorem]{Manivel24} Manivel proves that the blow-up of a smooth quadric fourfold along $5$ general planes meeting pairwise at a point is Fano, and it follows that same holds for all $m\leq 5$.
Whether such a blow-up remains Fano for $m\in\{6,7,8\}$ is still an open problem, see \cite[$\S11.6$ and references therein]{Casa25}.

Another example is the Fano fourfold from \cite[Example 5.17]{HS25bis}, which is obtained as the blow-up of a smooth del Pezzo fourfold $X$ of degree $1$, i.e.\ $-K_X\simeq 3A$ and $A^3=1$, along the union of two del Pezzo surfaces of degree $1$ meeting at the basepoint $\Bs(|A|)=\{pt\}$.

Lastly, two more Fano fourfolds that are the blow-up along smooth surfaces pairwise meeting at a point appear in \cite{FanoQuiver26}: they are obtained as moduli spaces of representations of subspace quivers.
The Fano fourfold M$(1^5,2;3)$ coincides with the one from \cite{Manivel24}, see \cite[Theorem 34]{FanoQuiver26}, while the Fano fourfold M$(1^4,2^2;3)$ can be constructed as the blow-up of $\pr^2\times \pr^2$ along four surfaces of type $\{line\}\times \{line\}$, see \cite[Theorem 46]{FanoQuiver26}.
To conclude, note that M$(1^4,2^2;3)$ has the same numerical invariants of the Fano fourfold with $\rho=6$ from \cite[Proposition 7.24]{CS24}: the constructions in \cite{FanoQuiver26} and \cite{CS24} are different, but the two varieties are most likely isomorphic.

\bigskip

\noindent{\bf Acknowledgements.}
I thank Cinzia Casagrande for the many fruitful conversations and for a careful reading of a preliminary version of this paper, and I thank Andreas H\"oring for the many valuable inputs he gave me.

I am also grateful to Stefano Canino for introducing me to the Horace method and  to the properties of the tangent planes to the Veronese surface.

Moreover, I thank Massimiliano Mella for a valuable discussion on Cremona equivalence.

The author is a member of GNSAGA, INdAM.

\section{Preliminaries}\label{pre}
\subsection{Notations}
We refer to \cite{Hart} for general notions on schemes; we work over $\mathbb C$.
Schemes are assumed to be projective, and varieties are integral schemes. 

We refer to \cite{LazI, LazII} for the notions of positivity of divisors and vector bundles, to \cite{Debarre, KM} for the terminology and properties in birational geometry, and to \cite{HK00} for Mori dream spaces and  the Mori chamber decomposition.
Smooth Fano varieties are Mori dream spaces by \cite[Corollary 1.3.2]{BCHM}.

Given linear subspaces $\Lambda_1,\Lambda_2\subset \pr^n$, we donete by $\Lambda_1+\Lambda_2$ the linear subspace generated by $\Lambda_1\cup\Lambda_2$.
Given distinct points $p,q\in \pr^n$, we denote by $\overline{p\, q}$ the line $p+q$.

We denote by $\ell(X)$ the length of a zero-dimensional scheme $X$.

We denote an isomorphism of objects by $\cong$, linear equivalence of divisors by $\simeq$, and numerical equivalence (both of curves and divisors) by $\equiv$.

\medskip

For a normal variety $X$, we denote by $\rho(X)$ the Picard rank of $X$; $\ma N_1(X)$ and $\ma N^1(X)$ are the real vector spaces of classes of $1$-cycles with real coefficients and of $\R$-divisors, respectively, modulo numerical equivalence; $\dim\ma N_1(X)=\dim\ma N^1(X)=\rho(X)$.
We denote classes of $1$-cycles by [-].

A contraction $\ph\colon X \to Y$ is a surjective morphism with connected fibres between normal projective varieties.
We say that $\ph$ is elementary if $\rho(X)-\rho(Y)=1$.

A supporting divisor $D$ of a contraction $\ph\colon X \to Y$ is the pullback $D\simeq\ph^*A$ of some ample Cartier divisor $A$ on $Y$.

If $D$ is a semiample divisor on $X$, then the Stein factorization of the morphism corresponding to the linear system $|nD|$ for $n\gg0$ is a contraction with supporting divisor $D$.

A conic bundle is a contraction such that every fibre is a plane conic; a quadric bundle is a contraction such that every fibre is a quadric surface.

\medskip

Given a convex, pointed cone $\ma C$, an extremal face $F$ of $\ma C$ is a convex subcone of $\ma C$ that satisfies the following: for every $v,w \in\ma C$ such that  $v+w \in F$, then $v,w \in F$.
An extremal ray of $\ma C$ is a one-dimensional extremal face; a facet of $\ma C$ is an extremal face of codimension one.

\medskip

Let now $X$ be a (normal, $\Q$-factorial) Mori dream space: the Mori cone $\NE(X)$, the nef cone $\Nef(X)$ and the pseudo-effective cone $\Eff(X)$ are closed, rational and polyhedral.
We have $\Nef(X)\subseteq\Eff(X)$. The cones $\NE(X)$ and $\Nef(X)$ are dual to each other via the intersection form.
If $X$ is a surface, then $\ma N^1(X)=\ma N_1(X)$ and $\Eff(X)=\NE(X)$.

Given a contraction $\ph\colon X \to Y$, we denote by $\NE(\ph)$ the extremal face of $\NE(X)$ generated by the curves contracted by $\ph$.
If $D$ is a supporting divisor of $\ph$, we have $\NE(\ph)=\NE(X)\cap D^{\perp}$, where $D^{\perp}\subset\ma N_1(X)$ is the hyperplane of $1$-cycles $\Gamma$ such that $D\cdot\Gamma=0$.

We have $\dim\NE(\ph)=\rho(X)-\rho(Y)$: $\ph$ is elementary if and only if $\NE(\ph)$ is an extremal ray.

\subsection{Local computations}
\begin{lemma}\label{local blowup (n-2)-planes}
Let $\Pi_1,\, \Pi_2\subset \af^n$ be general $(n-2)$-planes. The blow-up $X \xrightarrow{\sigma} \af^n$ along $\Pi_1\cup \Pi_2$ is isomorphic to
$$X_2 \xrightarrow{\sigma_2} X_1 \xrightarrow{\sigma_1} \af^n$$
where $X_1 \xrightarrow{\sigma_1} \af^n$ is the blow-up along $\Pi_1$ and $X_2 \xrightarrow{\sigma_2} X_1$ is the blow-up along the strict transform of $\Pi_2$.
\end{lemma}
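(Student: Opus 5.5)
The plan is to reduce to a computation in coordinates and then use a general fact about blowing up a product of ideals. Since $\Pi_1,\Pi_2$ are general $(n-2)$-planes, they meet transversally, that is $T\Pi_1+T\Pi_2=\af^n$, in an $(n-4)$-plane (empty if $n<4$; in that case the statement is trivial because the blow-ups are along disjoint centres). After an affine change of coordinates I can therefore assume
$$\Pi_1=\{x_1=x_2=0\},\qquad \Pi_2=\{x_3=x_4=0\}.$$
This gives a decomposition $\af^n=\af^2_{x_1,x_2}\times\af^2_{x_3,x_4}\times\af^{n-4}$. Put $I=(x_1,x_2)$ and $J=(x_3,x_4)$. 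Since $I$ and $J$ are generated by disjoint sets of variables, $I\cap J=IJ=(x_1x_3,x_1x_4,x_2x_3,x_2x_4)$. Hence $\sigma$ is the blow-up of the ideal $IJ$.

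\emph{Step 1: the general fact $\Bl_{IJ}\af^n\cong \Bl_{J\cdot\ol_{X_1}}X_1$, where $X_1=\Bl_I\af^n$.} I would prove this with the universal property of blowing up. On $X_2:=\Bl_{J\ol_{X_1}}X_1$ the ideal $I\ol_{X_2}$ is invertible, being the pullback of the exceptional divisor of $\sigma_1$, and $J\ol_{X_2}$ is invertible by construction. So $IJ\ol_{X_2}$ is invertible, and $X_2\to\af^n$ factors through $\Bl_{IJ}\af^n$. Conversely, on $X:=\Bl_{IJ}\af^n$ the ideal $IJ\ol_X$ is invertible. On an integral noetherian scheme, if a product of two ideal sheaves is invertible then each factor is invertible, since a product of ideals that is locally principal and nonzero forces each ideal to be locally principal. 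Thus $I\ol_X$ is invertible, which gives $X\to X_1$. Then $J\ol_X$ is invertible, which gives $X\to X_2$. The two morphisms are inverse to each other over $\af^n$ because they are birational and separated. I expect this step, and in particular the invertibility of the factors, to be the only non-formal point. If needed, I would check it directly on the affine charts of $\Bl_{IJ}$, where $IJ$ is generated by one of the monomials $x_ix_j$.

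\emph{Step 2: identify $J\ol_{X_1}$ with the ideal of the strict transform of $\Pi_2$.} By the product decomposition, $X_1=\Bl_0\af^2_{x_1,x_2}\times\af^2_{x_3,x_4}\times\af^{n-4}$. The functions $x_3,x_4$ are coordinates pulled back from the second factor, so $J\ol_{X_1}$ is the ideal of the smooth subvariety
$$\Bl_0\af^2\times\{0\}\times\af^{n-4}.$$
This subvariety is irreducible and maps isomorphically onto $\Bl_{\Pi_1\cap\Pi_2}\Pi_2$. It is therefore the strict transform $\widetilde\Pi_2$, and in fact it coincides with the total transform $\sigma_1^{-1}(\Pi_2)$ scheme-theoretically.

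Combining Steps 1 and 2, $X_2=\Bl_{\widetilde\Pi_2}X_1\cong\Bl_{IJ}\af^n=X$ over $\af^n$, which is the statement.
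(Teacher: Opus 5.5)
Your proof is correct, but it identifies the two blow-ups by a different route from the paper. Both arguments start from the normalization $\Pi_1=V(x_1,x_2)$, $\Pi_2=V(x_3,x_4)$ and use the fact that the strict transform of $\Pi_2$ in $X_1$ is cut out by $x_3,x_4$.

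The paper then computes explicitly. It writes $X_1=V(tx_1-sx_2)\subset\af^n\times\pr^1$ and $X_2=V(tx_1-sx_2,\,\beta x_3-\alpha x_4)\subset\af^n\times\pr^1\times\pr^1$, and embeds this in $\af^n\times\pr^3$ by the Segre map. It then asserts that the resulting ideal (the Segre quadric plus four bilinear relations) is exactly the kernel of the surjection from $R[a_0,\dots,a_3]$ onto the Rees algebra of $IJ$ (in your notation), sending $a_0,\dots,a_3$ to $x_1x_3,x_1x_4,x_2x_3,x_2x_4$. Hence $X$ and $X_2$ are $\Proj$ of the same graded ring.

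You replace this presentation computation by the general identity $\Bl_{IJ}\af^n\cong\Bl_{J\ol_{X_1}}X_1$. You get it from the universal property together with the fact that invertibility of $IJ$ forces invertibility of $I$ and $J$. That fact is sound: locally, if $IJ=(f)$ with $f$ a nonzerodivisor and $f=\sum a_ib_i$, then some $a_ib_i$ is a unit multiple of $f$, and then $I=(a_i)$.

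What each route buys:
\begin{itemize}
\item The paper's route produces explicit equations of $X$ inside $\af^n\times\pr^1\times\pr^1$. However, its decisive step, that the Rees algebra has no relations beyond those five, is only asserted, and checking it is the real algebraic content of that approach.
\item Your route avoids that computation entirely and holds for arbitrary ideals. Transversality enters only through $I\cap J=IJ$ and the identification of $J\ol_{X_1}$ with the ideal of $\widetilde\Pi_2$.
\item Those two inputs are statements about ideal sheaves that can be checked locally. So your argument also applies directly to the global situation of Corollary~\ref{blowup-planes}, without first passing through an affine model.
\end{itemize}
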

\begin{proof}
We have $\Pi_1\cap\Pi_2=\emptyset$ when $n=2,3$ and the statement is clear. Otherwise, let us assume $n\geq4$ and let $\bar x:=(x_1,\dots,x_n)$ be the coordinates of $\mathbb A^n$. We can assume that $\Pi_1=V(x_1,\, x_2)$ and $\Pi_2=V(x_3,\, x_4)$, so that the ideal of $\Pi_1\cup \Pi_2$ is $I:=(x_1x_3,\, x_1x_4,\, x_2x_3,\, x_2x_4)$. If we denote the ring $\C[x_1,\dots,x_n]$ by $R$, then $X:=\Proj (A)$ is given by the Rees algebra
$$A:=R \oplus I \oplus I^2 \oplus \dots\ .$$
We construct an embedding $X_2 \subset \af^n\times\pr^3$ so that $X_2=\Proj (R[a_0, \dots, a_3]\slash J)$ for some ideal $J$, and show that $A\cong R[a_0, \dots, a_3]\slash J$.

The ideal of $\Pi_1$ in $\af^n$ is generated by the regular sequence $\{x_1,x_2\}\subset R$, so $X_1\subset \af^n\times\pr^1$ with coordinates $\bar x\times(s:t)$ is given by $X_1=V(tx_1-sx_2)$. The ideal of the strict transform of $\Pi_2$ in $X_1$ is generated by the regular sequence $\{x_3,x_4\}\subset R[s]\slash (x_1-sx_2)$ (resp.\ $R[t]\slash (tx_1-x_2)$) in the affine chart $\{t\neq0\}\subset \af^n\times \af^1$ (resp.\ in $\{s\neq0\}$). Thus, $X_2\subset \af^n\times\pr^1\times\pr^1$ with coordinates $\bar x\times(s:t)\times(\alpha:\beta)$ is given by $$X_2=V(tx_1-sx_2, \, \beta x_3-\alpha x_4).$$
Through the Segre embedding to $\pr^3\ni(a_0:a_1:a_2:a_3)$
$$\pr^1\times\pr^1 \hookrightarrow \pr^3, \quad (s:t)\times(\alpha:\beta) \mapsto(s\alpha:s\beta:t\alpha:t\beta) \ ,$$
we see that $X_2 \subset \af^n\times \pr^3$ is given by the ideal $$J=(a_0a_3-a_1a_2,\ a_2x_1-a_0x_2,\ a_3x_1-a_1x_2,\ a_1x_3-a_0x_4,\ a_3x_3-a_2x_4).$$
Finally, to show that $X\cong X_2$, notice that $J$ is the kernel of the surjective morphism
$$R[a_0, \dots, a_3] \to A, \quad a_0\mapsto x_1x_3, \ a_1\mapsto x_1x_4,\ a_2\mapsto x_2x_3,\ a_3\mapsto x_2x_4.$$
\end{proof}

\begin{corollary}\label{blowup-planes}
Let $Z$ be a smooth projective fourfold, and let $Y_1,\, Y_2\subset Z$ be smooth surfaces meeting transversally at a point. Denote by $X_1\xrightarrow{\pi_1}Z$ and $X_2\xrightarrow{\pi_2}Z$ the blow-ups of $Z$ along $Y_1$ and $Y_2$, respectively.
Then,
$$X:=\Bl_{Y_1\cup Y_2}Z \cong \Bl_{\pi_1^{-1}(Y_2)}X_1 \cong\Bl_{\pi_2^{-1}(Y_1)}X_2.$$
In particular, $X$ is smooth.
\end{corollary}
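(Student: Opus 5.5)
The plan is to reduce the statement to Lemma \ref{local blowup (n-2)-planes} with $n=4$, using that blow-ups are local over the base and compatible with étale (or analytic) base change. Away from the point $p:=Y_1\cap Y_2$ the three varieties agree, and the verification there is immediate. On $Z\setminus\{p\}$ the scheme $Y_1\cup Y_2$ is the disjoint union of the two smooth surfaces $Y_1\setminus\{p\}$ and $Y_2\setminus\{p\}$. Blowing up a disjoint union is the same as blowing up each component in turn. Moreover, on $X_1$ away from $\pi_1^{-1}(p)$, the scheme $\pi_1^{-1}(Y_2)$ is just a copy of $Y_2\setminus\{p\}$, since $\pi_1$ is an isomorphism off $Y_1$. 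So all three blow-ups are canonically isomorphic over $Z\setminus\{p\}$. The same holds symmetrically for $X_2$.

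Near $p$ I would choose local coordinates, or work in the completed local ring or an étale chart $U\to\af^4$, in which $Y_1=V(x_1,x_2)$ and $Y_2=V(x_3,x_4)$. This is possible precisely because $Y_1$ and $Y_2$ are smooth surfaces meeting transversally at $p$. Indeed, $T_pY_1\oplus T_pY_2=T_pZ$, so if $x_1,x_2$ cut out $Y_1$ and $x_3,x_4$ cut out $Y_2$ near $p$, their differentials are linearly independent. Hence $(x_1,\dots,x_4)$ is a system of parameters giving an étale map $U\to\af^4$ under which $Y_i$ is the preimage of $\Pi_i$, exactly as in the lemma.

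Next I would check that the ideals are pulled back correctly. The ideal of $Y_1\cup Y_2$ on $U$ is $(x_1,x_2)\cap(x_3,x_4)=(x_1x_3,x_1x_4,x_2x_3,x_2x_4)$, since intersection of ideals commutes with flat base change. Also, $\pi_1^{-1}(Y_2)$ denotes the strict transform, which here coincides with the scheme-theoretic preimage minus the exceptional part. Its ideal is generated by $x_3,x_4$ on $X_1$, as computed in the proof of the lemma. Since blow-up commutes with flat base change, the isomorphism $A\cong R[a_0,\dots,a_3]/J$ of the lemma pulls back to an isomorphism $\Bl_{Y_1\cup Y_2}U\cong\Bl_{\widetilde{Y_2}}\Bl_{Y_1}U$ over $U$. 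This isomorphism is compatible with the one over $U\setminus\{p\}$, because both are the canonical identification of Proj's of the same Rees algebras. Gluing then gives $X\cong\Bl_{\pi_1^{-1}(Y_2)}X_1$ globally. Exchanging the roles of $Y_1$ and $Y_2$ (the ideal $I$ is symmetric) gives the second isomorphism.

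For smoothness, $X_1$ is smooth as the blow-up of a smooth variety along a smooth surface. The strict transform of $Y_2$ in $X_1$ is smooth, being isomorphic to $\Bl_pY_2$ by transversality; locally this is the regular sequence $\{x_3,x_4\}$ from the lemma. Hence $\Bl_{\pi_1^{-1}(Y_2)}X_1$ is smooth. The main obstacle, though modest, is the canonicity and gluing step. One must make sure that the isomorphism of the lemma is the natural one induced by the map of Rees algebras, not an ad hoc one, so that it glues. It is also cleanest to phrase the local step via étale or formal coordinates, since algebraic coordinates making $Y_i$ linear need not exist Zariski-locally.
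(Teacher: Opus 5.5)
Your proposal is correct and follows the paper's proof, which simply deduces the statement from Lemma \ref{local blowup (n-2)-planes} with $n=4$. Your étale-coordinate reduction, flat base change, gluing and smoothness check make explicit what the paper leaves implicit; one small correction is that no ``exceptional part'' needs removing. By transversality the fibre $\pi_1^{-1}(p)$ already lies in the strict transform of $Y_2$, so the scheme-theoretic preimage $\pi_1^{-1}(Y_2)$, with ideal $(x_3,x_4)\ol_{X_1}$, is exactly that strict transform.
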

\begin{proof}
It follows directly from Lemma \ref{local blowup (n-2)-planes}, since two general planes in $\af^4$ meet along a point.
See also \cite[Lemma 2.10]{Casa25} for another proof.
\end{proof}

\subsection{Exact sequences of ideal sheaves}\label{ses}
Let $X,Y\subset W$ be projective schemes. We recall two classical short exact sequences of ideal sheaves, that is
\begin{equation}\label{union}
 0\to\ma I_{X\cap Y/Y}\to \ol_{X\cup Y}\to \ol_X\to0
\end{equation}
and
\begin{equation}\label{union2}
 0\to\ma I_{X\cup Y/W}\to \ma I_{X/W}\to\ma I_{X\cap Y/Y} \to0.
\end{equation}
If we locally set $W=\Spec(R)$, $X=\Spec(R/I_X)$ and $Y=\Spec(R/I_Y)$, then \eqref{union} and \eqref{union2} correspond to
\begin{equation*}
 0\to(I_X+I_Y)\cdot R/I_Y\to R/(I_X\cap I_Y)\to R/I_X\to0
\end{equation*}
and
\begin{equation*}
 0\to I_X\cap I_Y \to I_X\to (I_X+I_Y)\cdot R/I_Y \to0.
\end{equation*}

Moreover, suppose that $W$ is integral and let $D$ be a prime divisor on $W$. The following exact sequence, a variation of \eqref{union2}, is known as the \emph{residual exact sequence}
\begin{equation}\label{horace}
 0\to\ma I_{{\rm Res}_D(X)/W}\otimes\ol_W(-D)\to \ma I_{X/W}\to \ma I_{X\cap D/D}\to0,
\end{equation}
where ${\rm Res}_D(X)\subset W$ is the \emph{residual} of $X$ with respect to $D$, that is the scheme defined by the conductor ideal sheaf $\ma I_{{\rm Res}_D(X)/W}:=(\ma I_{X/W}\colon \ma I_{D/W})$.
The residual first appeared in \cite[$\S21$]{Castelnuovo} and was then formalised in \cite[$\S4.1$]{Horace}, where Hirschowitz introduced the so-called \emph{Horace method}, an inductive method which uses \eqref{horace}.
Locally, \eqref{horace} corresponds to
\begin{equation*}
 0\to\bigl(I_X\colon(f)\bigr)\cdot (f)\to I_X\to \bigl(I_X+(f)\bigr)\cdot R/(f)\to0,
\end{equation*}
where we set $D=\Spec(R/(f))$.
If $X$ is reduced, then ${\rm Res}_D(X)$ is simply the Zariski closure of $X \setminus (X \cap D)$; if $X$ is zero-dimensional, then $\ell(X)=\ell(X\cap D) +\ell({\rm Res}_D(X))$.

\subsection{Auxiliary results}
\begin{lemma}\label{ampleness}
Let $X$ be a smooth projective variety. If $-K_X$ is semiample and strictly nef, then $X$ is Fano.
\end{lemma}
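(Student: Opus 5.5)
Since $-K_X$ is semiample, some multiple $|{-nK_X}|$ with $n\geq1$ is basepoint free. I will use it to define a morphism and then take the Stein factorization
$$\ph\colon X\to Y,\qquad -nK_X\simeq \ph^*A,$$
where $A$ is an ample Cartier divisor on the normal projective variety $Y$ (as recalled in the Notations). My plan is to show that $\ph$ is finite. Then $-nK_X$ is the pullback of an ample divisor under a finite morphism, hence ample, and $X$ is Fano.

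To prove finiteness, suppose some fibre of $\ph$ has positive dimension. Then it contains an irreducible curve $C$ with $\ph(C)$ a point. By the projection formula,
$$-nK_X\cdot C=\ph^*A\cdot C=A\cdot \ph_*C=0,$$
since $\ph_*C=0$. This contradicts strict nefness of $-K_X$, which says $-K_X\cdot C>0$ for every curve $C$. So all fibres are finite. Since $\ph$ is proper with finite fibres, it is finite.

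The only point needing care is the standard fact that the pullback of an ample line bundle under a finite morphism is ample. It follows, for instance, from Nakai–Moishezon: for every subvariety $V\subseteq X$ of dimension $k$, the map $\ph|_V$ is finite onto $\ph(V)$, so
$$(\ph^*A)^k\cdot V=\deg(\ph|_V)\,A^k\cdot\ph(V)>0.$$
Alternatively, one can cite \cite{LazI}. Because of the Stein factorization, $\ph$ is in fact birational onto its image, but this is not needed. I do not expect any real obstacle; the whole argument is this short contraction argument.
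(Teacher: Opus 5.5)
Your proof is correct and follows essentially the same route as the paper: the morphism defined by $|{-nK_X}|$ (whether or not you Stein-factorize it) contracts no curve because $-K_X$ is strictly nef, so it is finite, and $-nK_X$ is then ample as the pullback of an ample divisor under a finite morphism. You simply spell out the finiteness argument and the Nakai--Moishezon step, which the paper's one-line proof leaves implicit.
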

\begin{proof}
For $n\gg0$, $-nK_X$ is linearly equivalent to the pullback of an ample Cartier divisor via the finite morphism induced by $|-nK_X|$. Thus, $-K_X$ is ample.
\end{proof}

The following is a standard computation; we provide a proof for the reader's convenience.
\begin{lemma}\label{conti}
Let $\sigma\colon X \to Z$ be the blow-up of a smooth projective fourfold $Z$ along a smooth surface $S$, and let $E:=\Exc(\sigma)$. Then, 
$$(\sigma^*D-E)^4=D^4-6(D_{|S})^2-4D_{|S}\cdot c_1(\ma N_{S/Z}^*) +c_2(\ma N_{S/Z}^*)-c_1(\ma N_{S/Z}^*)^2$$
for all $D\in \Pic (Z)$.
\end{lemma}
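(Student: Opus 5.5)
We compute $(\sigma^*D-E)^4$ by expanding binomially and evaluating each monomial $\sigma^*D^{4-k}\cdot E^{k}$ on $E$. The exceptional divisor is the projective bundle $\pi\colon E=\pr(\ma N_{S/Z})\to S$, with $E_{|E}=\ol_E(-1)=:-\xi$. Here we use the convention that $\xi=c_1(\ol_{\pr(\ma N)}(1))$ is the tautological class of the projectivization of lines in $\ma N:=\ma N_{S/Z}$, so that $\xi$ is the tautological quotient class of $\pr(\ma N^*)$ in Grothendieck's convention. In the expansion
$$(\sigma^*D-E)^4=\sum_{k=0}^4\binom4k(-1)^k\,\sigma^*D^{4-k}\cdot E^k,$$
the term $k=0$ gives $D^4$. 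For $k\ge1$ we write $\sigma^*D^{4-k}\cdot E^k=\int_E \pi^*(D_{|S})^{4-k}\cdot(-\xi)^{k-1}$.

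The terms are then evaluated one at a time, since $E$ is a $\pr^1$-bundle over the surface $S$ and $\pi_*\xi^j$ is known.
\begin{itemize}
\item For $k=1$, the integrand $\pi^*(D_{|S})^3$ vanishes, since $\dim S=2$.
\item For $k=2$, we get $-\int_E\pi^*(D_{|S})^2\,\xi=-(D_{|S})^2$, using $\pi_*\xi=1$. With coefficient $\binom42=6$ this contributes $-6(D_{|S})^2$.
\item For $k=3$ and $k=4$ we use the relation $\xi^2+\pi^*c_1(\ma N^*)\,\xi+\pi^*c_2(\ma N^*)=0$. Equivalently, $\xi^2-\pi^*c_1(\ma N)\xi+\pi^*c_2(\ma N)=0$ holds on $\pr(\ma N)$ with $\ol(-1)\subset\pi^*\ma N$. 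This gives $\pi_*\xi^2=-c_1(\ma N^*)$ and $\pi_*\xi^3=c_1(\ma N^*)^2-c_2(\ma N^*)$.
\item The term $k=3$ is then $\int_E\pi^*D_{|S}\,\xi^2=-D_{|S}\cdot c_1(\ma N^*)$. With coefficient $-4$ this contributes $-4D_{|S}\cdot c_1(\ma N^*)$.
\item The term $k=4$ is $\int_E(-\xi)^3=-(c_1(\ma N^*)^2-c_2(\ma N^*))$, with coefficient $+1$.
\end{itemize}
Summing the five terms yields exactly the stated formula.

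The main care point is the sign convention for $\pr(\ma N)$ and the Segre classes. I would double-check it on a known example: the blow-up of a point-free case such as a plane in $\pr^4$, where $(\sigma^*H-E)^4$ must equal $0$ because $|H-E|$ defines a projection to $\pr^1$. There $D=H$ and $D_{|S}^2=1$. The normal bundle is $\ma N=\ol(1)^{2}$, so $c_1(\ma N^*)=-2\ell$ and $c_2(\ma N^*)=\ell^2$, where $\ell$ is the line class on $S$. The formula gives $1-6+8+1-4=0$, consistent. This check fixes the signs, and the rest is routine.
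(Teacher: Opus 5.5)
Your strategy is the paper's: expand binomially and evaluate each $(\sigma^*D)^{4-k}\cdot E^k$ on $E$ using the Chern relation for the tautological class. The final formula is right, but the $k=3$ term is reached only through two sign errors that cancel.

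\emph{First error: the Chern relation.} The relation you state, $\xi^2+\pi^*c_1(\mathcal N^*)\xi+\pi^*c_2(\mathcal N^*)=0$, is false on $E=\pr(\mathcal N)$ (lines, with $\ol(-1)\subset\pi^*\mathcal N$). Apply Whitney's formula to $0\to\ol(-1)\to\pi^*\mathcal N\to Q\to0$. This gives $c(\pi^*\mathcal N)=(1-\xi)(1+c_1(Q))$, hence $\xi^2+\pi^*c_1(\mathcal N)\xi+\pi^*c_2(\mathcal N)=0$. Equivalently $\xi^2=\xi\cdot\pi^*c_1(\mathcal N^*)-\pi^*c_2(\mathcal N^*)$, which is exactly the relation the paper uses. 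Your ``equivalent'' form $\xi^2-\pi^*c_1(\mathcal N)\xi+\pi^*c_2(\mathcal N)=0$ holds on the projectivization of rank-one quotients of $\mathcal N$, not of lines. Consequently $\pi_*\xi^2=+c_1(\mathcal N^*)$, not $-c_1(\mathcal N^*)$. For a plane in $\pr^4$, for instance, $E\cong\pr^2\times\pr^1$ with $\xi=-h_1+h_2$, so $\pi_*\xi^2=-2\ell=c_1(\mathcal N^*)$. Your value $\pi_*\xi^3=c_1(\mathcal N^*)^2-c_2(\mathcal N^*)$ is correct only because it is even in $c_1$.

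\emph{Second error: the $k=3$ arithmetic.} With your value $\int_E\pi^*D_{|S}\,\xi^2=-D_{|S}\cdot c_1(\mathcal N^*)$ and coefficient $-4$, the contribution is $+4D_{|S}\cdot c_1(\mathcal N^*)$, not $-4D_{|S}\cdot c_1(\mathcal N^*)$ as you wrote. This slip exactly undoes the wrong sign in the relation.

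Your sanity check on a plane in $\pr^4$ tests only the final formula, so it cannot catch this. Run consistently with your relation, it would give $1-6-8+1-4=-16\neq0$.

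\emph{Fix.} Use the correct relation, so that $(\sigma^*D)\cdot E^3=\int_E\pi^*D_{|S}\,\xi^2=D_{|S}\cdot c_1(\mathcal N^*)$, as in the paper's item iii. With coefficient $-4$ this gives the stated term. The terms for $k=1,2,4$ are fine as you have them.
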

\begin{proof}
We have $(\sigma^*D)^4=D^4$, so it suffices to compute $(\sigma^*D)^i\cdot E^{4-i}$ for $i\in\{0,\dots, 3\}$. Note that $\xi_E:=-E_{|E}$ is the tautological class of $E\cong\pr_S(\ma N_{S/Z}^*)$, so by \cite[Appendix A]{Hart} we have $$\xi_E^2=\xi_E\cdot (\sigma_{|E})^*c_1(\ma N_{S/Z}^*)-(\sigma_{|E})^*c_2(\ma N_{S/Z}^*).$$
Therefore,
\begin{enumerate}[$i.$]\setlength\itemsep{4pt}
\item $(\sigma^*D)^3\cdot E=(\sigma_{|E})^*(D_{|S})^3=0$;
\item $(\sigma^*D)^2\cdot E^2=-\xi_E\cdot(\sigma_{|E})^*(D_{|S})^2=-(D_{|S})^2$;
\item $(\sigma^*D)\cdot E^3 =\xi_E^2\cdot(\sigma_{|E})^*(D_{|S})=D_{|S}\cdot c_1(\ma N_{S/Z}^*)$;
\item $E^4=-\xi_E^3=\dots=c_2(\ma N_{S/Z}^*)-c_1(\ma N_{S/Z}^*)^2$.
\end{enumerate}
\end{proof}

For the following lemma, we refer to the notation in \cite[$\S6.3$]{Debarre}.
\begin{lemma}\label{lem:cones}
Let $\ma C'\subseteq \ma C\subset \R^n$ be pointed convex cones of dimension $n$, with $\ma C'$ closed and polyhedral.
Assume that every facet of $\ma C'$ is contained in a facet of $\ma C$. Then, $\ma C'=\ma C$.
\end{lemma}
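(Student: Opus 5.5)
The strategy is to argue by contradiction and use duality of cones. Suppose $\ma C'\subsetneq\ma C$. Since $\ma C'$ is closed, polyhedral, and of full dimension $n$, it is the intersection of finitely many closed half-spaces, one for each facet. Concretely, for each facet $F$ of $\ma C'$ there is a linear form $\lambda_F$ with $\lambda_F\geq0$ on $\ma C'$ and $F=\ma C'\cap\lambda_F^{\perp}$. Moreover,
$$\ma C'=\bigcap_F\{\lambda_F\geq 0\}.$$
This is the standard description of a full-dimensional polyhedral cone by its facets, as in \cite[$\S6.3$]{Debarre]}.

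The key step is to show that each $\lambda_F$ is also nonnegative on $\ma C$. By hypothesis $F$ is contained in a facet $G$ of $\ma C$. Since $\ma C$ is convex, $G$ lies in a supporting hyperplane $\mu^{\perp}$ with $\mu\geq0$ on $\ma C$. Here I would take $\mu$ to be a supporting functional of $\ma C$ vanishing on $G$; one exists because $G$ is a proper face of a convex cone, by the supporting hyperplane theorem applied to a relative interior point of $G$.

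Next, $F$ has dimension $n-1$ and lies in $\mu^{\perp}$, so its span is exactly the hyperplane $\mu^{\perp}$. It is also the hyperplane $\lambda_F^{\perp}$. Hence $\mu=c\lambda_F$ for some $c\neq0$.

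To fix the sign of $c$, pick $v$ in the interior of $\ma C'$. Then $\lambda_F(v)>0$, and $\mu(v)\geq0$ because $v\in\ma C'\subseteq\ma C$. If $\mu(v)$ were $0$, an interior point of $\ma C$ would lie on a supporting hyperplane, which is impossible. So $\mu(v)>0$, giving $c>0$, and therefore $\lambda_F\geq0$ on $\ma C$.

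It follows that $\ma C\subseteq\bigcap_F\{\lambda_F\geq0\}=\ma C'$, and hence $\ma C=\ma C'$.

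The main obstacle is the notion of "facet" for a possibly non-closed, non-polyhedral $\ma C$. I need a facet $G$ of $\ma C$ to be contained in a supporting hyperplane, not merely to be an extremal face of dimension $n-1$. I would handle this as follows. An extremal face $G$ of dimension $n-1$ is disjoint from the interior of $\ma C$, since extremal faces other than $\ma C$ itself avoid the interior. Its relative interior is therefore a convex set disjoint from the open convex set $\operatorname{int}\ma C$. Hahn–Banach separation then gives $\mu\neq0$ with $\mu\geq0$ on $\ma C$ and $\mu\leq0$ on $G$, hence $\mu=0$ on $G$. This is all that is used above.
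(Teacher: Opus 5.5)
Your proof is correct and follows the same route as the paper: write $\mathcal C'$ as the intersection of the half-spaces $\{\lambda_F\geq 0\}$ over its facets $F$, show that each $\lambda_F$ is nonnegative on $\mathcal C$, and conclude $\mathcal C\subseteq\mathcal C'$. The paper only asserts that $\lambda_F\in\mathcal C^*$ ``by hypothesis'', and your supporting-hyperplane argument supplies the justification for that step: the facet $G\supseteq F$ avoids $\operatorname{int}\mathcal C$, so some $\mu\geq 0$ on $\mathcal C$ vanishes on $G$, then $\mu^\perp=\langle F\rangle=\lambda_F^\perp$, and the sign is fixed by an interior point of $\mathcal C'$ (the opening ``by contradiction'' is never used and can be dropped).
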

\begin{proof}
Let $F\subseteq \ma C'$ be a facet, so that $F=(\ph_F)^{\perp}\cap \ma C'$ for some $\ph_F\in (\ma C')^*$ vanishing along $F$. Then, $\ma C'$ is the set of vectors $v\in \R^n$ such that $\ph_F(v)\geq 0$ for all (finitely many) facets $F$ of $\ma C'$.
Since $\ph_F\in \ma C^*$ by hypothesis, we deduce that $\ma C\subseteq \ma C'$.
\end{proof}

\section{Setup and generality conditions}\label{ssetup}
\begin{setup}\label{setup}
Let $Z\subset \pr^5$ be a smooth cubic fourfold containing $m \leq 8$ planes $\Pi_1, \dots, \Pi_m$ pairwise meeting at distinct points $p_{ij}:=\Pi_i\cap \Pi_j$, and let $\sigma\colon Z_m\to Z$ be the blow-up along $\Pi_1\cup \dots\cup \Pi_m$.

By Corollary \ref{blowup-planes}, $Z_m$ is smooth and can be realised as the blow-up of $Z$ along $\Pi_1$, and subsequently along the strict transform of the remaining planes, one by one: the centre of each elementary blow-up is a smooth surface.
Since the order of the elementary blow-ups does not matter, $\sigma\colon Z_m\to Z$ admits $m!$ distinct factorizations
\begin{equation}\label{m!}
Z_m\to \dots \to Z_I \to \dots \to Z
\end{equation}
where $I\subseteq\{1,\dots,m\}$, $\sigma_I\colon Z_I\to Z$ is the blow-up along $\bigcup_{i\in I}\Pi_i$, and $\pi_I\colon Z_m \to Z_I$ is the blow-up along the strict transform of the remaining planes.
Set $Z_i:=Z_{\{1,\dots,i\}}$ for $1\leq i\leq m$.

For $1\leq i\leq m$, let $\mu_i\colon P_i\to \Pi_i$ be the blow-up along the points $\{p_{ij}\}_{j\neq i}$, i.e.\ the blow-up of $\pr^2$ at $m-1$ points.
The exceptional locus $\Exc(\sigma)$ is the union of $m$ exceptional divisors $E_i:=\sigma^{-1}(\Pi_i)$, each isomorphic to $\pr_{P_i}\bigl(\mu_i^*\ma N^*_{\Pi_i\slash Z}\bigr)$ and with tautological class $\ol_{E_i}(1)\simeq -{E_i}_{|E_i}$.
In fact, if we factor $\sigma\colon Z_m \to Z$ as
$$Z_m \xrightarrow{\sigma_m} Z_{m-1} \to \dots \to Z_1 \xrightarrow{\sigma_1} Z$$
with ${\sigma_i}$ the blow-up along the strict transform of $\Pi_i$, the strict transform in $Z_2$ of $\Exc({\sigma_1})$ is linearly equivalent to $\sigma_2^*\Exc({\sigma_1})$: this holds because $\Exc({\sigma_1})$ intersects the strict transform of $\Pi_2$, i.e.\ the blow-up of $\Pi_2$ at $p_{12}$, along its exceptional line which is also a fibre of $\Exc({\sigma_1})\to \Pi_1$, and similarly for all intermediate blow-ups and exceptional divisors.

It follows that
\begin{equation}\label{anticanonical}
-K_{Z_m} \simeq 3H- (E_1+\dots+ E_m)
\end{equation}
where $H:=\sigma^*\ol_Z(1)$.

Denote by $f_i$ the class of a fibre of $E_i \to P_i$. Then, $\sigma_{|E_i}\colon E_i \to \Pi_i$ is a fibration with general fibre $f_i\cong \pr^1$ and special fibres $E_i \cap E_j=\sigma^{-1}(p_{ij})\cong\pr^1\times \pr^1$ with rulings generated by $f_i$ and $f_j$. Thus,
\begin{equation}\label{intersection}
E_i\cdot f_i=-1, \quad E_i\cdot f_j=0
\end{equation}
for all $j\neq i$, and $E_i\cap E_j\cap E_k=\emptyset$ for all $1\leq i< j <k \leq m$.
\begin{figure}[h]\caption{\small Exceptional divisors of $\sigma\colon Z_m\to Z$.}\label{Ei-Ej}
\includegraphics[width=0.25\textwidth]{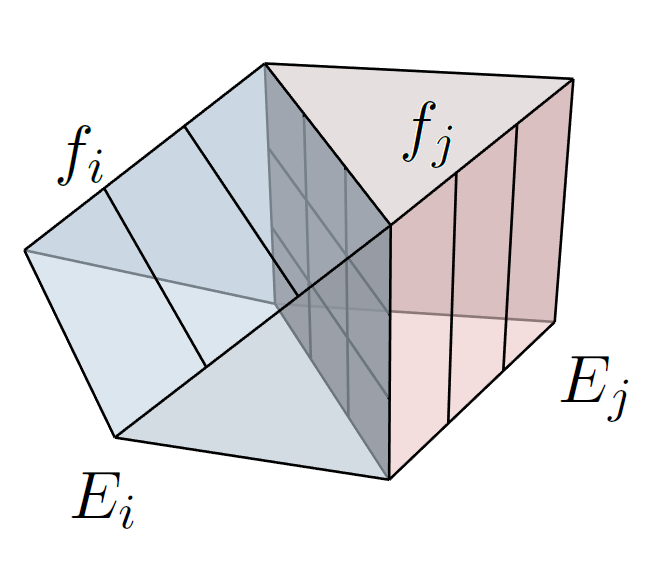}
\end{figure}

For each plane $\Pi_i$, the conormal bundle $\ma E:= \ma N^*_{\Pi_i\slash Z}$ sits in the exact sequence
\begin{equation}\label{M}
0 \to \ol_{\pr^2}(-3) \to \ol_{\pr^2}(-1)^{\oplus 3}\to \ma E \to 0
\end{equation}
given by the inclusion $\Pi_i\subset Z\subset\pr^5$, and we deduce from \eqref{M} that
\begin{equation}\label{chern_M}
\det (\ma E)\simeq \ol_{\pr^2}\quad \text{and} \quad c_2(\ma E)=3.
\end{equation}

Finally, we denote by $\w \Pi_{ijk}$ the plane generated by $\{p_{ij},\, p_{ik},\, p_{jk}\}$ for all $1\leq i <j<k \leq m$.

\medskip

By \textbf{abuse of notation}, $E_i$ is the exceptional divisor over the strict transform of $\Pi_i$ and $H\simeq\sigma_I^*\ol_Z(1)$ for all intermediate blow-ups $\sigma_I\colon Z_I \to Z$, $i\in I$.
\end{setup}

\medskip

We know that $Z_1$ and $Z_2$ are Fano by \cite[\S 11.7]{Casa25}. We describe their birational geometry in $\S$\ref{z1} and $\S$\ref{z2} as it will be essential in proving Proposition \ref{m=3} and Theorem \ref{thm:cremona}, the intermediate steps towards Theorems \ref{main:fano} and \ref{main:cone}. 
In $\S$\ref{gen_cond} we determine the necessary generality conditions $\boldsymbol{(\star)}$ so that $Z_m$ is Fano for $m\geq 3$.

\subsection{Birational geometry of $\boldsymbol{Z_1}$}\label{z1}
Let $\Pi_1\subset Z$ be as in Setup \ref{setup}.
The blow-up $\sigma\colon Z_1 \to Z$ along $\Pi_1$ is a smooth Fano fourfold with $\rho(Z_1)=2$, and it is well known that $Z_1$ has a quadric bundle onto $\pr^2$.
As this will be used later, we describe in detail its geometry in the lemma and remark that follow.

\begin{lemma}\label{quadric-bundle}
Let $\Pi_1\subset Z$  and $\sigma\colon Z_1 \to Z$ be as in Setup \ref{setup}. The divisor $H-E_1$ is globally generated and it induces a quadric bundle $\ph\colon Z_1\to\pr^2$.
\end{lemma}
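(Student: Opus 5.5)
The plan is to realise $|H-E_1|$ as the pullback of the linear projection from the plane $\Pi_1$. Take coordinates with $\Pi_1=V(x_0,x_1,x_2)\subset\pr^5$. The projection $\pi\colon\pr^5\dashrightarrow\pr^2$, $x\mapsto(x_0:x_1:x_2)$, is resolved by blowing up $\Pi_1$, and the hyperplanes through $\Pi_1$ pull back to $|H-E_1|$ on $\Bl_{\Pi_1}\pr^5$. Restricting to the strict transform $Z_1$ of $Z$, the sections $x_0,x_1,x_2$ of $\ol_Z(1)$ vanishing on $\Pi_1$ pull back to sections of $H-E_1$. Their common zero locus on $Z_1$ is the set of points of $E_1$ where the induced map to $\pr^2$ is undefined. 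Since $\ma I_{\Pi_1}$ is generated by $x_0,x_1,x_2$, the surjection $\ol_{\pr^2}(-1)^{\oplus3}\to\ma E$ of \eqref{M} says that on $E_1=\pr_{\Pi_1}(\ma E)$ these three sections restrict to sections of $\ol_{E_1}(1)$ generating it. So they have no common zero, $H-E_1$ is globally generated, and we get a morphism $\ph\colon Z_1\to\pr^2$ extending the projection from $\Pi_1$.

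To identify the fibres, fix $y\in\pr^2$ and let $\Lambda_y\cong\pr^3$ be the span of $\Pi_1$ and the corresponding point. Then $Z\cap\Lambda_y=\Pi_1\cup Q_y$ with $Q_y\subset\Lambda_y$ a quadric surface, possibly singular or reducible. This holds because $\Lambda_y\not\subset Z$: a smooth cubic fourfold contains no $\pr^3$, since every hypersurface of degree $d\ge 2$ containing a linear subspace of dimension $\ge n/2$ in $\pr^{n+1}$ (here $n=4$) is singular. The fibre $\ph^{-1}(y)$ is the strict transform of $Q_y$, which is isomorphic to $Q_y$. Indeed, the fibre is cut out on $\Bl_{\Pi_1}\Lambda_y\cong\Lambda_y$ by the residual equation, which is the quadric $Q_y$; note that $\Bl_{\Pi_1}\Lambda_y=\Lambda_y$ because $\Pi_1$ is a Cartier divisor in $\Lambda_y$. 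So every fibre is a quadric surface.

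It remains to check that $\ph$ is a contraction in the sense of the paper. Surjectivity holds since every $\Lambda_y$ meets $Z$ in a residual quadric. Connectedness of fibres is clear because every quadric surface, including a union of two planes, is connected. Equivalently, $\ph_*\ol_{Z_1}=\ol_{\pr^2}$, since $\ph$ is flat with connected reduced-enough fibres; flatness follows from miracle flatness, as all fibres are two-dimensional over a smooth base and $Z_1$ is smooth.

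The main obstacle is the two fibre-dimension points: ruling out $\Lambda_y\subset Z$, and justifying that the fibre equals the residual quadric scheme-theoretically. Write $F=\sum_{i\le 2}x_iA_i+G$, where $G$ involves only $x_0,x_1,x_2$ quadratically. The total transform of $Z\cap\Lambda_y$ in the $\pr^1$-bundle chart is then divisible by the exceptional equation exactly once. This is a local computation using that $Z$ is smooth along $\Pi_1$, so $\Pi_1$ has multiplicity one in $Z$.
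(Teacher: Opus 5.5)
Your argument is correct and is essentially the paper's. Both proofs resolve the projection from $\Pi_1$ by $W_1:=\Bl_{\Pi_1}\pr^5\to\pr^2$, obtain $H-E_1$ as the restriction of the pullback of $\ol_{\pr^2}(1)$, and identify the fibres with the residual quadrics in the $3$-planes through $\Pi_1$. The only difference is that the paper does your fibrewise residual computation globally, via the $\pr^3$-bundle $\phi\colon W_1\cong\pr_{\pr^2}(\ol(1)\oplus\ol^{\oplus 3})\to\pr^2$ and $Z_1\in|\ol_{W_1}(2)+\phi^*\ol_{\pr^2}(1)|$.

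Two small corrections. First, a smooth hypersurface of dimension $n$ contains no linear space of dimension $>n/2$; the bound $\geq n/2$ you quote would forbid the planes $\Pi_i$ themselves, though the correct bound still excludes $\Lambda_y\subset Z$. Second, on $E_1$ the sections $x_0,x_1,x_2$ generate $(H-E_1)_{|E_1}\cong\ol_{E_1}(1)\otimes(\sigma_{|E_1})^*\ol_{\Pi_1}(1)$, not $\ol_{E_1}(1)$ itself.
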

\begin{proof}
Let $\tau\colon W_1\to\pr^5$ be the blow-up along $\Pi_1$. It is well known that $W_1$ has a $\pr^3$-bundle structure $$\phi\colon W_1\cong \pr_{\pr^2}\left(\ol(1)\oplus\ol^{\oplus 3}\right) \to \pr^2$$
satisfying $\tau^*\ol_{\pr^5}(1)-\Exc(\tau)\simeq\phi^*\ol_{\pr^2}(1)$; denote by $\ol_{W_1}(1)$ the tautological divisor class. Since $Z_1\simeq \phi^*\ol_{\pr^5}(3)-\Exc(\tau)\simeq -\frac{1}{2}K_{W_1}$, it follows that
$$Z_1\in |\ol_{W_1}(2)+\phi^*\ol_{\pr^2}(1)|$$
and so $\ph:=\phi_{|Z_1} \colon Z_1 \to \pr^2$ is a quadric bundle.
To conclude, note that $H-E_1$ is the restriction of $\tau^*\ol_{\pr^5}(1)-\Exc(\tau)$ to $Z_1$.
Thus, $H-E_1\simeq \ph^*\ol_{\pr^2}(1)$ and it is globally generated.
\end{proof}
\begin{remark}\label{quadric-bundle-geometry}
Setting as in Lemma \ref{quadric-bundle}: we describe in more details the geometry of the quadric bundle $\ph\colon Z_1\to\pr^2$ .
The fibres of $\phi\colon {W_1}\to \pr^2$ are the strict transform of the $3$-planes $\Lambda\subset\pr^5$ containing $\Pi_1$. Since $\Lambda$ is not tangent to $Z$ along $\Pi_1$ (otherwise $Z$ would be singular), the intersection $\Lambda \cap Z$ is a reducible cubic given by the union of $\Pi_1$ and a quadric surface $Q_{\Lambda}$ not containing $\Pi_1$. We deduce that $Z_1\subset{W_1}$ meets the strict transform of $\Lambda$ along the strict transform of $Q_{\Lambda}$, and these quadric surfaces are precisely the fibres of $\ph\colon Z_1\to\pr^2$: they are covered by lines $\ell\subset Z$ that intersect $\Pi_1$ at a point, i.e.\ such that $(H-E_1)\cdot \ell=0$.
\end{remark}

\subsection{Birational geometry of $\boldsymbol{Z_2}$}\label{z2}
Let $\Pi_1,\, \Pi_2\subset Z$ be as in Setup \ref{setup}. The blow-up $\sigma\colon Z_2 \to Z$ along $\Pi_1\cup \Pi_2$ is a smooth Fano fourfold with $\rho(Z_2)=3$, and it has exactly three elementary Mori contractions by \cite[\S 11.7]{Casa25}. Denote by $F_{12}$ the strict transform of the unique hyperplane section of $Z\subset\pr^5$ containing $\Pi_1\cup \Pi_2$. The elementary contractions of $Z_2$ are:
\begin{enumerate}[$i.$]\setlength\itemsep{4pt}
\item the blow-down ${\pi_{\{i\}}} \colon Z_2 \to Z_{\{i\}}$ of $E_j$ for $i\neq j\in\{1,2\}$, with supporting divisor $2H-E_i$;
\item the blow-down $\psi \colon Z_2 \to T$ of $F_{12}$, with supporting divisor $2H-E_1-E_2$.
\end{enumerate}
\begin{figure}[h]\caption{\small A section of $\NE(Z_2)$ on the left, and of $\Nef(Z_2)$ on the right.}\label{Cone-Z_2}
\vspace{5pt}
        \begin{tikzpicture}
        \draw (0,1.4) node[anchor=south]{\small$[f_{12}]$};
        \draw (1,0) node[anchor=125]{\small$[f_2]$};
        \draw (-1,0) node[anchor=55]{\small$[f_1]$};
        \draw[fill=black] (0,1.4) circle (1.5pt);
        \draw[fill=black] (1,0) circle (1.5pt);
        \draw[fill=black] (-1,0) circle (1.5pt);
        \draw (0,1.4) -- (1,0);
        \draw (0,1.4) -- (-1,0);
        \draw (1,0) -- (-1,0);
        \draw (4,0) node[anchor=north]{\small$[H]$};
        \draw (5,1.4) node[anchor=south]{\small$[H-E_1]$};
        \draw (3,1.4) node[anchor=south]{\small$[H-E_2]$};
        \draw[fill=black] (4,0) circle (1.5pt);
        \draw[fill=black] (5,1.4) circle (1.5pt);
        \draw[fill=black] (3,1.4) circle (1.5pt);
        \draw (4,0) -- (5,1.4);
        \draw (4,0) -- (3,1.4);
        \draw (5,1.4) -- (3,1.4); 
        \end{tikzpicture}
\end{figure}
\begin{remark}\label{H_12}
We now describe the geometry of $\psi \colon Z_2 \to T$, see \cite[\S 11.7]{Casa25}.
Let $\tau\colon W_2\to \pr^5$ be the blow-up along $\Pi_1$ and then along the strict transform of $\Pi_2$.
There exists a blow-up $\Psi\colon W_2 \to \pr_{\pr^2}(\ol\oplus\ol(1)^{\oplus 3})$ along a surface $V\cong\PP$, with exceptional divisor the transform of the unique hyperplane of $\pr^5$ containing $\Pi_1\cup\Pi_2$.
Then, $Z_2\subset W_2$ and the restriction
$$\psi:=\Psi_{|Z_2}\colon Z_2 \to T:=\Psi(Z_2)$$
is the blow-up along $V$ with $\Exc(\psi)=F_{12}=\Exc(\Psi)\cap Z_2$, where $T$ is a smooth Fano fourfold of index $2$ that is the double cover of $\pr^2\times\pr^2$ with branch divisor of bidegree $(2,2)$.
Such double cover $T$ of $\pr^2\times\pr^2$ is commonly known as a Verra fourfold \cite{CKKM}, and the rational map $\psi\circ\sigma^{-1}\colon Z \dashrightarrow T\subset \pr^9$ is induced by the linear system $|H^0(\ma I_{\Pi_1\cup\Pi_2/Z}\otimes\ol_Z(2))|$.
Finally, note that $F_{12}\simeq H-E_1-E_2$ and we denote by $f_{12}$ the class of a fibre of $\psi_{|F_{12}}\colon F_{12}\to V$. Figure \ref{Cone-Z_2} shows a section of the Mori and nef cones of $Z_2$.
\end{remark}

\subsection{Generality conditions}\label{gen_cond}
\begin{lemma}\label{condition:3bis}
If $Z_3$ is Fano, then $\Pi_1,\, \Pi_2,\, \Pi_3$ are not contained in a common hyperplane.
\end{lemma}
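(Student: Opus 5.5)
We argue by contraposition: assume $\Pi_1,\Pi_2,\Pi_3$ lie in a common hyperplane $L\subset\pr^5$ and exhibit an effective curve $\Gamma$ on $Z_3$ with $-K_{Z_3}\cdot\Gamma\leq 0$, contradicting ampleness of $-K_{Z_3}\simeq 3H-E_1-E_2-E_3$ from \eqref{anticanonical}. The natural candidate comes from the hyperplane section $Y:=L\cap Z$, a cubic threefold containing the three planes. Its strict transform $\w Y\subset Z_3$ has class $H-E_1-E_2-E_3$, since $Y$ contains each $\Pi_i$ with multiplicity one (it is smooth at general points of $\Pi_i$, because $Z$ is smooth and $L\neq T_pZ$ generically along $\Pi_i$).

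The curve to test will be the line $\ell:=\overline{p_{12}\,p_{13}}\subset\Pi_1$ and its analogues. More robustly, I would use the restriction to $\w Y$: one gets
$$-K_{Z_3}|_{\w Y}\simeq \bigl(2H+(H-E_1-E_2-E_3)\bigr)|_{\w Y},$$
and since $\w Y$ contains the $E_i$-fibres over the planes, the plan is to find curves inside $\w Y$ along which $\w Y$ is very negative. Concretely, the points $p_{12},p_{13},p_{23}$ all lie in $L$, and the plane $\w\Pi_{123}$ lies in $L$. Consider first a line $\ell\subset\Pi_1$ through $p_{12}$ and $p_{13}$ (these are distinct). Its strict transform $\w\ell$ sits in $E_1$ as a section over the strict transform of $\ell$ in $P_1$, and one has $H\cdot\w\ell=1$, $E_2\cdot\w\ell=E_3\cdot\w\ell=1$, while $E_1\cdot\w\ell$ equals $-\deg$ of $\ma N^*_{\Pi_1/Z}|_\ell$ twisted by the chosen section. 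The aim is to choose the section $\w\ell\subset E_1$ corresponding to the quotient $\ma N^*_{\Pi_1/Z}|_\ell\to\ol_\ell(-1)$ induced by $L$: the linear form defining $L$ vanishes on $\Pi_1$, so it gives a section of $\ma N^*_{\Pi_1/\pr^5}(1)$, hence of $\ma N^*_{\Pi_1/Z}(1)$ via \eqref{M}. Since $L$ contains $\Pi_2$ and $\Pi_3$, this conormal direction should be ``tangent'' to $\w Y$, and the corresponding section of $E_1$ over $\ell$ lies in $\w Y$.

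The key computation, and the main obstacle, is to pin down $E_1\cdot\w\ell$ for this section precisely. Using $\det\ma E\simeq\ol$ and that the form of $L$ gives a nowhere-vanishing(?) map $\ol(-1)\to\ma E$ on $\Pi_1$ (vanishing only where $L$ is tangent to $Z$, which is a finite set or empty), the quotient $\ma E/\ol(-1)$ restricted to $\ell$ has degree $1$, which makes $E_1\cdot\w\ell=\ -1$ after accounting for the blown-up points, giving $-K_{Z_3}\cdot\w\ell=3-(-1)-1-1=2$, not enough. So instead I expect the right curve to be the curve in $\w Y$ where $\w Y$ meets $E_1$ in a way forced by the extra incidences, and I would compute $-K_{Z_3}\cdot\Gamma=2H\cdot\Gamma+\w Y\cdot\Gamma$, showing $\w Y\cdot\Gamma\leq -2H\cdot\Gamma$. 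If the direct choice fails, a fallback is: in $\w Y$ the divisor $3H-\sum E_i$ restricts to a class whose top self-intersection on $\w Y$ or $(-K_{Z_3})^3\cdot\w Y$ is computed via Lemma \ref{conti}-style formulas to be $\leq 0$, which also contradicts ampleness, since an ample divisor must have $(-K)^3\cdot\w Y>0$. I would compute $(3H-\sum E_i)^3\cdot(H-\sum E_i)$ using \eqref{intersection}, \eqref{chern_M} and $H^2\cdot E_i\cdot E_j=0$, and check that it is nonpositive; this numerical route is likely the cleanest.
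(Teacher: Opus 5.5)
Your last route (the one you call the fallback) is a complete and correct proof, and it differs from the paper's. The curve you discarded first also works; you dropped it because of a computational slip.

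\emph{The slip.} The linear form of $L$ gives a map $\mathcal{O}_{\Pi_1}(-1)\to\mathcal{E}$. This map vanishes exactly at the points $x\in\Pi_1$ with $T_xZ=L$. Since $T_{p_{1j}}Z\supseteq\Pi_1+\Pi_j=L$ for $j=2,3$, it vanishes at both points $p_{12},p_{13}$ of your line $\ell=\ell_1$. So the section of $E_1$ over $\bar\ell_1$ lying in $\widetilde Y$ corresponds to the saturation $\mathcal{O}_\ell(-1)(p_{12}+p_{13})\cong\mathcal{O}_\ell(1)\subset\mathcal{E}_{|\ell}$. Its quotient is $\mathcal{O}_\ell(-1)$, not the degree-one sheaf $\mathcal{E}_{|\ell}/\mathcal{O}_\ell(-1)$, which has torsion. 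Hence $E_1\cdot\widetilde\ell=+1$ and $-K_{Z_3}\cdot\widetilde\ell=3-1-1-1=0$, equivalently $\widetilde Y\cdot\widetilde\ell=-2=-2H\cdot\widetilde\ell$. That is exactly what your second paragraph was looking for.

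This is the paper's curve with indices permuted. The paper works on $Z_2$: it notes $P_3\subset F_{12}$ and shows $P_3$ dominates the centre of $\psi\colon Z_2\to T$ (otherwise $P_3$ would be a Hirzebruch surface). So $\bar\ell_3$ is a $\psi$-fibre, and its lift $\widehat\ell_3\subset E_3$ is $K_{Z_3}$-trivial.

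\emph{The numerical route, carried out.} On $Z_3$ one has
\[
H^4=3,\quad H^3E_i=0,\quad H^2E_i^2=-1,\quad HE_i^3=0,\quad E_i^4=c_2(\mathcal{E})=3,
\]
the last by Lemma~\ref{conti} applied to $E_i\cong\mathbb{P}_{P_i}(\mu_i^*\mathcal{E})$. For $i\neq j$, on $E_i\cap E_j\cong\mathbb{P}^1\times\mathbb{P}^1$ one has $E_{i|E_i\cap E_j}\equiv-f_j$ and $E_{j|E_i\cap E_j}\equiv-f_i$. This gives $E_i^2E_j^2=1$ and $E_i^3E_j=0$. Every monomial containing $HE_iE_j$ or $E_1E_2E_3$ vanishes.

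With $S:=E_1+E_2+E_3$ this yields $H^3S=HS^3=0$, $H^2S^2=-3$ and $S^4=9+18=27$. As a check, $(3H-S)^4=243-162+27=108$, matching Table~\ref{final table}. Then
\[
(3H-S)^3\cdot(H-S)=27H^4-54H^3S+36H^2S^2-10HS^3+S^4=81-108+27=0 .
\]
The divisor $\sigma^*Y-E_1-E_2-E_3$ is nonzero and effective, since each $E_i$ lies over $\Pi_i\subset Y$; you do not even need the multiplicity-one claim. Because $A^3\cdot D>0$ for $A$ ample and $D$ nonzero effective, $-K_{Z_3}$ cannot be ample.

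\emph{Comparison.} Your argument is self-contained: it uses only intersection numbers. It needs neither the contraction $\psi\colon Z_2\to T$ of Remark~\ref{H_12} nor the Hirzebruch-surface step. What it does not produce is the explicit $K_{Z_3}$-trivial curve. The paper reuses that curve in Corollary~\ref{co:line-types} to link the failure of \textbf{G0} to the lines $\ell_i$ being of second type. Your corrected computation of $\mathcal{E}_{|\ell_1}\cong\mathcal{O}(1)\oplus\mathcal{O}(-1)$ recovers this as well.

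As written, your proposal should commit to one of the two arguments and carry it out. The intermediate paragraph about a curve ``forced by the extra incidences'' is not yet an argument.
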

\begin{proof}
It follows from \eqref{m!} that $\sigma\colon Z_3 \to Z$ factors as
$$Z_3\xrightarrow{\pi_{\{1, 2\}}} Z_{2}  \xrightarrow{\sigma_{\{1, 2\}}} Z,$$
where $\sigma_{\{1,2\}} \colon Z_{\{1,2\}} \to Z$ is the blow-up along $\Pi_1\cup \Pi_2$, and $\pi_{\{1, 2\}}$ is the blow-up along the strict transform $P_3\cong\Bl_{\{2\, pts\}}\pr^2$ of $\Pi_3$.
Let $F_{12}$ be the exceptional divisor of the blow-up $\psi\colon Z_2 \to T$ from Remark \ref{H_12}.

If $\Pi_1,\, \Pi_2,\, \Pi_3$ are contained in a common hyperplane, then $P_3\subset F_{12}$.
We have that $P_3$ dominates the centre of $\psi$, otherwise $\psi(P_3)\cong\pr^1$ and $P_3$ would be Hirzebruch surface, a contradiction.
It follows that the strict transform $\bar\ell_3\subset P_3$ of the line $\ell_3:= \overline{p_{13}\,p_{23}}\subset \Pi_3$ is a $\psi$-fibre, and we deduce from
$$-K_{Z_3}\simeq \pi_{\{1, 2\}}^*(-K_{Z_2})-E_3$$
that $-K_{Z_3}\cdot\wi\ell_3=0$, where $\wi\ell_3$ is the strict transform of $\bar\ell_3\subset F_{12}\cap P_3$, and $Z_3$ is not Fano.
\end{proof}

\begin{lemma}\label{condition:m}
Assume that $Z_m$ is Fano. Then, $Z_{m-1}$ is Fano and $P_{i}$ is a smooth del Pezzo surface for all $1\leq i \leq m$.
\end{lemma}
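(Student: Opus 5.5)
The plan is to prove both claims with one standard fact: if $X$ is a smooth Fano variety and $\pi\colon X\to Y$ is the blow-up of a smooth variety $Y$ along a smooth centre $C$, then $-K_Y$ is ample provided it is positive on every curve. I will apply it to the factorization \eqref{m!} $Z_m\xrightarrow{\sigma_m}Z_{m-1}$, where $\sigma_m$ is the blow-up along the strict transform $P_m$ of $\Pi_m$ with exceptional divisor $E_m$. Recall that $-K_{Z_m}\simeq\sigma_m^*(-K_{Z_{m-1}})-E_m$, with $-K_{Z_{m-1}}\simeq 3H-(E_1+\dots+E_{m-1})$ by \eqref{anticanonical}.

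\textbf{$Z_{m-1}$ is Fano.} Let $C\subset Z_{m-1}$ be an irreducible curve; I will show $-K_{Z_{m-1}}\cdot C>0$ by cases.
\begin{enumerate}[$i.$]
\item If $C\not\subset P_m$, let $\w C$ be its strict transform. Then $E_m\cdot\w C\geq0$, so
$$-K_{Z_{m-1}}\cdot C=-K_{Z_m}\cdot\w C+E_m\cdot\w C>0.$$
\item If $C\subset P_m$, choose a curve $\w C\subset E_m$ mapping onto $C$ with degree $d\geq 1$, for instance a section of the $\pr^1$-bundle $E_m|_C\to C$. The numerical relation above only holds up to fibre classes, so I instead use the projection formula:
$$d\,(-K_{Z_{m-1}}\cdot C)=\sigma_m^*(-K_{Z_{m-1}})\cdot\w C=-K_{Z_m}\cdot\w C+E_m\cdot\w C.$$
Here $E_m\cdot\w C$ can be negative, so I choose $\w C$ carefully. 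Since $E_m\cong\pr_{P_m}(\ma N^*)$ and $-E_m|_{E_m}$ is the tautological class, a section corresponding to a quotient $\ma N^*|_C\to L$ gives $E_m\cdot\w C=-\deg L$. I need a quotient of non-positive degree. Since $\ma N_{P_m/Z_{m-1}}$ has rank $2$, I will pick the section through the maximal-degree sub-line-bundle of $\ma N^*|_C$; that is, I use the "minimal" section instead.
\end{enumerate}
The cleaner route, which I will actually follow, is the classical argument of Wiśniewski: $-K_{Z_{m-1}}\cdot C=-K_{Z_m}\cdot\w C+E_m\cdot\w C$ for \emph{some} lift $\w C$ with $E_m\cdot \w C\ge -K_{Z_m}\cdot f_m-1 = 0$ ... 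Since $-K_{Z_m}\cdot f_m=1$, any curve $\w C'$ numerically equivalent to $\w C+af_m$ changes $-K\cdot$ by $a$. So I pick the lift $\w C$ minimizing $-K_{Z_m}\cdot \w C$ subject to $\w C\equiv \gamma + a f_m$ being effective, and argue positivity from ampleness on both pieces. This case $C\subset P_m$ is the main obstacle.

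As a fallback for that case I will compute directly. The restriction $-K_{Z_{m-1}}|_{P_m}$ equals $-K_{P_m}+\det\ma N_{P_m/Z_{m-1}}$ by adjunction. The normal bundle is $\mu_m^*\ma N_{\Pi_m/Z}$ twisted down at the points, with determinant $-\sum e_j$, by \eqref{chern_M}. This gives $-K_{Z_{m-1}}|_{P_m}\simeq 3h-2\sum e_j$ minus\dots, which is hard to control. So I prefer the following argument, which proves the del Pezzo statement first.

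\textbf{$P_i$ is del Pezzo.} By symmetry of the factorizations \eqref{m!}, it suffices to treat $P_m$. Consider the surface $S:=E_m|_{\text{section}}$. More simply, restrict $-K_{Z_m}$ to $E_m$: it is ample there. By adjunction, $-K_{E_m}=(-K_{Z_m}-E_m)|_{E_m}=-K_{Z_m}|_{E_m}+\xi$ with $\xi$ the tautological class. Let $\pi\colon E_m\to P_m$ be the projection. For a curve $\gamma\subset P_m$, I take the surface $\pi^{-1}(\gamma)$ and use $\pi_*$ of the ample class $(-K_{Z_m}|_{E_m})^2$, together with the relative canonical formula $K_{E_m}=\pi^*(K_{P_m}+\det\ma N^*)-2\xi$. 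Pushing forward $(-K_{Z_m}|_{E_m})\cdot\xi$-type intersections expresses $\pi_*$ of an ample $1$-cycle as a positive multiple of $-K_{P_m}$ plus something; this requires identifying $\det \ma N^*_{P_m/Z_{m-1}}$ with $\sum_j e_j$. Then $-K_{P_m}\cdot\gamma>0$ follows. Equivalently and more concretely, I will use the lines: $P_m$ is del Pezzo if and only if no three of the points $p_{mj}$ are collinear, since there are at most $7$ points, with an extra condition on conics when $m=8$. If three were collinear, the strict transform of the line $\ell$ would be a $(-2)$-curve. I would then produce a curve in $E_m$ over $\ell$ with $-K_{Z_m}\le0$, using $\ma N^*|_{\bar\ell}$ and a section from a degree-$\le 0$ quotient, exactly as in Lemma \ref{condition:3bis}. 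This is a contradiction. The Fano property of $Z_{m-1}$ on curves inside $P_m$ then follows from the adjunction formula computed above, now that $-K_{P_m}$ is ample.
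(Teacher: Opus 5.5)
There is a genuine gap. Everything you need about $P_m$ rests on a miscomputation, so neither the case $C\subset P_m$ nor the del Pezzo claim is actually proved.

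\textbf{The restriction of $-K_{Z_{m-1}}$ to $P_m$.} Your assertion that $\ma N_{P_m/Z_{m-1}}$ is $\mu_m^*\ma N_{\Pi_m/Z}$ ``twisted down at the points'', with determinant $-\sum e_j$, is false.

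\begin{enumerate}
\item[(a)] The plane $\Pi_m$ meets each $\Pi_j$ transversally at $p_{jm}$. Blowing up a centre that is met transversally does not twist the normal bundle. Locally, $\Bl_{\{x_1=x_2=0\}}\af^4\cong\Bl_0\af^2\times\af^2$, and the strict transform of $\{x_3=x_4=0\}$ is $\Bl_0\af^2\times\{0\}$.
\item[(b)] Hence $\ma N_{P_m/Z_{m-1}}\cong\mu_m^*\ma N_{\Pi_m/Z}$. This is exactly why $E_m\cong\pr_{P_m}(\mu_m^*\ma E)$ in Setup~\ref{setup}, and its determinant is trivial by \eqref{chern_M}.
\item[(c)] More directly, restricting $-K_{Z_{m-1}}\simeq 3H-(E_1+\dots+E_{m-1})$, which you wrote down yourself, gives ${-K_{Z_{m-1}}}_{|P_m}\simeq 3h-\sum_{j\neq m}e_j=-K_{P_m}$.
\end{enumerate}

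Instead you arrived at ``$3h-2\sum e_j$ minus\dots'', which you call hard to control. So your closing appeal to ``the adjunction formula computed above'' has nothing behind it. The paragraph about minimising lifts is not an argument either. As written, curves $C\subset P_m$ are never handled.

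\textbf{The del Pezzo claim.} Your push-forward sketch is unfinished and relies on the same wrong identification $\det\ma N^*_{P_m/Z_{m-1}}=\sum e_j$. The ``concrete'' route has three problems:
\begin{enumerate}
\item[(a)] It misstates the criterion: six points on a conic must be excluded as soon as $m\geq7$, not only for $m=8$.
\item[(b)] It treats only collinear triples.
\item[(c)] With your determinant, $\ma N^*|_{\bar\ell}$ would have degree $3$, so a quotient of degree $\leq0$ need not exist.
\end{enumerate}

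With the correct data your ideas do go through. Let $\pi\colon E_m\to P_m$ be the projection and $\xi=-{E_m}_{|E_m}$. Take $\gamma=\bar\ell$, or the strict transform $\gamma\cong\pr^1$ of a conic through six points. Then $\mu_m^*\ma E_{|\gamma}$ has degree $0$, hence a quotient $L$ with $\deg L\leq0$. The corresponding section $\w C\subset E_m$ satisfies $-K_{Z_m}\cdot\w C=-K_{P_m}\cdot\gamma+\deg L\leq0$, contradicting that $Z_m$ is Fano.

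The cleaner route is the paper's. The class ${-K_{Z_m}}_{|E_m}=\pi^*(-K_{P_m})+\xi$ is the tautological class of $\pr_{P_m}(-K_{P_m}\otimes\mu_m^*\ma E)$. So this rank-$2$ bundle is ample, and hence so is its determinant $-2K_{P_m}$. Equivalently, your push-forward gives $\pi_*\bigl(({-K_{Z_m}}_{|E_m})^2\bigr)=-2K_{P_m}$. Its degree on any curve $\gamma$ equals $({-K_{Z_m}}_{|E_m})^2\cdot\pi^{-1}(\gamma)>0$.

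\textbf{Conclusion.} Once $-K_{P_m}$ is ample, ${-K_{Z_{m-1}}}_{|P_m}\simeq-K_{P_m}$ is ample. Combine this with your case $i.$ (which is correct). Also use that $\NE(Z_{m-1})=(\sigma_m)_*\NE(Z_m)$ is polyhedral and spanned by curve classes, which is what justifies your ``positive on every curve implies ample''. Together these give that $Z_{m-1}$ is Fano. The paper packages this last step by citing an auxiliary lemma.
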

\begin{proof}
It suffices to show the claim for $i=m$.
As above, we factor $\sigma\colon Z_m \to Z$ as
$$Z_m\xrightarrow{\pi_I} Z_{m-1}  \xrightarrow{\sigma_I} Z,$$
with $I=\{1, \dots, m-1\}$, so that $\pi_I$ is the blow-up along $P_m$.
By \eqref{chern_M} and adjunction we see that ${-K_{Z_m}}_{|E_m}$ is the tautological class of $\pr_{P_m}(-K_{P_m}\otimes \mu_m^*\ma E)$, which is ample since ${Z_m}$ is Fano. Thus, $\det\bigl(-K_{P_m}\otimes \mu_m^* \ma E\bigr)=-2K_{P_m}$ is ample and $P_m$ is a smooth del Pezzo surface.
Finally, we have ${-K_{Z_{m-1}}}_{|P_m}\simeq -K_{P_m}$ (again, by \eqref{chern_M} and adjunction) and it follows that $Z_{m-1}$ is Fano, see for instance \cite[Lemma 2.2]{HS25bis}.
\end{proof}

Recall from Setup \ref{setup} that $\w \Pi_{ijk}$ is the plane generated by $\{p_{ij},\, p_{ik},\, p_{jk}\}$, where $p_{ij}=\Pi_i\cap \Pi_j$.
\begin{lemma}\label{condition:3}
If $Z_3$ is Fano, then $\w\Pi_{123}\not\subset Z$.
\end{lemma}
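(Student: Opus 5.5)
We argue by contraposition: assuming $\w\Pi_{123}\subset Z$, we exhibit an irreducible curve $\Gamma\subset Z_3$ with $-K_{Z_3}\cdot\Gamma\leq 0$. By \eqref{anticanonical} we have $-K_{Z_3}\simeq 3H-E_1-E_2-E_3$. The natural candidate is a line of $\w\Pi_{123}$ through two of the points $p_{ij}$, say $L:=\overline{p_{12}\,p_{13}}$, which lies in $Z$ because $\w\Pi_{123}\subset Z$.

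First, note that $\w\Pi_{123}$ is really a plane, i.e.\ $p_{12},p_{13},p_{23}$ are not collinear. If they were collinear, the line through them would lie in $\Pi_1\cap\Pi_2$, contradicting the fact that $\Pi_1\cap\Pi_2$ is a single point. By Lemma \ref{condition:3bis}, the planes $\Pi_1,\Pi_2,\Pi_3$ span $\pr^5$. In particular $\w\Pi_{123}\neq \Pi_i$, and $\w\Pi_{123}$ meets each $\Pi_i$ along the line of $\Pi_i$ through its two special points; for example $\w\Pi_{123}\cap\Pi_1\supseteq\overline{p_{12}\,p_{13}}=L$. So $L$ lies in $\Pi_1$ and is not a good test curve: its strict transform lies in $E_1$.

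We therefore use instead a line $M\subset\w\Pi_{123}$ that does not lie in any $\Pi_i$ but meets the planes at many points. A general line $M\subset\w\Pi_{123}$ meets each of the three lines $\w\Pi_{123}\cap\Pi_i$ in one point, transversally in the plane $\w\Pi_{123}$. Its strict transform $\wi M$ then satisfies $H\cdot\wi M=1$ and $E_i\cdot\wi M\geq 1$ for each $i$, hence
\[
-K_{Z_3}\cdot\wi M\leq 3-3=0.
\]
This contradicts the assumption that $Z_3$ is Fano.

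The key step is to check that the intersection of $\wi M$ with $E_i$ really is at least $1$. This holds because $M$ meets $\Pi_i$ at a point away from the $p_{jk}$, where $Z_3\to Z$ is locally the blow-up of a single smooth surface. There the strict transform of a curve through the centre meets the exceptional divisor positively, so $E_i\cdot\wi M\geq \operatorname{mult}_{M\cap\Pi_i}(M)=1$. The main obstacle is ensuring that we have three distinct intersection points, so that the three contributions add. This requires the three lines $\w\Pi_{123}\cap\Pi_i$ in $\w\Pi_{123}$ to be distinct, and they are: they are the three sides of the triangle with vertices $p_{12},p_{13},p_{23}$. A general line in $\w\Pi_{123}$ avoids the vertices and meets the three sides in three distinct points. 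This argument in fact does not need Lemma \ref{condition:3bis}; it only uses the non-collinearity of the $p_{ij}$ and the fact that $\w\Pi_{123}\cap\Pi_i$ contains the corresponding side.
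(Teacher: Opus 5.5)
Your proof is correct and is essentially the paper's argument. The paper restricts $-K_{Z_3}\simeq 3H-E_1-E_2-E_3$ to the strict transform $\w P$ of $\w\Pi_{123}$. There each $E_i$ restricts to $\ol_{\pr^2}(1)$, cut out by the side $\w\Pi_{123}\cap\Pi_i$, so $-K_{Z_3}$ is trivial on $\w P$. Your general line $M$ is just a test curve inside $\w P$, and using only the inequality $E_i\cdot\wi M\geq 1$ spares you from identifying $\w P\cong\pr^2$ and the restrictions $E_i|_{\w P}$ exactly.
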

\begin{proof}
Set $\w\Pi:=\w\Pi_{123}$ and let $Z$ be a general cubic fourfold in the linear system
$$|H^0(\ma I_{\Pi_1\cup \Pi_2 \cup \Pi_3 \cup \w \Pi/\pr^5}\otimes \ol_{\pr^5}(3))|.$$
Each $\Pi_i$ intersects $\w \Pi$ along the line $\overline{p_{ij}\,p_{ik}}$, so the restriction ${E_i}_{|\w P}$ along the strict transform $\w P$ of $\w \Pi$ is $\ol_{\pr^2}(1)$. It follows that $-K_{Z_3}\simeq3H-E_1-E_2-E_3$ is trivial along $\w P$, thus $Z_3$ is not Fano.
\end{proof}

\begin{generality}$\boldsymbol{(\star)}$
Let $\Pi_1, \dots, \Pi_m\subset Z$ be as in Setup \ref{setup}. We impose the following generality conditions for $3\leq m\leq8$:
\begin{enumerate}\setlength\itemsep{4pt}
\item[\textbf{G0.}] the planes $\{\Pi_i,\, \Pi_j,\, \Pi_k\}$ do not lie on a common hyperplane for all $1\leq i< j <k \leq m$;
\item[\textbf{G1.}] the blow-up $\mu_i\colon P_i\to \Pi_i$ at $\{p_{ij}\}_{j\neq i}$ is a smooth del Pezzo surface for all $1\leq i \leq m$, that is the points $\{p_{ij}\}_{j\neq i}\subset \Pi_i$ are in general position;
\item[\textbf{G2.}] the plane $\w \Pi_{ijk}$ is not contained in $Z$ for all $1\leq i< j <k \leq m$.
\end{enumerate}
\end{generality}

\noindent Note that the generality condition \textbf{G1} is automatically satisfied if $m=3$.

\subsection{Geometry of the configuration}\label{geom-conf}
Let $\Pi_1,\dots,\Pi_m\subset Z$ be as in Setup \ref{setup} with $m\geq3$, satisfying the generality conditions $\boldsymbol{(\star)}$. We set the following notation:
\begin{itemize}\setlength\itemsep{4pt}
\item $D_{ij}\subset \pr^5$ is the unique hyperplane $\Pi_i+\Pi_j$, it contains $\w\Pi_{ijk}$ for all $k\neq i,j$;
\item $\ell_i^{jk}$, or simply $\ell_i$ if $m=3$, is the line $\overline{p_{ij}\,p_{ik}}=D_{jk}\cap \Pi_i$;
\item ${\w\ell_{ijk}}^{ab}$ is the line $D_{ab}\cap\w\Pi_{ijk}$, spanned by the collinear points $\ell_i^{jk}\cap\ell_i^{ab}$, $\ell_j^{ik}\cap\ell_j^{ab}$ and $\ell_k^{ij}\cap\ell_k^{ab}$;
\item $Q_{\Lambda_i}$ is the quadric surface from the intersection $Z\cap\Lambda_i=\Pi_i\cup Q_{\Lambda_i}$, where $\Lambda_i$ is a $3$-plane containing $\Pi_i$ (see Remark \ref{quadric-bundle-geometry}).
\end{itemize}

\begin{figure}[h]\caption{\small Special lines in $\Pi_i,\,\Pi_j,\,\w\Pi_{ijk}$.}
\includegraphics[width=0.32\textwidth]{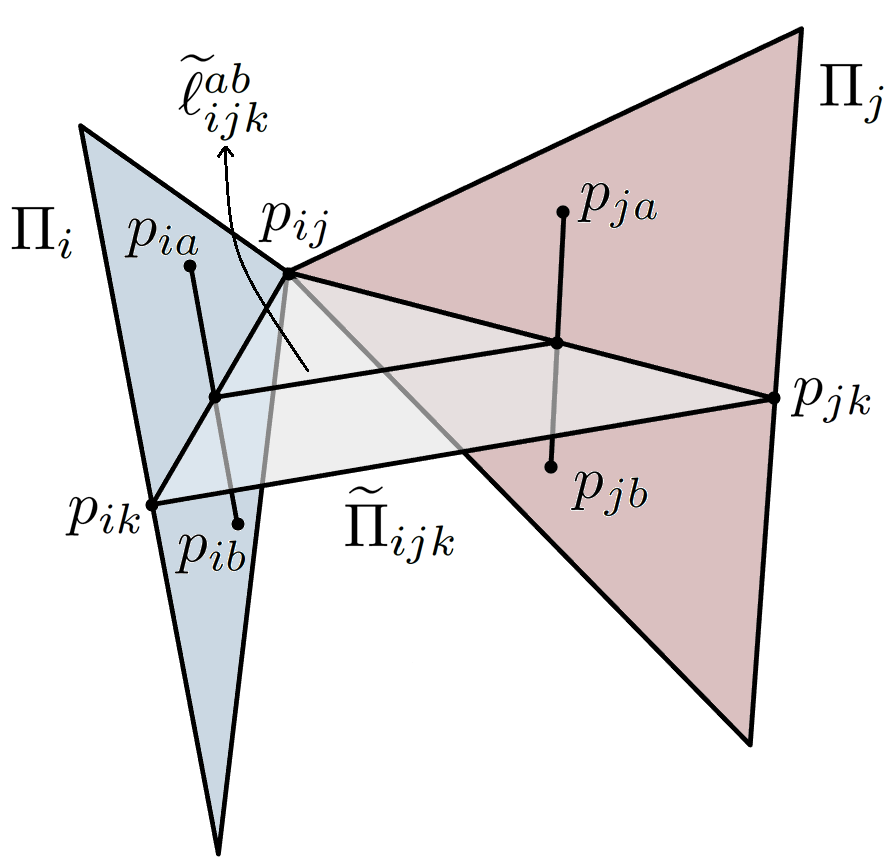}
\end{figure}

\begin{lemma}\label{lemma:geometry-3planes}
Let $\Pi_1,\, \Pi_2,\, \Pi_3\subset Z$ be as in Setup \ref{setup}, satisfying the generality conditions $\boldsymbol{(\star)}$.
Let $\ell\subset \pr^5$ be a line meeting $\Pi_i$ along a point for all $i\in\{1,2,3\}$, and set $\bar\Lambda_i:=D_{ij}\cap D_{ik}$.
Then:
\begin{enumerate}[i.]\setlength\itemsep{4pt}
\item $\ell\subset\w\Pi_{123}$, if the points $\ell\cap\Pi_i$ are distinct;
\item $\ell\not\subset Z$;
\item $Q_{\bar\Lambda_i}$ is a smooth quadric surface, and $Q_{\bar\Lambda_i}\cap \w\Pi_{123}=\ell_j\cup \ell_k$.
\end{enumerate}
\end{lemma}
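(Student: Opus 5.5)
We work in coordinates adapted to the configuration. By \textbf{G0}, $\Pi_1+\Pi_2+\Pi_3=\pr^5$. Each $D_{ij}$ is a hyperplane, and the three hyperplanes $D_{12},D_{13},D_{23}$ are distinct. Otherwise, say $D_{12}=D_{13}$, and this hyperplane would contain all three planes, against \textbf{G0}. Moreover $\Pi_i\subset D_{ij}\cap D_{ik}$, so $\bar\Lambda_i=D_{ij}\cap D_{ik}$ is a $3$-plane containing $\Pi_i$. Hence $Q_{\bar\Lambda_i}$ is defined as in Remark \ref{quadric-bundle-geometry}. We also have $\Pi_j\cap\bar\Lambda_i=\Pi_j\cap D_{ik}=\ell_j$, since $\Pi_j\subset D_{ij}$ and $\Pi_j\not\subset D_{ik}$ by \textbf{G0}.

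\emph{Item i.} Let $q_i:=\ell\cap\Pi_i$ be distinct points. Then $\ell$ is the line through $q_1,q_2$, and $q_3$ lies on it.
\begin{itemize}
\item Since $q_1,q_2\in\Pi_1+\Pi_2=D_{12}$, we get $\ell\subset D_{12}$.
\item Hence $q_3\in D_{12}\cap\Pi_3=\ell_3$.
\item By symmetry $q_i\in\ell_i\subset\w\Pi_{123}$ for all $i$.
\end{itemize}
So $\ell=\overline{q_1q_2}\subset\w\Pi_{123}$.

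\emph{Item iii.} This is the main step. First, $\ell_j,\ell_k\subset Z\cap\bar\Lambda_i$ and $\ell_j,\ell_k\not\subset\Pi_i$. Indeed $\ell_j\cap\Pi_i\subset\Pi_j\cap\Pi_i=\{p_{ij}\}$, and similarly for $\ell_k$. So $\ell_j,\ell_k\subset Q_{\bar\Lambda_i}$, and they meet at $p_{jk}$.

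Next, $\w\Pi_{123}\subset\bar\Lambda_i$: the points $p_{ij},p_{ik},p_{jk}$ all lie in $D_{ij}\cap D_{ik}$. Now $\w\Pi_{123}$ is a plane in the $3$-space $\bar\Lambda_i$ and, by \textbf{G2}, it is not contained in $Z$. So $Z\cap\w\Pi_{123}$ is a plane cubic, namely
\[
Z\cap\w\Pi_{123}=(\Pi_i\cap\w\Pi_{123})\cup(Q_{\bar\Lambda_i}\cap\w\Pi_{123})=\ell_i\cup(\text{a conic}).
\]
The conic contains $\ell_j\cup\ell_k$, so the conic equals $\ell_j\cup\ell_k$.

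It remains to show that $Q:=Q_{\bar\Lambda_i}$ is smooth. A quadric surface containing two intersecting lines is either smooth, a cone, or reducible.
\begin{itemize}
\item \emph{Reducible case.} If $Q$ contains a plane $\Sigma$, then $\Sigma\subset Z\cap\bar\Lambda_i$. The plane $\Sigma$ cannot be $\w\Pi_{123}$ by \textbf{G2}. So $Q\cap\w\Pi_{123}=\ell_j\cup\ell_k$ forces $\Sigma\cap\w\Pi_{123}\in\{\ell_j,\ell_k\}$. Say $\Sigma\supset\ell_j$. Then the hyperplane section $\Sigma+\Pi_j$ of $Z$ contains two planes meeting along a line. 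I would rule this out using the smoothness of $Z$ together with \textbf{G0}; this is where I am least sure.
\item \emph{Singular irreducible case.} I would argue more directly. A cubic fourfold is smooth along $\Pi_i$ only if the map $\ol(-1)^{\oplus3}\to\ol$ given by the partial derivatives of the normal directions is surjective. Singular points of the residual quadric $Q_\Lambda$ occur exactly where $\Lambda$ is tangent to $Z$, or along $Q_\Lambda\cap\Pi_i$ at points where the discriminant degenerates. I would compare with the discriminant conic of the quadric bundle of Lemma \ref{quadric-bundle}. Quadric-bundle fibres over $\pr^2$ degenerate along a sextic, so I need $\bar\Lambda_i$ off that sextic. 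Alternatively, I would find a singular point of $Q$ on $\w\Pi_{123}$, which must then be $p_{jk}$. A cone with vertex $p_{jk}$ containing $\ell_j,\ell_k$ would make $T_{p_{jk}}Z\supset\bar\Lambda_i$. Combined with $T_{p_{jk}}Z\supset\Pi_j+\Pi_k=D_{jk}$, this gives $T_{p_{jk}}Z\supset D_{jk}+\bar\Lambda_i=\pr^5$, contradicting smoothness of $Z$.
\end{itemize}
This tangent-space argument is the key trick. I expect it to handle the vertex cases and to force the singular locus away from $\w\Pi_{123}$. A general singular point of $Q$ elsewhere still needs care; I would first try showing the singular point of a cone must lie on every line through it, forcing it onto $\ell_j\cap\ell_k=p_{jk}$.

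\emph{Item ii.} Suppose $\ell\subset Z$ meets each $\Pi_i$.
\begin{itemize}
\item \emph{Distinct intersection points.} By item i we have $\ell\subset\w\Pi_{123}\cap Z=\ell_1\cup\ell_2\cup\ell_3$, so $\ell=\ell_i$ for some $i$. But $\ell_i$ does not meet $\Pi_j$, $j\ne i$, except at $p_{ij}$, and $p_{ij}\in\ell_i$. So the intersection points are not distinct, contradicting this case.
\item \emph{Coincident points.} Otherwise two points coincide, say $\ell\ni p_{12}$, and $\ell$ meets $\Pi_3$ at some $q_3$. Then $\ell\subset D_{12}$, hence $q_3\in\ell_3$. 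Since $\ell$ is a line of $Z$ meeting $\Pi_1$, it lies in a quadric fibre, and its span $\ell+\Pi_1$ is a $3$-plane. I would show that it equals $\bar\Lambda_1$ or use that $\ell\subset Q_{\Pi_1+\ell}$. Then I would use smoothness of the quadric from item iii, or the tangent argument at $p_{12}$, to derive a contradiction.
\end{itemize}
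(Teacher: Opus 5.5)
\textbf{Overall.} Your item i and the identity $Q_{\bar\Lambda_i}\cap\w\Pi_{123}=\ell_j\cup\ell_k$ are correct and match the paper. The observation $T_{p_{jk}}Z\supseteq\Pi_j+\Pi_k=D_{jk}$, together with $D_{jk}+\bar\Lambda_i=\pr^5$ (by \textbf{G0}), is a good key idea. The paper goes the other way round: it proves ii first and then deduces smoothness of $Q_{\bar\Lambda_i}$ from ii.

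\textbf{Gap in iii.} As written, you have not proved that $Q:=Q_{\bar\Lambda_i}$ is smooth.
\begin{itemize}
\item Your planned contradiction in the reducible case cannot work. First, $\Sigma+\Pi_j$ is a $3$-plane, not a hyperplane. More importantly, a smooth cubic fourfold can contain two planes meeting along a line. For example, the Fermat cubic contains $\{x_0+x_1=x_2+x_3=x_4+x_5=0\}$ and $\{x_0+x_1=x_2+x_3=x_4+\omega x_5=0\}$, with $\omega$ a primitive cube root of unity, and these meet along a line.
\item What is missing is the fact that every singular $Q$ is singular at $p_{jk}$; your tangent argument then finishes. Suppose $Q$ is singular at $v$, so $Q$ is a cone with vertex $v$. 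If $v\in\w\Pi_{123}$, then $v\in Q\cap\w\Pi_{123}=\ell_j\cup\ell_k$. If moreover $v\neq p_{jk}$, say $v\in\ell_j$, then the lines joining $v$ to $\ell_k$ fill $\w\Pi_{123}$, contradicting \textbf{G2}. If $v\notin\w\Pi_{123}$, then $Q=(v+\ell_j)\cup(v+\ell_k)$, which is singular along $\overline{v\,p_{jk}}$.
\item This last case is exactly your reducible case. Since $Q\cap\w\Pi_{123}=\ell_j\cup\ell_k$, the two components separate $\ell_j$ and $\ell_k$, so both pass through $p_{jk}$.
\item To conclude: since $p_{jk}\notin\Pi_i$, the scheme $Z\cap\bar\Lambda_i$ coincides with $Q$ near $p_{jk}$. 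So if $Q$ is singular at $p_{jk}$, then $T_{p_{jk}}Z\supseteq\bar\Lambda_i+D_{jk}=\pr^5$, contradicting smoothness of $Z$.
\end{itemize}
No discriminant-sextic considerations are needed.

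\textbf{Gap in ii.} The coincident-points case rests on a false step.
\begin{itemize}
\item The line $\ell=\overline{p_{12}\,q_3}$ lies in $D_{12}$ only if $q_3\in D_{12}\cap\Pi_3=\ell_3$. You cannot assume this; the lines that must be excluded are precisely those with $q_3\notin\ell_3$.
\item For the same reason $\Pi_1+\ell$ is in general not $\bar\Lambda_1$, and what follows is only a plan.
\item The correct observation is that $\ell\subset\bar\Lambda_3=D_{13}\cap D_{23}$ in every case. Indeed $p_{12}\in\Pi_1\cap\Pi_2\subset D_{13}\cap D_{23}$ and $q_3\in\Pi_3$.
\item Since $p_{12}\notin\Pi_3$, this gives $\ell\subset Q_{\bar\Lambda_3}$. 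By iii this quadric is smooth, and it already contains the two lines $\ell_1,\ell_2$ through $p_{12}$. Hence $\ell\in\{\ell_1,\ell_2\}$. But $\ell_1$ meets $\Pi_1$, resp.\ $\ell_2$ meets $\Pi_2$, along a line, contrary to the hypothesis.
\end{itemize}

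With these two repairs, your order i $\Rightarrow$ iii $\Rightarrow$ ii gives a complete, self-contained proof. Compared with the paper, it replaces the cone/two-planes case analysis and quadric-bundle fibre argument used for ii by a single local computation of $T_{p_{jk}}Z$.
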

\begin{proof}
If $\ell$ intersects $\Pi_1\cup\Pi_2\cup\Pi_3$ in three distinct points, then $\ell\subset \Pi_i+\Pi_j=D_{ij}$.  Thus, the generality condition \textbf{G0} implies that $\ell$ belongs to $D_{12}\cap D_{13}\cap D_{23}=\w\Pi_{123}$: this gives item \emph{i.}

We show \emph{ii.}\ and \emph{iii.}\ simultaneously.
First, note that $Z\cap \w \Pi_{123}=\ell_1\cup\ell_2\cup\ell_3$ by the generality condition \textbf{G2}, and that $\w\Pi_{123}\subset\bar\Lambda_i$. Thus, we deduce from
$$
Z\cap\w\Pi_{123}=(Z\cap\bar\Lambda_i) \cap \w\Pi_{123}=(\Pi_i\cap\w\Pi_{123})\cup(Q_{\bar\Lambda_i}\cap\w\Pi_{123})=\ell_i\cup(Q_{\bar\Lambda_i}\cap\w\Pi_{123})
$$
that $Q_{\bar\Lambda_i}\cap \w \Pi_{123}=\ell_j\cup\ell_k$.
To prove \emph{ii.}, it remains to show that $Z$ does not contain lines of type $\overline{p_{ij}\,q_k}$ for any $q_k\in \Pi_k\setminus \overline{p_{ik}\, p_{jk}}$: let us argue by contradiction and assume that a line $\ell:=\overline{p_{12}\,q_3}$ belongs to $Z$. It is clear from Remark \ref{quadric-bundle-geometry} that $\ell$ must be contained in a fibre of all three quadric bundles $\ph_i\colon Z_{\{i\}}\to \pr^2$, where $\sigma_{\{i\}} \colon Z_{\{i\}} \to Z$ is the blow-up along $\Pi_i$, that is $\ell\subset Q_{\Lambda_1}\cap Q_{\Lambda_2}\cap Q_{\Lambda_3}$ for some triple of $3$-planes $\Lambda_i$ containing $\Pi_i$.
Note that $p_{12}\in\Lambda_3$, so $\Lambda_3$ contains $\w\Pi_{123}$ and $\Lambda_3=\bar\Lambda_3$.
Moreover, $Q_{\bar\Lambda_3}$ contains the three lines $\ell,\, \ell_1,\,\ell_2$ passing through $p_{12}$, thus it is a singular quadric surface.
In particular, either $Q_{\bar\Lambda_3}$ is a quadric cone with vertex $p_{12}$ or it is the union of planes $\ell +\ell_1$ and $\ell +\ell_2$, and $Q_{\bar\Lambda_3}$ intersects $\Pi_3$ along a smooth conic through $\{p_{13},\,p_{23},\,q_3\}$ or along the lines $\overline{p_{13}\,q_3}\cup \overline{p_{23}\,q_3}$, respectively.
Each irreducible component of $Q_{\bar\Lambda_3}\setminus (\ell_1\cup\ell_2)$ is covered by lines through $p_{12}$ that intersect $\Pi_i$ at a point for all $i\in\{1,2,3\}$, hence $Q_{\bar\Lambda_3}$ is a fibre of $\ph_i\colon Z_{\{i\}}\to \pr^2$ for all $i\in\{1,2,3\}$.
We conclude that there exists a $3$-plane containing $\Pi_1\cup Q_{\bar\Lambda_3}$, thus containing $\Pi_1\cup \Pi_3$, which is impossible: this gives \emph{ii.} To conclude \emph{iii.}, note that if $Q_{\bar\Lambda_i}$ is singular, then it contains a line $\overline{p_{jk}\,q_i}$ as above, a contradiction.
\end{proof}

\begin{lemma}\label{4planes}
Let $\Pi_1,\dots, \Pi_4$ be as in Setup \ref{setup}, satisfying the generality condition \textbf{G0}. Then, $\Pi_i$ is disjoint from $\w\Pi_{abc}$ for $i\neq a,b,c$.
\end{lemma}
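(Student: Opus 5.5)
The plan is to work inside $\Pi_4$, where everything is planar. Up to relabelling, take $i=4$ and $\{a,b,c\}=\{1,2,3\}$, and write $\w\Pi:=\w\Pi_{123}$. As recalled in $\S$\ref{geom-conf}, $\w\Pi\subset D_{12}\cap D_{13}\cap D_{23}$, where $D_{jk}=\Pi_j+\Pi_k$. Hence
$$\Pi_4\cap\w\Pi\subseteq (\Pi_4\cap D_{12})\cap(\Pi_4\cap D_{13})\cap(\Pi_4\cap D_{23}),$$
and it suffices to show that this triple intersection is empty.

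\emph{Step 1.} I would first identify each factor as a line. By \textbf{G0} the planes $\Pi_1,\Pi_2,\Pi_4$ do not lie in a common hyperplane, so $\Pi_4\not\subset D_{12}$ and $\Pi_4\cap D_{12}$ is a line. This line contains $p_{14}\in\Pi_1\subset D_{12}$ and $p_{24}\in\Pi_2\subset D_{12}$, which are distinct points. Therefore $\Pi_4\cap D_{12}=\overline{p_{14}\,p_{24}}=\ell_4^{12}$. In the same way, $\Pi_4\cap D_{13}=\overline{p_{14}\,p_{34}}$ and $\Pi_4\cap D_{23}=\overline{p_{24}\,p_{34}}$.

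\emph{Step 2.} If $p_{14},p_{24},p_{34}$ are not collinear, these are the three sides of a genuine triangle in $\Pi_4\cong\pr^2$. Any two sides meet only at a vertex, and no vertex lies on the opposite side, so the three lines have no common point. Hence $\Pi_4\cap\w\Pi=\emptyset$.

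\emph{Step 3: the main obstacle.} The hard part is excluding that $p_{14},p_{24},p_{34}$ lie on a common line $L\subset\Pi_4$. Suppose they do. Then $L$ meets $\Pi_1,\Pi_2,\Pi_3$ in three distinct points, so by the argument of Lemma \ref{lemma:geometry-3planes}\emph{ i.} (which uses only \textbf{G0}) we get $L\subset\w\Pi$.

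Next I would show that $\Pi_1\cap\w\Pi=\ell_1=\overline{p_{12}\,p_{13}}$. If instead $\Pi_1=\w\Pi$, then $\Pi_1\ni p_{23}$, and $\Pi_1\cap\Pi_2$ would contain the two distinct points $p_{12},p_{23}$, which is impossible. Since $p_{14}\in L\subset\w\Pi$, it follows that $p_{14}\in\ell_1$, and likewise $p_{24}\in\ell_2$ and $p_{34}\in\ell_3$. So $L$ would be a line of $\w\Pi$ meeting each side of the triangle $p_{12}p_{13}p_{23}$ in a point $p_{j4}$.

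I would then try to reach a contradiction with \textbf{G0} by showing that the hyperplane $D_{14}=\Pi_1+\Pi_4$ contains $\Pi_2$ or $\Pi_3$. The ingredients are as follows. $D_{14}$ contains both $\ell_1$ and $L$, which are distinct lines of $\w\Pi$ (note $p_{24}\notin\Pi_1$), so $D_{14}\supset\w\Pi$. Consequently $D_{14}\cap\Pi_2$ contains $p_{12}$, $p_{23}$ and $p_{24}$. I would then compare with $D_{24}$ and $D_{34}$, which contain $\w\Pi$ for the same reason. If this hyperplane bookkeeping does not close the argument, my fallback is a dimension count. All of $D_{14},D_{24},D_{34}$ would contain the $3$-plane $\Pi_4+\w\Pi$, while $D_{12},D_{13},D_{23}$ contain $\w\Pi$. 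Counting the hyperplanes through $\Pi_4+\w\Pi$ should force two of the six planes into a common hyperplane together with a third, contradicting \textbf{G0}.
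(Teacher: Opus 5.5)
Your Steps 1--2 are correct, and they take a slightly different route from the paper. The paper notes that if $\Pi_4\cap\w\Pi_{123}\neq\emptyset$, then the six points $p_{ij}$, which span $\w\Pi_{123}$ and $\Pi_4$, lie in one hyperplane. That hyperplane must then contain all four planes, contradicting \textbf{G0}. You instead cut $\Pi_4$ with $D_{12},D_{13},D_{23}$ and obtain the three sides of a triangle. Your route only needs $\Pi_4$ to be spanned by its three points, whereas the paper's uses this for every $\Pi_i$.

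Both routes need $p_{14},p_{24},p_{34}$ to be non-collinear. The paper uses this tacitly when it says $\Pi_4$ is generated by these three points. The genuine gap is your Step 3: non-collinearity does not follow from \textbf{G0}, and without it the statement is false.

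Your Step 3 deductions ($L\subset\w\Pi$, $p_{j4}\in\ell_j$) do not lead to a contradiction. They describe exactly a configuration that really occurs. Take a plane $\w\Pi$, lines $\ell_1,\dots,\ell_4\subset\w\Pi$ with no three concurrent, and points $v_1,\dots,v_4\notin\w\Pi$ whose projections from $\w\Pi$ are four points of $\pr^2$ with no three collinear. Set $\Pi_i:=\ell_i+v_i$. Then:
\begin{itemize}
\item $\Pi_i+\Pi_j=\w\Pi+v_i+v_j$ is a hyperplane, so $\Pi_i\cap\Pi_j=\ell_i\cap\ell_j$, giving six distinct points;
\item $\Pi_i+\Pi_j+\Pi_k=\w\Pi+v_i+v_j+v_k=\pr^5$, so \textbf{G0} holds;
\item $\w\Pi_{123}=\w\Pi$, yet $\Pi_4\cap\w\Pi_{123}=\ell_4\neq\emptyset$.
\end{itemize}
In this configuration $D_{14},D_{24},D_{34}$ are just three distinct hyperplanes of the pencil through the $3$-plane $\Pi_4+\w\Pi$. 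So neither your hyperplane bookkeeping nor your dimension count can produce a contradiction.

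Bringing in the cubic does not help either. This configuration lies on the smooth Fermat cubic $\sum x_i^3=0$, with $\zeta$ a primitive cube root of unity and
$\w\Pi=\{x_0=-x_1,\,x_2=-x_3,\,x_4=-x_5\}$,
$\Pi_1=\{x_0=-x_3,\,x_1=-x_2,\,x_4=-x_5\}$,
$\Pi_2=\{x_0=-x_1,\,x_2=-x_5,\,x_3=-x_4\}$,
$\Pi_3=\{x_0=-\zeta^2x_5,\,x_1=-\zeta^2x_4,\,x_2=-x_3\}$,
$\Pi_4=\{x_0=-\zeta x_3,\,x_1=-\zeta x_2,\,x_4=-x_5\}$.
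These $\Pi_i$ meet $\w\Pi$ in four lines with no three concurrent, and their projections from $\w\Pi$ are in general position.

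So what is missing is a hypothesis, not a cleverer argument. Non-collinearity of $p_{14},p_{24},p_{34}$ is exactly \textbf{G1} for $\Pi_4$. It holds wherever the lemma is applied: Lemma \ref{lem:planes-tildaV} and Proposition \ref{expected-dim} both assume \textbf{G0} and \textbf{G1}. With that hypothesis added, drop Step 3; your Steps 1--2 then form a complete proof.
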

\begin{proof}
Suppose $\w\Pi_{123}\cap \Pi_4\neq\emptyset$. Since $\w\Pi_{123}$ is generated by $\{p_{12},\, p_{13},\, p_{23}\}$ and $\Pi_4$ is generated by $\{p_{14},\, p_{24},\, p_{34}\}$, we have that the six points $\{p_{ij}\}$ are linearly dependent, i.e.\ $\Pi_1,\dots, \Pi_4$ are contained in a common hyperplane, a contradiction.
\end{proof}

\section{The blow-up of the cubic fourfold along planes}\label{fano}
In this section we prove the main results of the paper, that is Theorems \ref{main:fano} and \ref{main:cone}.
This is done in three steps: in $\S$\ref{3} we show Theorem \ref{main:fano} for $m=3$, in $\S$\ref{sub:cremona} we prove the existence of the Cremona-like transformation from Theorem \ref{main:cremona}, and we treat the case $m \leq 8$ in $\S$\ref{8}.

In particular, $\S$\ref{8} is devoted to the proof of Theorem \ref{m=8}: we establish the correspondence between $\sigma \colon Z_m\to Z$ and the blow-up $g\colon S_m \to \pr^2$ along $m$ points in general position, and we show that $Z_m$ and $S_m$ share the same birational geometry.

Notation as in Section \ref{ssetup}.

\subsection{The blow-up along three planes}\label{3}
\begin{proposition}\label{m=3}
Let $\Pi_1,\, \Pi_2,\, \Pi_3\subset Z$ be as in Setup \ref{setup}, satisfying the generality conditions $\boldsymbol{(\star)}$. Then, the blow-up $\sigma \colon Z_3 \to Z$ along $\Pi_1\cup \Pi_2\cup \Pi_3$ is a smooth Fano fourfold.
\end{proposition}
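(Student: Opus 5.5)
We prove that $-K_{Z_3}\simeq 3H-E_1-E_2-E_3$ is ample by showing it is semiample and strictly nef, and then applying Lemma \ref{ampleness}.

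\emph{Semiampleness.} Write
$$-K_{Z_3}\simeq (H-E_1)+(H-E_2)+(H-E_3)$$
and also $-K_{Z_3}\simeq (2H-E_1-E_2)+(H-E_3)$. Each $H-E_i$ is the pullback of the globally generated divisor from Lemma \ref{quadric-bundle} under $Z_3\to Z_{\{i\}}$, so it is globally generated. Hence $-K_{Z_3}$ is globally generated.

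\emph{Strict nefness.} Let $C\subset Z_3$ be an irreducible curve with $-K_{Z_3}\cdot C=0$. Since each summand is nef, $(H-E_i)\cdot C=0$ for $i=1,2,3$. Thus $C$ is contracted by all three quadric bundles $\ph_i\circ\pi_{\{i\}}\colon Z_3\to\pr^2$. We split into cases according to $\sigma(C)$.

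If $\sigma(C)$ is a point, then $C$ lies in a fibre of some $E_i\to P_i$, or in some $E_i\cap E_j\cong\PP$. In both cases $H\cdot C=0$ and $-E_i\cdot C>0$ for some $i$, by \eqref{intersection} and the description of the rulings. This gives $-K_{Z_3}\cdot C>0$, a contradiction.

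Otherwise $\Gamma:=\sigma(C)$ is a curve with $H\cdot\Gamma=E_i\cdot C$ for every $i$. By Remark \ref{quadric-bundle-geometry}, $\Gamma$ lies in a quadric $Q_{\Lambda_i}$ for each $i$, and the components of these fibres contracted by $H-E_i$ are covered by lines meeting $\Pi_i$. We aim to show $\Gamma$ is a line in $Z$ meeting each $\Pi_i$.

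For this we use that $(H-E_i)\cdot C=0$ for all three $i$. Then $\Gamma$ is contained in three quadric surfaces lying in $3$-planes $\Lambda_1,\Lambda_2,\Lambda_3$ with $\Pi_i\subset\Lambda_i$. Comparing degrees, $\deg\Gamma$ equals the intersection multiplicity of $\Gamma$ with each $\Pi_i$. We then intersect the three $3$-planes: $\Lambda_i\cap\Lambda_j$ is at least a line, and by \textbf{G0} the total intersection is small. This should force $\Gamma$ to be a line meeting $\Pi_1$, $\Pi_2$ and $\Pi_3$.

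Lemma \ref{lemma:geometry-3planes}(ii) then says no such line lies in $Z$. This contradicts $\Gamma\subset Z$, so $-K_{Z_3}$ is strictly nef and $Z_3$ is Fano.

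\emph{Main obstacle.} The hardest step is the last reduction: showing that a curve contracted simultaneously by the three quadric bundles must be a line meeting all three planes. This includes curves through the points $p_{ij}$, where $\sigma^{-1}(p_{ij})$ complicates multiplicities.

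The first approach is dimensional. Fibres of $(\ph_1,\ph_2,\ph_3)\colon Z_3\to(\pr^2)^3$ are intersections of quadric surfaces inside $\Lambda_1\cap\Lambda_2\cap\Lambda_3$. By \textbf{G0} this intersection is at most a line, except in special configurations such as $\bar\Lambda_i\supset\w\Pi_{123}$. Those special cases are handled by Lemma \ref{lemma:geometry-3planes}(iii) together with \textbf{G2}: the quadric $Q_{\bar\Lambda_i}$ is smooth and meets $\w\Pi_{123}$ only in $\ell_j\cup\ell_k$. On the strict transforms of these lines one checks $-K_{Z_3}$ is positive directly, since each meets the other two exceptional divisors in a single point.
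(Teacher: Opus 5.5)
You have the same skeleton as the paper: global generation of $-K_{Z_3}\simeq\sum_i(H-E_i)$, Lemma~\ref{ampleness}, and a curve $C$ with $(H-E_i)\cdot C=0$ for all $i$, so that $\sigma(C)\subset Q_{\Lambda_1}\cap Q_{\Lambda_2}\cap Q_{\Lambda_3}$. Your treatment of the case where $\sigma(C)$ is a point is fine. However, the decisive step is left as ``this should force'', and the way you propose to close it fails.

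\textbf{The lines $\ell_i$ are not excluded.} Lemma~\ref{lemma:geometry-3planes}\emph{ii.} only concerns lines meeting \emph{each} $\Pi_i$ in a point. The lines $\ell_1,\ell_2,\ell_3$ lie in $Z$ (inside $\Pi_1,\Pi_2,\Pi_3$) and meet all three planes, so they are exactly the cases that still have to be excluded.

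\textbf{Your direct check on them does not work.} Since $\ell_i\subset\Pi_i$ is blown up, the relevant curves are curves $C\subset E_i$ lying over $\bar\ell_i\subset P_i$. For a section one has $H\cdot C=E_j\cdot C=E_k\cdot C=1$, hence $-K_{Z_3}\cdot C=1-E_i\cdot C=(H-E_i)\cdot C$. This vanishes exactly under your standing assumption. A curve over $\bar\ell_i$ with $-K_{Z_3}\cdot C=0$ exists if and only if $\ma N^*_{\Pi_i/Z}|_{\ell_i}\cong\ol_{\pr^1}(1)\oplus\ol_{\pr^1}(-1)$, i.e.\ $\ell_i$ is of second type (Remark~\ref{re:line-types}). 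So positivity is not automatic; this is precisely where \textbf{G2} must be used. You cannot quote Corollary~\ref{co:line-types} here, because it is deduced from this proposition.

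\textbf{The missing idea is a tangency argument.}
\begin{enumerate}
\item Since $\sigma(C)\subset\Lambda_1\cap\Lambda_2\cap\Lambda_3$ and the $\Lambda_i$ are distinct, this intersection is a line or a plane. If it is a plane, it must be $\w\Pi_{123}$ by Lemma~\ref{lemma:geometry-3planes}\emph{i.}
\item If it is a line, it equals $\sigma(C)\subset Z$, and Lemma~\ref{lemma:geometry-3planes}\emph{ii.} forces it to be some $\ell_i$, say $\ell_1$.
\item Then $\ell_1\subset\Pi_1\cap Q_{\Lambda_1}$, so $\Lambda_1\cap Z$ is singular along $\ell_1$, i.e.\ $\Lambda_1$ is tangent to $Z$ along $\ell_1$. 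Hence $\Lambda_1\subset T_{p_{12}}Z\cap T_{p_{13}}Z=D_{12}\cap D_{13}\supset\w\Pi_{123}$.
\item Since $\Lambda_2=\Pi_2+\ell_1$ and $\Lambda_3=\Pi_3+\ell_1$ also contain $\w\Pi_{123}$, the triple intersection is a plane, contradicting step 2.
\item In the plane case $\Lambda_i=\bar\Lambda_i$. Lemma~\ref{lemma:geometry-3planes}\emph{iii.}, which is where \textbf{G2} enters, gives $Q_{\bar\Lambda_1}\cap Q_{\bar\Lambda_2}\cap Q_{\bar\Lambda_3}\cap\w\Pi_{123}=\{p_{12},p_{13},p_{23}\}$. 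Since $\sigma(C)$ lies in this finite set, no curve survives, and nothing remains on $\ell_j\cup\ell_k$ that would need a separate check.
\end{enumerate}
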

\begin{proof}
Set $X:=Z_3$.
Recall from Setup \ref{setup} that $\sigma \colon X \to Z$ has three factorizations
$$X \xrightarrow{\pi_{\{i\}}} Z_{\{i\}} \xrightarrow{\sigma_{\{i\}}} Z$$
where $\sigma_{\{i\}} \colon Z_{\{i\}} \to Z$ is the blow-up along $\Pi_i$ and $\pi_{\{i\}} \colon X \to Z_{\{i\}}$ is the blow-up along the strict transform of the remaining planes.
By Lemma \ref{quadric-bundle}, $Z_{\{i\}}$ admits a quadric bundle $\ph_i\colon Z_{\{i\}}\to\pr^2$ induced by the divisor $\sigma_{\{i\}}^*\ol_Z(1)-E_i$, equivalent to the strict transform of a general hyperplane section of $Z\subset \pr^5$ containing $\Pi_i$.
It follows that
$$H_i:=\pi_{\{i\}}^*(\sigma_{\{i\}}^*\ol_Z(1)-E_i)\simeq H-E_i$$
is a globally generated divisor on $X$ supporting the contraction $\omega_i:=\ph_i\circ\pi_{\{i\}}\colon X \to \pr^2$, that is the blow-down of $\{E_j\}_{j\neq i}$ followed by $\ph_i$.
Then, \eqref{anticanonical} yields
\begin{equation}\label{eq_m=3}
-K_X \simeq H_1+H_2+H_3
\end{equation}
and we deduce that $-K_X$ is globally generated, thus semiample.
By Lemma \ref{ampleness}, it suffices to show that $-K_X$ is strictly nef.

Assume that there exists an irreducible curve $C\subset X$ such that $-K_X\cdot C=0$, so $H_i$ vanishes along $C$ by \eqref{eq_m=3} and $C$ is contracted by $\omega_i$ for all $i\in\{1,2,3\}$.
The curves contracted by $\pi_{\{i\}}$ have positive anticanonical degree, hence $\pi_{\{i\}}(C)$ is an irreducible curve contained in a $\ph_i$-fibre for all $i\in\{1,2,3\}$.
It follows from Remark \ref{quadric-bundle-geometry} that $\sigma(C)$ is an irreducible curve belonging to $Q_{\Lambda_1}\cap Q_{\Lambda_2}\cap Q_{\Lambda_3}$ for some triple of $3$-planes $\Lambda_i$ containing $\Pi_i$, and so $\Lambda_1\cap\Lambda_2\cap\Lambda_3$ is at least one-dimensional.
Note that $\dim(\Lambda_1\cap\Lambda_2\cap\Lambda_3)\leq2$ since the $\Lambda_i$'s cannot coincide, and recall that $Z\cap \w \Pi_{123}=\ell_1\cup\ell_2\cup\ell_3$ by the generality condition \textbf{G2}.

\emph{First case: $\Lambda_1\cap\Lambda_2\cap\Lambda_3$ is a line.}
If $\Lambda_1\cap\Lambda_2\cap\Lambda_3$ is a line $\ell$, then $\ell=\sigma(C)\subset Z$ intersects $\Pi_i$ for all $i\in\{1,2,3\}$, and Lemma \ref{lemma:geometry-3planes}\emph{ii.}\ implies that $\ell$ is one of the $\ell_i$'s, say $\ell_1$.
Note that $\Lambda_i=\Pi_i+\ell_1$ contains $\w\Pi_{123}$ for $i\in\{2,3\}$: we show that $\w\Pi_{123}\subset\Lambda_1$ as well, so that $\Lambda_1\cap\Lambda_2\cap\Lambda_3=\w\Pi_{123}$ gives a contradiction.
We have $\ell_1\subset \Pi_1\cap Q_{\Lambda_1}$, which implies that $\Lambda_1$ is tangent at $Z$ along $\ell_1$, and so $\Lambda_1$ is contained in $T_pZ$ for all $p\in \ell_1$.
Since we clearly have $T_{p_{ij}}Z=D_{ij}$ by construction, we deduce that $\Lambda_1=D_{12}\cap D_{13}\supset\w\Pi_{123}$.

\emph{Second case: $\Lambda_1\cap\Lambda_2\cap\Lambda_3$ is a plane.}
We have $\Lambda_1\cap\Lambda_2\cap\Lambda_3=\w \Pi_{123}$ by Lemma \ref{lemma:geometry-3planes}\emph{i.}, so $\Lambda_i=\bar\Lambda_i$ and $Q_{\bar\Lambda_i}\cap \w \Pi_{123}=\ell_j\cup\ell_k$ by Lemma \ref{lemma:geometry-3planes}\emph{iii.}
Thus,
$$\sigma(C)\subseteq Q_{\Lambda_1}\cap Q_{\Lambda_2}\cap Q_{\Lambda_3}=(Q_{\Lambda_1}\cap Q_{\Lambda_2}\cap Q_{\Lambda_3})\cap\w\Pi_{123} = \{p_{12},\, p_{13},\, p_{23}\},$$
a contradiction.
\end{proof}

\begin{remark}\label{re:line-types}
Let $\Pi_1,\, \Pi_2,\, \Pi_3\subset Z$ be as in Setup \ref{setup}. A line $\ell$ in $Z$ is said to be of \emph{first type} if there are no linear spaces $\Lambda\cong\pr^3$ that are tangent to $Z$ along $\ell$; otherwise $\ell$ is said to be of \emph{second type} (see \cite[Lemma 6.7]{CG72}).
If $\ell_i$ is of first type (resp.\ of second type), then $\ma N_{\ell_i/Z}^*\cong\ol_{\pr^1}^{\oplus 2}\oplus\ol_{\pr^1}(-1)$ (resp.\ $\ol_{\pr^1}(1)\oplus\ol_{\pr^1}(-1)^{\oplus 2}$) by \cite[Proposition 6.19]{CG72} and it follows that ${\ma E}_{|\ell_i}:=(\ma N^*_{\Pi_i\slash Z})_{|\ell_i}\cong\ol_{\pr^1}^{\oplus 2}$ (resp.\ $\ol_{\pr^1}(1)\oplus\ol_{\pr^1}(-1)$).
\end{remark}

Proposition \ref{m=3}, together with Lemmas \ref{condition:3bis} and \ref{condition:3}, shows that $Z_3$ is Fano if and only if the generality conditions \textbf{G0} and \textbf{G2} hold, and we deduce the following.
\begin{corollary}\label{co:line-types}
Let $\Pi_1,\, \Pi_2,\, \Pi_3\subset Z$ be as in Setup \ref{setup}. Then, the blow-up $\sigma \colon Z_3 \to Z$ along $\Pi_1\cup \Pi_2\cup \Pi_3$ is Fano if and only if the lines $\{\ell_1,\,\ell_2,\,\ell_3\}$ are of first type.
\end{corollary}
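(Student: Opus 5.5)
We have to prove Corollary \ref{co:line-types}: $Z_3$ is Fano if and only if $\ell_1,\ell_2,\ell_3$ are of first type. By Proposition \ref{m=3} and Lemmas \ref{condition:3bis}, \ref{condition:3}, $Z_3$ is Fano if and only if \textbf{G0} and \textbf{G2} hold (\textbf{G1} is automatic for $m=3$). So it suffices to prove that \textbf{G0} and \textbf{G2} hold if and only if all $\ell_i$ are of first type. I would prove both directions by contrapositive, translating ``second type'' into the existence of a $3$-plane $\Lambda\supset\ell_i$ tangent to $Z$ along $\ell_i$.

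\emph{First type $\Rightarrow$ Fano.} Assume \textbf{G0} fails, say $\Pi_1,\Pi_2,\Pi_3\subset D$ for a hyperplane $D$. Then $D=D_{12}=D_{13}=D_{23}$ is tangent to $Z$ at every $p_{ij}$, since $T_{p_{ij}}Z=\Pi_i+\Pi_j$. Consider the hyperplane section $Y:=Z\cap D$, which contains the three planes and is singular at the three points $p_{ij}$. I would then find a $3$-plane in $D$ through $\ell_3=\overline{p_{13}p_{23}}$ tangent to $Z$ along $\ell_3$. The natural candidate is $\Lambda:=\Pi_3+T$, where $T$ is a suitable tangent direction. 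Concretely, $T_pZ\cap D$ for $p\in\ell_3$ is a pencil of hyperplanes of $D$ containing $\Pi_3$ (since $T_pZ\supset\Pi_3$). This pencil degenerates at $p_{13}$ and $p_{23}$ where $T_pZ=D$. Hence the intersection of the $T_pZ$ over $p\in\ell_3$ should contain a $3$-plane, giving second type. Now assume \textbf{G2} fails, i.e.\ $\w\Pi_{123}\subset Z$. Then $\Lambda:=\Pi_1+\w\Pi_{123}$ is a $3$-plane containing $\ell_1$ and is contained in $D_{12}\cap D_{13}$. At a point $p\in\ell_1$, the space $T_pZ$ contains both $\Pi_1$ and $\w\Pi_{123}$, because both lie in $Z$ and pass through $p$. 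Therefore $T_pZ\supset\Lambda$, so $\ell_1$ is of second type.

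\emph{Fano $\Rightarrow$ first type.} Assume $\ell_1$ is of second type with tangent $3$-plane $\Lambda$. First I show $\Lambda\supset\Pi_1$. Indeed, $T_pZ\supset\Pi_1$ for all $p\in\ell_1$, and the $T_pZ$ move in a family whose intersection I expect to be exactly $\Lambda$. Alternatively, I would use Remark \ref{re:line-types}: $\ma E_{|\ell_1}\cong\ol(1)\oplus\ol(-1)$. Then $-K_{Z_3}$ restricted to the strict transform of $\ell_1$ in $E_1$ over the section corresponding to $\ol(-1)$ has degree $-K_{P_1}\cdot\bar\ell_1+(-1)=1-1=0$. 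Here $-K_{Z_3|E_1}$ is the tautological class of $-K_{P_1}\otimes\mu_1^*\ma E$, as in Lemma \ref{condition:m}, and $\bar\ell_1$ is the $(-1)$-curve in $P_1$. This curve has degree zero, contradicting ampleness. This second argument is the cleanest: it directly shows that Fano forces first type, via the section of $\pr(\mu_1^*\ma E\otimes\ol(-K_{P_1}))$ over $\bar\ell_1$ coming from the quotient $\ol(-1)$.

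For the converse, I would likewise try to use the splitting directly. If \textbf{G0} or \textbf{G2} fails, the proofs of Lemmas \ref{condition:3bis} and \ref{condition:3} produce curves $\wi\ell$ lying over $\ell_i$ with $-K_{Z_3}\cdot\wi\ell=0$. A curve of zero degree inside $E_i|_{\bar\ell_i}\cong\pr(\ma E_{|\ell_i}\otimes\ol(1))$ must be a section with tautological degree $0$. This forces $\ma E_{|\ell_i}$ to have a quotient $\ol(-1)$; since $\det\ma E_{|\ell_i}$ is trivial, that means $\ma E_{|\ell_i}\not\cong\ol^{\oplus2}$, i.e.\ second type. The main obstacle is checking that the zero-degree curve from Lemma \ref{condition:3} lies in $E_i$ over $\bar\ell_i$: there the trivial curve is in $\w P$ and meets $E_i$ along the line over $\ell_i$, so I would use that line $\w P\cap E_i$, which has degree $0$.
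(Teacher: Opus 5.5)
Your proposal is correct and follows the paper's proof. You use $\text{Fano}\Leftrightarrow\textbf{G0}\wedge\textbf{G2}$. When $\ell_i$ is of second type, the splitting $(\mu_i^*\ma E\otimes\ol(-K_{P_i}))_{|\bar\ell_i}\cong\ol_{\pr^1}(2)\oplus\ol_{\pr^1}$ gives a $-K_{Z_3}$-trivial curve. When \textbf{G2} fails, the $3$-plane $\Pi_i+\w\Pi_{123}$ is tangent to $Z$ along $\ell_i$. When \textbf{G0} fails, the $-K_{Z_3}$-trivial curve $\wi\ell_3\subset\pi_{\{1,2\}}^{-1}(\bar\ell_3)$ from Lemma \ref{condition:3bis} (your last paragraph) settles it.

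Your Gauss-map sketch for the \textbf{G0} case is only heuristic as written. It closes once you note that $p\mapsto T_pZ\cap D$ on $\ell_3$ is given by two binary quadrics that both vanish at the distinct points $p_{13},p_{23}$, hence is constant, so that fixed $3$-plane of $D$ is tangent to $Z$ along $\ell_3$. This gives an elementary substitute for Lemma \ref{condition:3bis}.
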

\begin{proof}
Set $X:=Z_3$.
It follows by adjunction that ${-K_X}_{|E_i}$ is the tautological divisor of $\pr_{P_i}(-K_{P_i}\otimes \mu_i^*\ma E)$.
If $\ell_i$ is of first type, the restriction of $-K_X$ to $\pi_{\{j, k\}}^{-1}(\overline \ell_i)$, where $\bar\ell_i$ is the strict transform of $\ell_i$ in $Z_{\{j, k\}}$, is the tautological divisor of $\pr_{\bar\ell_i}(\ol_{\pr^1}(1)^{\oplus 2})$, which is ample; otherwise, it is the tautological divisor of $\pr_{\bar\ell_i}(\ol_{\pr^1}(2)\oplus\ol_{\pr^1})$ and it is not ample.

If the generality condition \textbf{G0} does not hold, then we see from the proof of Lemma \ref{condition:3bis} that $-K_X\cdot\wi\ell_i=0$, a contradiction to $\ell_i$ being of first type since $\wi\ell_i\subset \pi_{\{j, k\}}^{-1}(\overline \ell_i)$.

If the generality condition \textbf{G2} does not hold, then the intersection of $\bar\Lambda_i:=D_{ij}\cap D_{ik}$ and $Z$ contains $\Pi_i\cup\w\Pi_{123}$, thus $\bar \Lambda_i$ is tangent to $Z$ along $\ell_i=\Pi_i\cap\w\Pi_{123}$, i.e.\ $\ell_i$ is of second type.
\end{proof}

\subsection{A Cremona-like transformation.}\label{sub:cremona}
Let $\Pi_1,\, \Pi_2,\, \Pi_3\subset Z$ be as in Setup \ref{setup}, satisfying the generality conditions $\boldsymbol{(\star)}$, and let $\sigma \colon Z_3 \to Z$ be the blow-up along $\Pi_1\cup \Pi_2\cup \Pi_3$.
Then, $Z_3$ is Fano by Proposition \ref{m=3}. 

We factor $\sigma \colon Z_3 \to Z$ as
$$Z_3 \xrightarrow{\pi_{\{i,j\}}} Z_{\{i,j\}} \xrightarrow{\sigma_{\{i,j\}}} Z$$
where $\sigma_{\{i,j\}} \colon Z_{\{i,j\}} \to Z$ is the blow-up along $\Pi_i\cup \Pi_j$ and $\pi_{\{i,j\}} \colon Z_3 \to Z_{\{i,j\}}$ is the blow-up along the strict transform $P_k$ of the remaining plane $\Pi_k$, $i< j <k \in\{1,2,3\}$.

Let $F_{ij}\subset Z_{\{i,j\}}$ be the exceptional divisor of the blow-up $\psi_{ij}\colon Z_{\{i,j\}} \to T_{ij}$ with centre $V_{ij}\cong\PP$ from Remark \ref{H_12}, that is the strict transform of the unique hyperplane section of $Z\subset\pr^5$ containing $\Pi_i\cup\Pi_j$.
Then, $2H-E_i-E_j$ (as a divisor in $Z_{\{i,j\}}$) is a supporting divisor of $\psi_{ij}$, $F_{ij}\cong \pr_{V_{ij}}(\ma N^*_{V_{ij}/T_{ij}})$ and ${\psi_{ij}}_{|F_{ij}}\colon F_{ij} \to V_{ij}$ is the natural projection.

The intersection of $F_{ij}$ and $P_k$ is the strict transform $\bar\ell_k\subset P_k$ of the line $\ell_k\subset \Pi_k$.
Thus, $(2H-E_i-E_j)\cdot \bar\ell_k=0$ and $\bar\ell_k\subset F_{ij}$ is a non-trivial fibre of $\psi_{ij}$; see Figure \ref{H_ij-E_k}.

Let now $H_{ij}\subset Z_3$ be the strict transform of $F_{ij}$. Since $F_{ij}$ intersects $P_k$ along a curve that is a fibre of its ruling over $V_{ij}$, we have
\begin{equation}\label{Hij}
H_{ij}\simeq \pi_{\{i,j\}}^*F_{ij}\simeq H-E_i-E_j\quad \text{and} \quad H_{ij}\cong\pr_{P_{ij}}(\eta_{ij}^*\ma N^*_{V_{ij}/T_{ij}}),
\end{equation}
where $\eta_{ij}\colon {P_{ij}}\to {V_{ij}}$ is the blow-up at a point, i.e.\ $P_{ij}\cong\Bl_{\{2\, pts\}}\pr^2$.
We run the two-rays-game for $\kappa_{ij}:=\psi_{ij}\circ{\pi_{\{i,j\}}}\colon Z_3 \to T_{ij}$, and we see that the contraction of $H_{ij}$ exists by \cite[Theorem 1.2]{Wisn91I}: it is a smooth blow-up $\varepsilon_{ij}\colon Z_3 \to M_{ij}$ with centre $P_{ij}\subset M_{ij}$.
We denote by $f_{ij}$ the general fibre of the projection ${\varepsilon_{ij}}_{|H_{ij}}\colon H_{ij}\to P_{ij}$.
$$\xymatrix{
& Z_3 \ar[ld]_{\pi_{\{i,j\}}} \ar[rd]^{\varepsilon_{ij}} \ar[dd]^{\kappa_{ij}} & \\
Z_{\{i,j\}}\ar[rd]_{\psi_{ij}} & & M_{ij} \ar[ld] \\
& T_{ij} & 
}$$

\begin{lemma}\label{H_ij:intersection}
Let $H_{ij}\subset Z_3$ as above. Then, $H_{ij}\cap H_{ik}\cong\PP$ with rulings generated by $f_{ij}$ and $f_{ik}$, and $H_{ij}\cap E_k\cong\PP$ with rulings generated by $f_{ij}$ and $f_{k}$, for all $i< j <k \in\{1,2,3\}$.
In particular,
\begin{gather*}
H_{ij}\cdot f_{ij}=-1, \quad H_{ij}\cdot f_{ik}=H_{ij}\cdot f_{jk}=0, \\
H_{ij}\cdot f_k=E_k\cdot f_{ij}=0, \quad H_{ij}\cdot f_i=E_i\cdot f_{ij}=1
\end{gather*}
and $H_{12}\cap H_{13}\cap H_{23}=\emptyset$.
\end{lemma}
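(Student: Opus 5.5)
The plan is to read off the two intersections from the geometry downstairs, then get the intersection numbers from the ruling structures and the formula \eqref{Hij}.

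\emph{The surface $H_{ij}\cap E_k$.} We work in $Z_{\{i,j\}}$. There $F_{ij}$ meets the strict transform $P_k$ exactly along $\bar\ell_k$, and $\bar\ell_k$ is a $\psi_{ij}$-fibre. After blowing up $P_k$, the surface $H_{ij}\cap E_k$ is the restriction of $E_k\to P_k$ over $\bar\ell_k$, namely $\pr_{\bar\ell_k}(\ma E_{|\ell_k})$. By Corollary \ref{co:line-types} the line $\ell_k$ is of first type, so Remark \ref{re:line-types} gives $\ma E_{|\ell_k}\cong\ol^{\oplus 2}$. Hence this surface is $\PP$. One ruling consists of fibres $f_k$. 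The other ruling consists of the lifts of $\bar\ell_k$; these are the fibres of $H_{ij}\to P_{ij}$, since $\bar\ell_k$ was a $\psi_{ij}$-fibre, so they are in class $f_{ij}$.

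\emph{The surface $H_{ij}\cap H_{ik}$.} Downstairs this is $D_{ij}\cap D_{ik}\cap Z=\bar\Lambda_i\cap Z=\Pi_i\cup Q_{\bar\Lambda_i}$. The quadric $Q_{\bar\Lambda_i}$ is smooth by Lemma \ref{lemma:geometry-3planes}\emph{iii.} The strict transforms of $H_{ij}$ and $H_{ik}$ separate along $\Pi_i$ after blowing up $\Pi_i$, so their intersection in $Z_3$ is the strict transform of $Q_{\bar\Lambda_i}$. This quadric meets $\Pi_i$ in a conic and $\Pi_j$, $\Pi_k$ in lines, and $\sigma$ restricted to it should be an isomorphism. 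I would check this by verifying that $Q_{\bar\Lambda_i}$ meets each blown-up plane in a Cartier divisor, so the strict transform is isomorphic to $Q_{\bar\Lambda_i}\cong\PP$. Its two rulings are the lines of $Q$. The lines in the ruling meeting $\Pi_j$ (the family containing $\ell_k$) are contracted by $\varepsilon_{ij}$, i.e.\ satisfy $(2H-E_i-E_j)\cdot=0$, so they have class $f_{ij}$; the other ruling similarly gives $f_{ik}$. The main obstacle is this identification of the rulings and classes: one must check that $H\cdot$ line $=1$, $E_i\cdot$ line $=1$, and $E_j\cdot$ line $=1$ or $0$ according to the ruling.

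\emph{Intersection numbers.} Write $f_{ij}\equiv h-e_i-e_j$-type class, i.e.\ $H\cdot f_{ij}=1$, $E_i\cdot f_{ij}=E_j\cdot f_{ij}=1$ and $E_k\cdot f_{ij}=0$. This holds because a fibre of $H_{ij}$ is a line meeting $\Pi_i$ and $\Pi_j$ once and avoiding $\Pi_k$, as $\bar\ell_k$ shows. Then $H_{ij}\simeq H-E_i-E_j$ from \eqref{Hij} gives $H_{ij}\cdot f_{ij}=1-1-1=-1$ and $H_{ij}\cdot f_{ik}=1-1-0=0$. Also $H_{ij}\cdot f_k=0-0-0=0$ and $H_{ij}\cdot f_i=0+1-0=1$, using \eqref{intersection}. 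The values $E_k\cdot f_{ij}=0$ and $E_i\cdot f_{ij}=1$ are the same numbers read the other way.

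\emph{Emptiness of the triple intersection.} We have $H_{12}\cap H_{13}\cap H_{23}\subset\sigma^{-1}(D_{12}\cap D_{13}\cap D_{23}\cap Z)=\sigma^{-1}(\ell_1\cup\ell_2\cup\ell_3)$ by G2. Now $H_{12}\cap H_{13}$ is the strict transform of $Q_{\bar\Lambda_1}$, which meets $\w\Pi_{123}$ only in $\ell_2\cup\ell_3$ by Lemma \ref{lemma:geometry-3planes}\emph{iii.} Its points over $\ell_2$ lie in $E_2$ via the $f_{12}$-type lifts, and $H_{23}\cap E_2$ is disjoint from these lifts; a local check shows their directions differ. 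Equivalently, intersecting the $\PP$ given by $H_{12}\cap H_{13}$ with $H_{23}$ gives the class $H_{23}\cdot f_{12}=0$ and $H_{23}\cdot f_{13}=0$, so the intersection is either empty or contains curves. Since no curve lies over a single point in the configuration, it is empty.
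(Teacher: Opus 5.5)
Your route is the paper's. You show $H_{ij}\cap E_k=\pi_{\{i,j\}}^{-1}(\bar\ell_k)\cong\PP$ because $\ell_k$ is of first type, and that $H_{ij}\cap H_{ik}$ is the strict transform $Q_i$ of the smooth quadric $Q_{\bar\Lambda_i}$. The intersection numbers then follow from $H_{ij}\simeq H-E_i-E_j$ once $f_{ij}$ has intersection numbers $(1,1,1,0)$ with $(H,E_i,E_j,E_k)$. The paper is in fact terser than you on these points.

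\textbf{The gap: the emptiness conclusion.} The last step does not work as written. From $H_{23}\cdot f_{12}=H_{23}\cdot f_{13}=0$, the line bundle $\ol_{Z_3}(H_{23})_{|Q_1}$ is trivial on $Q_1=H_{12}\cap H_{13}\cong\PP$. The correct dichotomy is therefore ``$Q_1\cap H_{23}=\emptyset$ or $Q_1\subset H_{23}$'', since a nonzero effective curve on $\PP$ is never numerically trivial. Your sentence about no curve lying over a single point does not address the second alternative.

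There are two ways to close it. First, $Q_1\subset H_{23}$ would force $Q_{\bar\Lambda_1}\subset D_{23}\cap\bar\Lambda_1=\w\Pi_{123}$, which contradicts Lemma \ref{lemma:geometry-3planes}\emph{iii.} Second, apply your opening sentence to all three pairs, as in the proof of Proposition \ref{m=3}:
\[
\sigma(H_{12}\cap H_{13}\cap H_{23})\subseteq Q_{\bar\Lambda_1}\cap Q_{\bar\Lambda_2}\cap Q_{\bar\Lambda_3}=\{p_{12},p_{13},p_{23}\}.
\]
Since $\sigma_{|Q_1}$ is an isomorphism, $Q_1\cap H_{23}$ is finite. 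It is cut on a surface by a Cartier divisor, so being finite it must be empty.

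\textbf{Two smaller slips.} In your ``local check'', the curve of $Q_1$ over $\ell_2$ has class $f_{13}$, not $f_{12}$. It lies in the ruling of $\ell_2$, whose other lines meet $\ell_3\subset\Pi_3$ and miss $\Pi_2$.

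Also, $E_k\cdot f_{ij}=0$ is not explained by $\bar\ell_k$, which lies in $\Pi_k$. It holds because the general $\psi_{ij}$-fibre misses $P_k$, as $F_{ij}\cap P_k=\bar\ell_k$ is a single fibre. Equivalently, the lifts of $\bar\ell_k$ have $E_k$-degree $0$ because $\ma E_{|\ell_k}$ is trivial.
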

\begin{proof}
Let $\bar\Lambda_i:=D_{ij}\cap D_{ik}$.
We have $\bar\Lambda_i\cap Z=\Pi_i\cup  Q_{\bar\Lambda_i}$, and $Q_{\bar\Lambda_i}$ is a smooth quadric surface with rulings generated by $\ell_j$ and $\ell_k$ by Lemma \ref{lemma:geometry-3planes}\emph{iii.}
Since $H_{ij}$ is the strict transform of $D_{ij}\cap Z$, we deduce that $H_{ij}$ intersects $H_{ik}$ along the strict transform $Q_{i}$ of $Q_{\bar\Lambda_i}$.
It follows that the rulings of $Q_{i}$ are generated by $f_{ij}$ and $f_{ik}$.

The statement about $H_{ij}$ and $E_k$ follows from $F_{ij}\cap P_k=\bar\ell_k$, and from the isomorphism ${\ma E}_{|\ell_i}\cong\ol_{\pr^1}^{\oplus 2}$ (see Remark \ref{re:line-types} and Corollary \ref{co:line-types}), so that $\pi_{\{i,j\}}^{-1}(\bar\ell_k)\cong\PP$.

Lastly, the statement about $H_{ij}$ and $E_i$ follows easily from $\Pi_i\subset D_{ij}\cap Z$.
\end{proof}
\begin{figure}[h]\caption{\small $F_{ij},P_k\subset Z_{\{i,j\}}$ and $H_{ij},E_k\subset Z_3$.}\label{H_ij-E_k}
\includegraphics[width=0.64\textwidth]{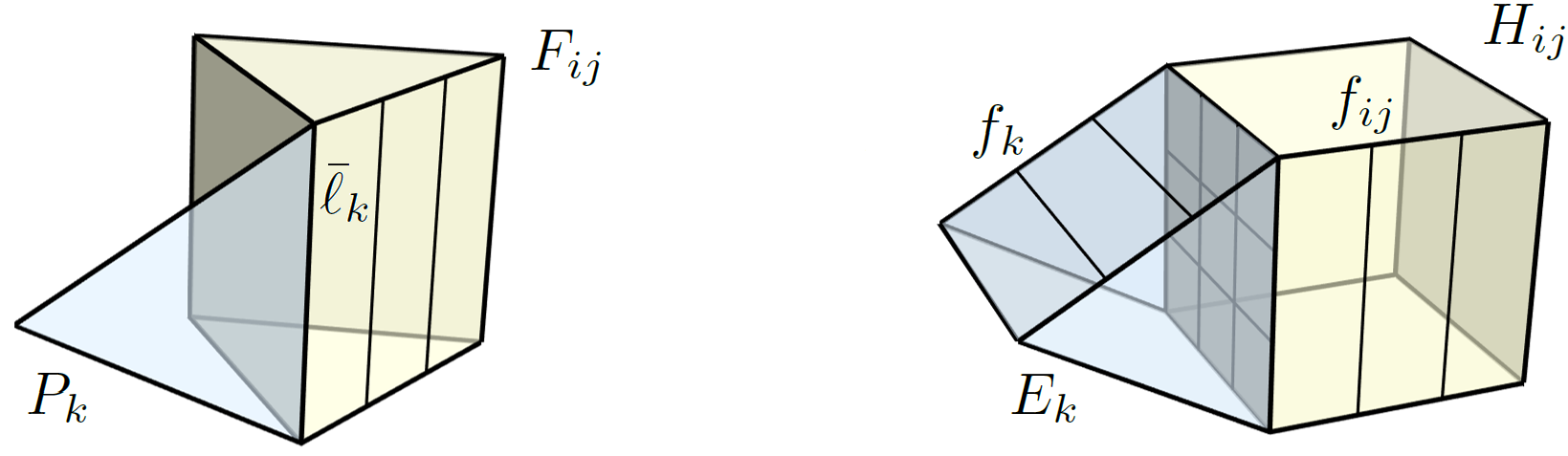}
\end{figure}

\begin{thm}\label{thm:cremona}
Let $\Pi_1,\, \Pi_2,\, \Pi_3\subset Z$ be as in Setup \ref{setup}, satisfying the generality conditions $\boldsymbol{(\star)}$, and let $\sigma \colon Z_3 \to Z$ be the blow-up along $\Pi_1\cup \Pi_2\cup \Pi_3$.
There exist a smooth cubic fourfold $Z'$ and a rational map $\zeta\colon Z\dashrightarrow  Z'$ such that $\sigma':=\zeta\circ\sigma\colon Z_3 \to Z'$ is the blow-up along planes $\Pi_1'\cup\Pi_2'\cup\Pi_3'\subset Z'$ as in Setup \ref{setup}, satisfying the generality conditions $\boldsymbol{(\star)}$.

Moreover, $\Exc(\sigma')=H_{12}\cup H_{13}\cup H_{23}$ and $(\sigma')^*\ol_{Z'}(1)\simeq2H-E_1-E_2-E_3$.
\end{thm}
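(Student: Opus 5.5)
The plan is to build $\sigma'$ by contracting the three divisors $H_{12},H_{13},H_{23}$ simultaneously. Then we identify the target with a cubic fourfold through the divisor $L:=2H-E_1-E_2-E_3$.

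\emph{Step 1: $L$ is the pull-back from the target.} Write $L=H_{ij}+H_{ik}+E_i$, for instance $L=(H-E_1-E_2)+(H-E_1-E_3)+E_1$. Using the intersection numbers of Lemma \ref{H_ij:intersection} and \eqref{intersection}:
\[ L\cdot f_{ij}=0, \qquad L\cdot f_i=1. \]
These follow from $2H\cdot f_i=0$, $E_i\cdot f_i=-1$, $E_j\cdot f_i=0$, and $H\cdot f_{ij}=1$. The last value holds because $f_{ij}$ is a line in $D_{ij}\cap Z$ meeting $\Pi_i$ and $\Pi_j$ once, so $(2H-E_i-E_j)\cdot f_{ij}=0$ and $E_k\cdot f_{ij}=0$.

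Next, $Z_3$ is Fano by Proposition \ref{m=3}, and each $\varepsilon_{ij}$ is a smooth blow-up with exceptional divisor $H_{ij}$. Since $H_{12}\cap H_{13}\cap H_{23}=\emptyset$ and the $H_{ij}\cap H_{ik}$ are $\PP$ with the prescribed rulings, the three rays $[f_{12}],[f_{13}],[f_{23}]$ span a face of $\NE(Z_3)$. Its contraction $\sigma'\colon Z_3\to Z'$ is the composition of three smooth blow-ups, in the same manner as Corollary \ref{blowup-planes}, with $\Exc(\sigma')=H_{12}\cup H_{13}\cup H_{23}$. Then $\rho(Z')=1$, and $L$, being $\sigma'$-trivial, descends to a divisor $A$ on $Z'$.

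\emph{Step 2: identify $Z'$.} Rewriting, $-K_{Z_3}=3H-\sum E_i=3L-(H_{12}+H_{13}+H_{23})$. This follows from $3(2H-\sum E_i)-(3H-2\sum E_i)$. So $-K_{Z'}=3A$, and $Z'$ is a smooth Fano fourfold of index $3$ (divisibility holds if $A$ is primitive, which follows from $L\cdot f_i=1$). Therefore $Z'$ is a del Pezzo fourfold. The degree $A^4=L^4$ is computed with Lemma \ref{conti}, applied three times via the factorization $Z_3\to Z_2\to Z_1\to Z$ and using $\det\ma E=\ol$, $c_2(\ma E)=3$, and the correction terms at the points $p_{ij}$. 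The expected answer is $L^4=16-3\cdot 6\cdot 4/\ldots=3$, that is, $16\cdot3$ minus the plane contributions. Degree $3$ forces $Z'$ to be a smooth cubic fourfold, with $|A|$ giving the embedding $Z'\subset\pr^5$. Consistently, $h^0(L)=h^0(\ma I_{\Pi_1\cup\Pi_2\cup\Pi_3/Z}(2))=6$, so the map $\zeta$ is the one defined by this linear system, as in Theorem \ref{main:cremona}.

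\emph{Step 3: the centres.} The images $\Pi'_k:=\sigma'(H_{ij})\cong P_{ij}$ blown down appropriately are surfaces with $A$-degree $1$: a fibre class of $\Pi'_k$ corresponds to $f_k$, which has $L\cdot f_k=1$, and the base $P_{ij}\cong\Bl_{2\,pts}\pr^2$ is contracted along the curves $H_{ij}\cap H_{ik}$. This step needs checking. A surface of degree $1$ in $\pr^5$ is a plane, so each $\Pi'_k$ is a plane. They meet pairwise at the single points $\sigma'(H_{ij}\cap H_{ik})$, since $H_{ij}\cap H_{ik}\cong\PP$ contracts to a point. By symmetry the $E_i$ play for $\sigma'$ the role the $H_{ij}$ play for $\sigma$. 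Finally, $Z_3$ is Fano, so Lemmas \ref{condition:3bis} and \ref{condition:3} (G1 being automatic for $m=3$) give that $\Pi_1',\Pi_2',\Pi_3'$ satisfy $\boldsymbol{(\star)}$.

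\emph{Main obstacle.} The hardest step is Step 1's claim that the three contractions glue to a single birational morphism onto a smooth variety of Picard rank $1$. The obstacle is controlling the face spanned by the $f_{ij}$ and excluding any further curves contracted by $L$. The first thing to try is to show that $L$ is nef and big and that $L^\perp\cap\NE(Z_3)$ is exactly that face. To do this, write $L$ as $H_i+H_{jk}$ plus an effective term. If that fails, use the decomposition $L=(H-E_i)+(H-E_j-E_k)$, where $H-E_i$ is globally generated and $H_{jk}$ is contracted only along $f_{jk}$.
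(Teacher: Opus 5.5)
Your route is the paper's: the same divisor $L=2H-E_1-E_2-E_3$ (the paper's $H'$), the identity $-K_{Z_3}\simeq 3L-H_{12}-H_{13}-H_{23}$ giving $-K_{Z'}\simeq 3A$, the degree $L^4=3$ from Lemma \ref{conti}, the classification of del Pezzo fourfolds, and Lemmas \ref{condition:3bis} and \ref{condition:3} for $\boldsymbol{(\star)}$.

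\textbf{The gap: existence of $\sigma'$.} Everything else rests on this step, and you do not prove it.
\begin{itemize}
\item In Step 1 you infer that $[f_{12}],[f_{13}],[f_{23}]$ span a face of $\NE(Z_3)$ from $H_{12}\cap H_{13}\cap H_{23}=\emptyset$ and the shape of the pairwise intersections. That does not follow: a face needs a nef divisor vanishing exactly on it.
\item You then descend $L$ along a morphism whose existence is precisely what nefness of $L$ would supply, so the argument is circular.
\end{itemize}
You flag this and name the right decomposition, but you stop before the argument, which is one line. For each $i$ we have $L\simeq (H-E_i)+H_{jk}$, and $H-E_i$ is nef, being pulled back from the quadric bundle of Lemma \ref{quadric-bundle}. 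So a curve with $L\cdot C<0$ has $H_{jk}\cdot C<0$, hence $C\subset H_{jk}$, for all three pairs $\{j,k\}$ at once. That places $C$ in $H_{12}\cap H_{13}\cap H_{23}=\emptyset$, a contradiction.

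Bigness then requires actually computing $L^4$, which your Step 2 does not do; the displayed expression is not a computation. Apply Lemma \ref{conti} three times with $D=2H,\ 2H-E_1,\ 2H-E_1-E_2$. Here $(D_{|S})^2=4,3,2$, $c_1=0$ and $c_2=3$, which gives $48\to 27\to 12\to 3$.

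Only at this point does $|nL|$ define a birational contraction $\sigma'$. It kills the three independent classes $[f_{ij}]$, so it factors through every $\varepsilon_{ij}$ and $\rho(Z')=1$. The logical order is therefore the reverse of yours: the face is a consequence of $L$ being nef, not an input.

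\textbf{Step 3.} This is imprecise: \emph{a fibre class of $\Pi'_k$} has no meaning. The idea is right, though, and it addresses something the paper leaves implicit, namely that the $\Pi'_k$ are \emph{linear} planes in $\pr^5$. The clean version is:
\begin{itemize}
\item $\Pi'_k\cong\pr^2$ is obtained from $P_{ij}\cong\Bl_{2\,pts}\pr^2$ by contracting the two disjoint $(-1)$-curves $\varepsilon_{ij}(H_{ij}\cap H_{ik})$ and $\varepsilon_{ij}(H_{ij}\cap H_{jk})$.
\item The $f_k$-ruling of $H_{ij}\cap E_k\cong\PP$ maps isomorphically onto the third $(-1)$-curve, the exceptional curve of $\eta_{ij}$, which meets both contracted curves and so becomes a line of $\Pi'_k$.
\item Then $L\cdot f_k=1$ gives $A_{|\Pi'_k}\simeq\ol_{\pr^2}(1)$.
\end{itemize}
With these repairs your argument coincides with the paper's proof.
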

\begin{proof}
Set $X:=Z_3$.
Recall that $X$ is Fano by Proposition \ref{m=3}, and set $H':=2H-E_1-E_2-E_3$.
We have that $H'$ is nef and big with $(H')^4=3$ by Step \ref{H'}, hence $|nH'|$ induces a birational contraction $\sigma' \colon X \to Z'$ for $n\gg0$.
We deduce from Lemma \ref{H_ij:intersection} that $H'\cdot f_{ij}=0$, so $H_{ij}$ is a component of $\Exc(\sigma')$ and $\sigma'$ factors through the contraction of $H_{ij}$ for all $i< j \in\{1,2,3\}$.
It follows from $\rho(X)=4$ that $\rho(Z')=1$ and that $\sigma' \colon X \to Z'$ is the contraction of $H_{12}\cup H_{13}\cup H_{23}$.
The $P_{ij}$'s are isomorphic to the blow-up of $\pr^2$ at two points, so $\Pi_k':=\sigma' (H_{ij})\cong\pr^2$, $\Pi_i'$ intersects $\Pi_j'$ along the point $\sigma'(H_{jk}\cap H_{ik})$, and $\sigma'$ is the blow-up along  $\Pi'_1\cup \Pi'_2\cup \Pi'_3$.
Moreover, each elementary blow-up in a factorization of $\sigma'$ is a smooth blow-up, thus $Z'$ is smooth.

Finally, we show in Step \ref{Z'} that $Z'$ is a cubic fourfold.
It follows that $\Pi_1',\,\Pi_2',\,\Pi_3'\subset Z'$ satisfy the generality conditions $\boldsymbol{(\star)}$ by Lemmas \ref{condition:3bis} and \ref{condition:3}, and the rational map $\zeta\colon Z\dashrightarrow Z'$ is realised as $\sigma'\circ \sigma^{-1}$.

\begin{step}\label{H'}
$H'$ is nef and big, and $(H')^4=3$.
\end{step}
\begin{proof}
We have $H'\simeq H_{i}+H_{jk}$, where $H_{jk}\simeq H-E_j-E_k$ from \eqref{Hij} and $H_{i}\simeq H-E_i$ is nef (see proof of Proposition \ref{m=3}). If there exists a curve $C$ such that $H'\cdot C<0$, then $H_{ij}\cdot C<0$ for all $i< j \in\{1,2,3\}$. Therefore, $C$ is contained in $H_{12}\cap H_{13}\cap H_{23}$, a contradiction by Lemma \ref{H_ij:intersection}.
Finally, $(H')^4=3$ by \eqref{chern_M} and Lemma \ref{conti} applied to the blow-up along $\Pi_1$, and subsequently along the strict transform of $\Pi_2$ and $\Pi_3$, with $D=2H$, $2H-E_1$, and $2H-E_1 - E_2$ respectively.
\end{proof}

\begin{step}\label{Z'}
$Z'$ is a cubic fourfold.
\end{step}
\begin{proof}
By the classification \cite[Theorem 3.3.1]{FanoEMS}, it suffices to show that $Z'$ is a smooth del Pezzo fourfold of degree 3. Since $X$ is Fano, we have that $Z'$ is rationally connected, so $\rho(Z')=1$ implies that $Z'$ is Fano as well.
We have
$$-K_X\simeq (\sigma')^*(-K_{Z'})-H_{12}-H_{13}-H_{23},$$
so \eqref{anticanonical} gives $(\sigma')^*(-K_{Z'})\simeq 3H'$. We deduce that $-K_{Z'}\simeq \ol_{Z'}(3)$, where $\ol_{Z'}(1)$ is the ample generator of $\Pic(Z')$, and so $Z'$ is a del Pezzo fourfold. We conclude from $(\sigma')^*\ol_{Z'}(1)\simeq H'$ and $(H')^4=3$.
\end{proof}
\end{proof}

\begin{remark}\label{re:cremona}
Let $m\geq3$ and $\Pi_1,\dots,\Pi_m\subset Z$ be as in Setup \ref{setup}, satisfying the generality conditions $\boldsymbol{(\star)}$, and let $\zeta_{ijk}\colon Z \dashrightarrow Z'$ be the rational map from Theorem \ref{thm:cremona} with respect to the planes $\{\Pi_i,\,\Pi_j,\,\Pi_k\}$, for $1\leq i<j<k\leq m$.
Then, $\zeta_{ijk}$ corresponds to the rational map induced by $|H^0(\ma I_{\Pi_i\cup\Pi_j\cup\Pi_k/Z}\otimes\ol_Z(2))|$, and it is the restriction to $Z$ of the Cremona transformation of $\pr^5$ defined by $|H^0(\ma I_{\Pi_i\cup\Pi_j\cup\Pi_k/\pr^5}\otimes\ol_{\pr^5}(2))|$.
In fact, \eqref{horace} applied to $W=\pr^5$, $X=\Pi_i\cup\Pi_j\cup\Pi_k$ and $D=Z$ yields
\begin{equation*}
0 \to \ol_{\pr^5}(-1)\to \ma I_{\Pi_i\cup\Pi_j\cup\Pi_k/\pr^5}\otimes\ol_{\pr^5}(2) \to\ma I_{\Pi_i\cup\Pi_j\cup\Pi_k/Z}\otimes\ol_Z(2) \to 0,
\end{equation*}
so that the restriction
$
H^0(\ma I_{\Pi_i\cup\Pi_j\cup\Pi_k/\pr^5}\otimes\ol_{\pr^5}(2)) \to H^0(\ma I_{\Pi_i\cup\Pi_j\cup\Pi_k/Z}\otimes\ol_Z(2))
$
is an isomorphism.
We call $\zeta_{ijk}$ a \emph{Cremona-like transformation} of $Z$ centred at $\Pi_i\cup\Pi_j\cup\Pi_k$.
$$\xymatrix{
& Z_m \ar[ld]_{\sigma} \ar[rd]^{\sigma'} & \\
Z\ar@{-->}[rr]_{\zeta_{ijk}} & & Z'
}$$

We now describe the geometry of $\zeta_{ijk}$: for simplicity, let us fix indices and consider $\zeta_{123}$. Then, $\sigma'\colon Z_m \to Z'$ is the blow-up along planes $\Pi_1'\cup\dots\cup\Pi_m'\subset Z$ as in Setup \ref{setup}, satisfying the generality conditions $\boldsymbol{(\star)}$, where $\Pi_i'=\sigma'(H_{jk})$ for $1\leq i<j<k\leq 3$.
It is not difficult to see that the restriction of $\zeta_{123}$ to $\Pi_s$ for $4\leq s \leq m$
$$
{\zeta_{123}}_{|\Pi_s} \colon \Pi_s \dashrightarrow \Pi_s'
$$
is the standard Cremona transformation of $\Pi_s \cong\pr^2$ centred at $\{p_{1s},\,p_{2s},\,p_{3s}\}$, that is the rational map induced by $|H^0(\ma I_{\{p_{1s},\,p_{2s},\,p_{3s}\}/\Pi_s}\otimes\ol_{\Pi_s}(2))|$.
\end{remark}

\subsection{The blow-up along $\boldsymbol{m\leq8}$ planes}\label{8}
Let $S_m$ be a del Pezzo surface of degree $9-m$ obtained as the blow-up $g\colon S_m\to \pr^2$ along $m\leq 8$ points $\{q_1, \dots, q_m\}$ in general position; we denote by $e_i$ the $g$-exceptional line over $q_i$, by $l\simeq g^*\ol_{\pr^2}(1)$ the transform of a general line, and by $l_{ij}$ the transform of the line $\overline{q_i\,q_j}$.
We refer to \cite{Dolg83, Dolg12} for an account of del Pezzo surfaces and Cremona isometries; here we collect the notions relevant to the current section.
\begin{definition}
A Cremona isometry on $S_m$ is an automorphism of $\Pic(S_m)$ that preserves the intersection form on $\Pic(S_m)$ and that fixes the anticanonical divisor $-K_{S_m}$.
\end{definition}
The group of Cremona isometries ${\rm Cris}({S_m})$ is generated by the automorphisms of $\Pic(S_m)$ induced by permutations of the blown-up points, together with the automorphisms \eqref{cremona:isometry} induced by the standard Cremona transformations of $\pr^2$ centred at $\{q_i, q_j, q_k\}$ when $m\geq3$.
\begin{equation}\label{cremona:isometry}
\begin{gathered}
\Pic({S_m}) \overset{\cong}{\longrightarrow} \Pic({S_m})\\
l \mapsto l'\simeq 2l-e_i-e_j-e_k\\
e_i \mapsto e_i':=l_{jk}\simeq l-e_j-e_k \\
e_t \mapsto e_t':=e_t \quad \forall t \neq i,j,k
\end{gathered}
\end{equation}
Once a presentation $g\colon S_m\to \pr^2$ is fixed, ${\rm Cris}({S_m})$ can be identified with a subgroup of the Cremona group ${\rm Bir}(\pr^2)$, see \cite[$\S8.2.8$]{Dolg12}.
Moreover, ${\rm Cris}({S_m})$ is naturally isomorphic to the Weyl group associated with the root system of $S_m$ \cite[Theorem 1]{Dolg83} and it acts transitively on the geometric markings of ${S_m}$ \cite[Corollary 1]{Dolg83}, namely on the sets of $m$ pair-wise disjoint $(-1)$-curves of ${S_m}$. In other words, every Cremona isometry sends $\{e_1,\dots, e_m\}$ to the exceptional curves $\{e_1',\dots, e_m'\}$ of some blow-up $g'\colon {S_m}\to \pr^2$ at $m$ points in general position, and $l$ to $l'\simeq (g')^*\ol_{\pr^2}(1)$.

Cremona isometries extend naturally to automorphisms of $\ma N^1({S_m})$: it follows that the action of ${\rm Cris}({S_m})$ on $\ma N^1({S_m})$ is invariant on the effective cone $\Eff({S_m})=\NE({S_m})$, and it is transitive on the set of its extremal rays when $m\geq 3$.

Let now $\Pi_1,\dots,\Pi_m\subset Z$ be as in Setup \ref{setup}, and let $\sigma \colon Z_m \to Z$ be the blow-up of $Z$ along $\Pi_1\cup\dots\cup\Pi_m$.
\begin{lemma}\label{lemma:h}
A general line in $Z$ is disjoint from $\Pi_1\cup\dots\cup\Pi_m$.
\end{lemma}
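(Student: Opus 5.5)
The plan is a dimension count inside the Fano variety of lines $F(Z)$. For a smooth cubic fourfold $Z\subset\pr^5$, the Fano variety of lines $F(Z)\subset G(2,6)$ is a smooth irreducible fourfold; this is classical, and I will use it as known (cf.\ \cite{CG72} and the references on lines of first and second type). Because $F(Z)$ is irreducible, it suffices to exhibit, for each $i$, a proper closed subset $F_i\subset F(Z)$ containing every line of $Z$ that meets $\Pi_i$. The lines meeting $\Pi_1\cup\dots\cup\Pi_m$ then lie in the proper closed subset $F_1\cup\dots\cup F_m$, and a general line in $Z$ avoids all the planes.

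Fix $i$ and let $F_i\subset F(Z)$ be the locus of lines meeting $\Pi_i$. It is closed, being the image of the closed incidence $\{(\ell,p):p\in\ell\cap\Pi_i\}$ under a proper projection, so only its dimension needs checking. Remark \ref{quadric-bundle-geometry} describes these lines concretely. A line $\ell\not\subset\Pi_i$ meeting $\Pi_i$ spans with $\Pi_i$ a $3$-plane $\Lambda\supset\Pi_i$. Hence $\ell\subset Z\cap\Lambda=\Pi_i\cup Q_\Lambda$, and therefore $\ell\subset Q_\Lambda$. Such lines are exactly the lines contained in fibres of the quadric bundle $\ph\colon Z_{\{i\}}\to\pr^2$ of Lemma \ref{quadric-bundle}. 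The lines inside $\Pi_i$ form a family of dimension $2$.

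The count goes as follows. Every fibre $Q_\Lambda$ is a quadric surface: it is a fibre of a flat quadric bundle, and it cannot be a plane counted twice, since otherwise $\Lambda$ would be tangent to $Z$ along a plane, contradicting smoothness. A quadric surface, whether smooth, a cone, or a union of two planes, contains at most a $2$-dimensional family of lines. The worst case is two planes, each carrying a $\pr^{2\vee}$ of lines. Since the base of $\ph$ is $\pr^2$, the family of lines contained in fibres has dimension at most $2+2=4$. This crude bound is not yet enough.

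The bound must be improved to $\dim F_i\le 3$, and this step requires the most care. A smooth quadric has only a $1$-dimensional family of lines (two $\pr^1$'s). So we need the locus $\Delta\subset\pr^2$ of singular fibres to be a proper subset, i.e.\ the discriminant curve, together with a dimension bound over it. I would argue as follows. Over $\pr^2\setminus\Delta$ the lines form a family of dimension $2+1=3$. Over the curve $\Delta$, the fibres are cones with a $1$-dimensional family of lines, giving dimension at most $1+1=2$. Fibres that are pairs of planes occur at only finitely many points, since such a plane lies in $Z$ and a smooth cubic fourfold contains only finitely many planes; these contribute at most $0+2=2$. The lines in $\Pi_i$ contribute dimension $2$. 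Altogether $\dim F_i\le3<4=\dim F(Z)$, as required.

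The remaining point is to show that a general fibre of $\ph$ is smooth. If every fibre were singular, then the total space $Z_{\{i\}}$, which is smooth, would force the singular points of the fibres to sweep out a section along which $\ph$ is not smooth. I would rule this out by taking a smooth point $q\in\Pi_i$ away from where things degenerate and choosing a general $\Lambda$. Alternatively, and more simply, I can bypass this altogether: by Lemma \ref{quadric-bundle} the quadric bundle is the restriction of a $\pr^3$-bundle to a divisor in $|\ol(2)+\phi^*\ol(1)|$, and a standard Bertini and generic-smoothness argument shows that the general fibre of $Z_{\{i\}}\to\pr^2$ is smooth, because $Z_{\{i\}}$ is smooth. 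This is the step I would take.
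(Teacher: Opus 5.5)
Your proposal is correct and follows the paper's route. The paper also uses $\dim F(Z)=4$, identifies the lines meeting $\Pi_i$ with the lines in fibres of the quadric bundle $Z_{\{i\}}\to\pr^2$, and notes that a general fibre (smooth, by generic smoothness as you say) has only a one-dimensional family of lines, which bounds these lines by dimension $3$. Your handling of the special fibres is more careful than the paper's one-line argument, though you do not need finiteness of planes: every fibre has at most a $2$-dimensional family of lines, and the singular fibres lie over a proper closed subset of $\pr^2$, which already gives the bound $3$.
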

\begin{proof}
The variety of lines contained in a smooth cubic fourfold has dimension $4$ by \cite[Lemma 7.7]{CG72}, so it suffices to show that the family of lines in $Z$ intersecting a given plane $\Pi\subset Z$ has dimension $\leq 3$.

Recall from Lemma \ref{quadric-bundle} and Remark \ref{quadric-bundle-geometry} that the blow-up $\sigma \colon Z_1\to Z$ along $\Pi$ has a quadric bundle $\ph\colon Z_1\to \pr^2$ such that a line in $Z$ is contracted by $\ph$ if and only if it intersects $\Pi$ along a point.
Now, since the family of lines contained in a general $\ph$-fibre is one-dimensional (i.e.\ a $\pr^1\sqcup\pr^1$ of lines in $\PP$), the family of lines contracted by $\ph$ is three-dimensional. This concludes the proof.
\end{proof}
Lemma \ref{lemma:h} implies that the strict transform $h$ of a general line in $Z$ is disjoint from the exceptional locus $\Exc(\sigma)=E_1\cup\dots\cup E_m$, thus $E_i\cdot h=0$ for all $i\in\{1,\dots,m\}$.
Note that $\ma N^1(Z_m)$ and $\ma N_1(Z_m)$ are generated respectively by the classes of $\{H, E_1, \dots, E_m\}$ and $\{h, f_1, \dots, f_m\}$; we collect in Table \ref{intersections} their intersection numbers.
\begin{table}[h] \caption{\small Intersection numbers on $Z_m$.}\label{intersections}
$\begin{array}{||c|c|c|c||}
\hline\hline
\rule{0pt}{2.5ex} \cdot & h & f_i & f_j \\
\hline\hline
\rule{0pt}{2.5ex} H & 1 & 0 & 0 \\
\hline
\rule{0pt}{2.5ex} E_i & 0 & -1 & 0 \\
\hline
\rule{0pt}{2.5ex} E_j & 0 & 0 & -1 \\
\hline\hline
\end{array}$
\end{table}
\begin{remark}\label{alpha}
Let $g\colon S_m\to \pr^2$ as above, and let $\Pic(S_m)\xrightarrow{\cong}\Pic(Z_m)$ be the isomorphism that sends $l$ to $H$ and $e_i$ to $E_i$ for all $i\in\{1,\dots,m\}$. Tensoring it with $\R$ over $\Z$, we get an isomorphism
$$\alpha\colon \ma N^1(S_m)\xrightarrow{\cong}\ma N^1(Z_m),$$
and we deduce from Table \ref{intersections} that the inverse of its dual
$$\beta:=(\alpha^*)^{-1}\colon \ma N_1(S_m)\xrightarrow{\simeq}\ma N_1(Z_m)$$
sends $[l]$ to $[h]$ and $[e_i]$ to $[f_i]$ for all $i\in\{1,\dots,m\}$.
Note that $\alpha$ and $\beta$ preserve the intersection forms, that is $\alpha(M)\cdot \beta(\gamma)=M\cdot \gamma$ for all $M\in \ma N_1({S_m})$ and $[\gamma]\in \ma N^1({S_m})$.
Therefore, we have $\alpha(-K_{S_m})\simeq -K_{Z_m}$.
\end{remark}

\begin{remdef}\label{cris_X}
Similarly to ${\rm Cris}(S_m)$, we denote by ${\rm Cris}(Z_m)$ the group generated by the automorphisms of $\Pic(Z_m)$ induced by permutations of the blown-up planes, together with the automorphisms \eqref{Cremona:Z} induced by the Cremona-like transformations $\{\zeta_{ijk}\colon Z \dashrightarrow Z'\}$ from Remark \ref{re:cremona} when $m\geq3$.
In fact, each $\zeta_{ijk}$ extends to an automorphism of $\Pic(Z_m)$
\begin{equation}\label{Cremona:Z}
\begin{gathered}
\zeta_{ijk} \colon \Pic(Z_m) \overset{\cong}{\longrightarrow} \Pic(Z_m)\\
H \mapsto H'\simeq 2H-E_i-E_j-E_k\\
E_i \mapsto E_i':=H_{jk,X}\simeq H-E_j-E_k \\
E_t \mapsto E_t':=E_t \quad \forall t \neq i,j,k
\end{gathered}
\end{equation}
that sends the exceptional divisors $\{E_1, \dots ,E_m\}$ of $\sigma \colon Z_m \to Z$ to the exceptional divisors $\{E_1', \dots ,E_m'\}$ of $\sigma'\colon Z_m \to Z'$ and $H\simeq\sigma^*\ol_Z(1)$ to $H'\simeq(\sigma')^*\ol_{Z'}(1)$.

It follows that for all $\zeta \in {\rm Cris}(Z_m)$ there exists a blow-up $\sigma_\zeta \colon Z_m \to Z_\zeta$ of a smooth cubic fourfold $Z_\zeta$ along $m$ planes as in Setup \ref{setup}, satisfying the generality conditions $\boldsymbol{(\star)}$.
Therefore, $\alpha$ induces an isomorphism between ${\rm Cris}(Z_m)$ and ${\rm Cris}({S_m})$, and the action of ${\rm Cris}(Z_m)$ on $\Pic(Z_m)$ corresponds to the action of ${\rm Cris}({S_m})$ on $\Pic({S_m})$.

Lastly, we denote by $\Exc(Z_m)$ the image under $\alpha$ of the set of $(-1)$-curves of ${S_m}$: $\Exc(Z_m)$ consists of the exceptional divisors of $\sigma_\zeta \colon Z_m \to Z_\zeta$ for all $\zeta \in {\rm Cris}(Z_m)$ when $m\geq3$, whereas $\Exc(Z_1)=\{E_1\}$ (see $\S$\ref{z1}) and $\Exc(Z_2)=\{E_1,\, E_2,\,F_{12}\}$ (see $\S$\ref{z2}).
\end{remdef}

\begin{thm}\label{m=8}
Let $\Pi_1,\dots,\Pi_m\subset Z$ be as in Setup \ref{setup}, satisfying the generality conditions $\boldsymbol{(\star)}$, and let ${S_m}$, $\alpha$ and $\beta$ be as in Remark \ref {alpha}.
Then, the blow-up $\sigma \colon Z_m \to Z$ along $\Pi_1\cup\dots\cup\Pi_m$ is a smooth Fano fourfold. Moreover:
\begin{enumerate}[i.]\setlength\itemsep{4pt}
\item the restriction of $\alpha$ to $\NE({S_m})$ yields an isomorphism
$$\NE({S_m})\xrightarrow{\simeq}\Eff(Z_m)$$
that sends each $(-1)$-curve $d$ of ${S_m}$ to an exceptional divisor $D\in \Exc(Z_m)$.
\item the restriction of $\beta$ to $\NE({S_m})$ yields an isomorphism
$$\NE({S_m})\xrightarrow{\simeq}\NE(Z_m)$$
that sends each $(-1)$-curve $d$ of ${S_m}$ to the class of a non-trivial fibre of the contraction of $D:=\alpha(d)$.
\end{enumerate}
When $m\geq2$, the extremal rays of $\Eff(Z_m)$ are generated by the divisors in $\Exc(Z_m)$, and the non-trivial fibres of the corresponding contractions generate the extremal rays of $\NE(Z_m)$.
\end{thm}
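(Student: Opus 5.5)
We argue by induction on $m$, using Proposition \ref{m=3} (and \S\ref{z1}, \S\ref{z2} for $m\le2$) as base cases. The strategy is to build inside $\ma N_1(Z_m)$ the cone $\ma C':=\beta(\NE(S_m))$ and show that it equals $\NE(Z_m)$. Since $\alpha(-K_{S_m})=-K_{Z_m}$ and $\alpha,\beta$ preserve the pairing, $-K_{Z_m}$ is then positive on $\NE(Z_m)\setminus\{0\}$, and Kleiman's criterion gives ampleness. The dictionary is fixed by Remark-Definition \ref{cris_X}: every $(-1)$-curve $d$ of $S_m$ is a ${\rm Cris}(S_m)$-translate of some $e_i$ (for $m\ge3$). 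Hence $D:=\alpha(d)$ is the exceptional divisor $E_i'$ of a blow-up $\sigma_\zeta\colon Z_m\to Z_\zeta$ of a smooth cubic fourfold along planes satisfying $\boldsymbol{(\star)}$, and its $\sigma_\zeta$-fibre $f_i'$ has class $\beta(d)$. Indeed, $\zeta$ transforms the pair $(\{E_t\},\{f_t\})$ compatibly, since Table \ref{intersections} holds for every presentation.

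\textbf{Step 1: $\ma C'\subseteq \NE(Z_m)$ and the divisors $D$ are effective.} For $m\le 8$, $\NE(S_m)$ is generated by $(-1)$-curves when $m\ge2$ (with $l,e_1$ for $m=1$). Each $\beta(d)=[f_i']$ is the class of an actual curve, and each $\alpha(d)=E_i'$ is an effective prime divisor. So $\ma C'\subseteq\NE(Z_m)$, and $\alpha(\NE(S_m))\subseteq\Eff(Z_m)$.

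\textbf{Step 2: $\NE(Z_m)\subseteq\ma C'$ via facets.} We apply Lemma \ref{lem:cones} to the dual cones $\Nef(Z_m)\subseteq \ma C'^{\vee}=\alpha(\Nef(S_m))$. Equivalently, we show that every extremal ray of $\Nef(S_m)$ maps under $\alpha$ to a nef class on $Z_m$. Then $\alpha(\Nef(S_m))\subseteq\Nef(Z_m)$, and duality gives $\NE(Z_m)\subseteq\ma C'$. The extremal rays of $\Nef(S_m)$ are ${\rm Cris}(S_m)$-orbits of a short explicit list: $l$, $l-e_1$, $2l-e_1-e_2$ (when $m\geq 2$), the pullbacks of $-K$ of lower-degree del Pezzo surfaces $-K_{S_k}=3l-\sum_{t\le k}e_t$ for $k<m$, and similar classes such as $4l-2e_1-e_2-\dots$ for large $m$.

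Since ${\rm Cris}(Z_m)$ acts through genuine new presentations $\sigma_\zeta$, it suffices to check nefness for one representative in each orbit. This is where induction enters. The classes $H$ and $H-E_1$ are globally generated (Lemma \ref{quadric-bundle}). The class $2H-E_1-E_2$ is the pullback of the supporting divisor of $\psi\colon Z_2\to T$. Each class $-K$-type involving only $k<m$ exceptional divisors is the pullback under $\pi_I\colon Z_m\to Z_k$ of $-K_{Z_k}$, which is ample by induction. Any remaining orbit representatives can be reduced, using Cremona moves, to sums of these.

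\textbf{Step 3: conclusions.} Once $\NE(Z_m)=\beta(\NE(S_m))$, the Fano property follows as explained above, and item ii follows directly. For item i, $\Eff(Z_m)\supseteq\alpha(\NE(S_m))$ by Step 1. For the reverse inclusion, we use that $Z_m$ is now a Mori dream space. Each facet of $\alpha(\NE(S_m))$ is dual to a nef class $N$ of the type in Step 2. If $N$ is not big, it supports a fibration, and its dual face bounds $\Eff$. If $N$ is big, it supports a birational contraction whose exceptional divisors are exactly the $D$'s in that facet, so the facet lies on $\partial\Eff(Z_m)$. Lemma \ref{lem:cones} then gives equality. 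Finally, the contraction of $D=E_i'$ is $\pi$ composed with $\sigma_\zeta$, which identifies the extremal rays.

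The main obstacle is Step 2. It requires the complete list of generators of $\Nef(S_m)$ up to Weyl symmetry for $m=7,8$, where the classes $-K_{S_7}$ and $-2K_{S_8}$-type rays appear, together with a proof that the corresponding classes on $Z_m$ are nef. For these we would first try writing each such class as a nonnegative combination of pullbacks of ample anticanonical classes $-K_{Z_k}$, $k<m$, under the various presentations.
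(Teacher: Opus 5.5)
\textbf{There is a genuine gap: Step 2 is not carried out.} Your overall architecture is the paper's, written dually. Step 1 is the paper's Step~\ref{extremal ray}: each $\beta(d)$ is the class of a fibre $f_i'$ of some presentation $\sigma_\zeta$, so $\beta(\NE(S_m))\subseteq\NE(Z_m)$. Step 2, checking that $\alpha$ sends every extremal ray of $\Nef(S_m)$ to a nef class, is the dual of the paper's Step~\ref{facet} combined with Lemma~\ref{lem:cones}. But Step 2 is the only step with real content, and you leave it open. You defer the ``remaining orbit representatives'' to be ``reduced, using Cremona moves, to sums of these''. You also expect extra generators such as $-K_{S_7}$ or $-2K_{S_8}$ for $m=7,8$, which you do not know how to handle. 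As written, neither the inclusion $\NE(Z_m)\subseteq\beta(\NE(S_m))$ nor the Fano property is proved.

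The missing idea is the correct description of $\Nef(S_m)$, which makes Step 2 immediate.
\begin{itemize}
\item Extremal rays of $\Nef(S_m)$ correspond to facets of $\NE(S_m)$. Since $S_m$ is Fano, each facet is $\NE$ of a contraction onto a variety of Picard rank one.
\item A birational such contraction contracts $m$ pairwise disjoint $(-1)$-curves and lands on $\pr^2$. Indeed $S_m$ has no other negative curves, and two meeting $(-1)$-curves span a class of nonnegative square.
\item A fibre-type one is a conic bundle $S_m\to\pr^1$. It factors through $\mathbb F_1\to\pr^1$, so it is given by $l'-e'$ for some presentation $S_m\to\pr^2$.
\end{itemize}
Hence the extremal rays of $\Nef(S_m)$ are exactly the ${\rm Cris}(S_m)$-translates of $l$ and $l-e_1$. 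For $m=8$ these are the $17280$ simplex facets and $2160$ cross-polytope facets of the Gosset polytope.

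The additional classes on your list are not extremal:
\begin{itemize}
\item $-K_{S_7}$ and $-K_{S_8}$ are ample;
\item $2l-e_1-e_2=(l-e_1)+(l-e_2)$;
\item the pullback of $-K_{S_k}$ with $1\le k<m$ supports $S_m\to S_k$, whose target has $\rho\ge 2$.
\end{itemize}
With the correct list no induction is needed. The class $\alpha(l')=H_\zeta=\sigma_\zeta^*\ol_{Z_\zeta}(1)$ is nef. The class $\alpha(l'-e')=H_\zeta-E_{\zeta,i}$ is globally generated by Lemma~\ref{quadric-bundle} applied to the smooth cubic $Z_\zeta$.

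Your Step 3 for item i takes a different route from the paper. The paper deduces from ii that every exceptional prime divisor of $Z_m$ lies in $\Exc(Z_m)$, and then uses Hu--Keel's description of $\Eff$ for Mori dream spaces. Your facet-by-facet argument can also work, but bigness of $N$ is not what must be checked. What must be checked is that the curve classes $\beta(l')=h'$ and $\beta(l'-e')=h'-f'$ are nonnegative on effective divisors. This holds because strict transforms of lines in $Z_\zeta$ (general ones, and ones meeting a single plane once, respectively) cover $Z_m$.
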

\begin{proof}
Set $X=Z_m$.
The statements easily follow from $\S$\ref{z1} and $\S$\ref{z2} for $m\in\{1,2\}$, so we may assume $m\geq 3$. We already know that $X$ is Fano for $m=3$ by Proposition \ref{m=3}.

We deduce from Remark \ref{alpha} and Remark - Definition \ref{cris_X} that $\Exc(X)$ consists of the exceptional divisors of the blow-ups $\sigma_\zeta \colon X \to Z_\zeta$ for all $\zeta \in {\rm Cris}(X)$, and that ${\rm Cris}(X)$ acts transitively on it.
Note that $-K_X$ is fixed by the action of ${\rm Cris}(X)$.

We start by proving item \emph{ii.}
Let $D:=\alpha(d)\in \Exc(X)$ be the exceptional divisor corresponding to a $(-1)$-curve $d$ of $S_m$, and let $g_D\colon X \to X_D$ be its contraction.
Then, $\beta(d)$ is the class of the non-trivial fibres of $g_D$ by Step \ref{extremal ray}, so it generates an extremal ray of $\NE(X)$.
Therefore, every extremal ray of $\beta(\NE({S_m}))\subseteq\NE(X)$ is an extremal ray of $\NE(X)$.
To see that equality holds, note that every facet of $\beta(\NE({S_m}))$ is contained in facets of $\NE(X)$ by Step \ref{facet}, thus
$\beta(\NE({S_m}))= \NE(X)$
by Lemma \ref{lem:cones}.

To show that $X$ is Fano, note that \emph{ii.}\ and Remark \ref{alpha} imply that $-K_X$ is positive on the extremal rays of $\NE(X)$, hence $-K_X$ is ample by Kleiman's criterion \cite[Theorem 1.27]{Debarre}.

Finally, \emph{ii.}\ implies that all exceptional divisors of $X$ appear in $\Exc(X)$. The cone generated by $\Exc(X)$ in $\ma N^1(X)$ coincides with $\alpha(\NE({S_m}))\subseteq\Eff(X)$, and equality holds by \cite[Proposition 1.11(2)]{HK00} and \cite[Corollary 1.3.2]{BCHM}).
This concludes the proof of item \emph{i.}

\begin{step}\label{extremal ray}
Let $d$ be a $(-1)$-curve of $S_m$.
Then, $\beta(d)$ is the class of the non-trivial fibres of the contraction $g_D\colon X \to X_D$ of $D:=\alpha(d)$.
\end{step}
\begin{proof}
Let $f_D$ be a non-trivial fibre of $g_D$: we prove that $f_D\equiv \beta(d)$. It follows from above that there exist $\zeta \in {\rm Cris}(X)$ and exceptional divisors $D_1, \dots, D_{m-1}\in \Exc(X)$ such that the blow-up $\sigma_\zeta \colon X \to Z_\zeta$ is the contraction of $\{D,D_1, \dots, D_{m-1}\}$, and so $D_i\cdot f_D=0$ for all $i\in\{1,\dots,m-1\}$ by \eqref{intersection} (cf.\ Table \ref{intersections}).
Therefore, $\sigma_\zeta$ corresponds via $\alpha$ to the contraction $g\colon {S_m} \to \pr^2$ of the $(-1)$-curves $\{d, d_1, \dots, d_{m-1}\}$ where $d_i:=\alpha^{-1}(D_i)$, and
\begin{gather*}
D\cdot f_D=-1, \quad D\cdot \beta(d)=d^2=-1, \\
K_X\cdot f_D=-1, \quad K_X\cdot \beta(d)=K_{S_m}\cdot d=-1, \\
D_i\cdot f_D=0 \ \forall i, \quad D_i\cdot \beta(d)=d_i\cdot d=0 \ \forall i.
\end{gather*}
To conclude, note that $\{K_X,D, D_1, \dots, D_{m-1}\}$ forms a basis of $\ma N^1 (X)$.
\end{proof}

\begin{step}\label{facet}
Every facet of $\beta(\NE({S_m}))$ is contained in facets of $\NE(X)$.
\end{step}
\begin{proof}
Each facet $F$ of $\NE(S_m)$ corresponds either to a blow-up ${S_m} \to \pr^2$, and $\beta(F)=\NE(\sigma_\zeta)$ for some $\sigma_\zeta \colon X \to Z_\zeta$, or to a conic bundle ${S_m} \to \pr^1$ that factors through a $\pr^1$-bundle $\Bl_{\{pt\}} \pr^2\to \pr^1$.
In the latter case, ${S_m} \to \pr^1$ is given by a semiample divisor $l-e$, where $l$ is the transform of a general line for some blow-up ${S_m} \to \pr^2$ and $e$ is one of the exceptional curves.
As in the proof of Proposition \ref{m=3}, we see that $H-E:=\alpha(l-e)$ is the supporting divisor of a fibration $\omega\colon X \to \pr^2$ that factors through a quadric bundle as in Lemma \ref{quadric-bundle}.
It follows that $\beta(F) \subseteq\NE(\omega)$ by Remark \ref{alpha}.
\end{proof}
\end{proof}

\begin{remark}\label{embedded surface}
The del Pezzo surface $S_m$ from Theorem \ref{m=8} (together with the isomorphism between the Mori cones and effective cones of $S_m$ and $Z_m$) is abstract, meaning that it does not need to be a subvariety of $Z_m$; nonetheless, this identification is possible in certain cases.

Let $\Pi_1,\dots,\Pi_{m+1}\subset Z$ be as in Setup \ref{setup}. Assume that $\Pi_1,\dots,\Pi_{m}\subset Z$ satisfy the generality conditions $\boldsymbol{(\star)}$, so that $Z_m$ is Fano, and that $\Pi_1,\dots,\Pi_{m+1}$ satisfy the generality condition \textbf{G1}.
We set $S_m:=P_{m+1}$ with inclusion $\iota\colon P_{m+1}\hookrightarrow Z_m$ and pushforward of one-cycles $\beta:=\iota_*$, so that every $(-1)$-curve $d$ of $P_{m+1}$ is the intersection $d=D\cap P_{m+1}\simeq f_D$ for some $D \in \Exc(Z_m)$, and the strict transform of a general line in $\Pi_{m+1}$ via $\mu_{m+1}\colon P_{m+1}\to\Pi_{m+1}$ is numerically equivalent to $h$.
See also \cite[Corollary 8.21]{Casa25}.

The identification above is possible for $m\leq 7$, but it is not clear whether $Z_8$ naturally contains a del Pezzo surface $\iota\colon S_8\hookrightarrow Z_8$ such that $\beta=\iota_*$.
However, this happens if, for instance, $Z$ contains a special configuration of planes; see Corollary \ref{co:base-locusV}.
\end{remark}

\begin{corollary}\label{Eff(X)}
Let $Z_m$ be as in Theorem \ref{m=8} with $m\geq 2$.
For every $$(d; k_1, k_2)\in\{(1;2,0),\, (2;5,0),\, (3;6,1),\, (4;5,3),\, (5;2,6),\, (6;7,1)\}$$ with $k:=k_1+k_2\leq m$, and for every partition $\mathfrak p_{(k_1,k_2)}:=\{a_1,\dots,a_{k_1}\mid b_1,\dots,b_{k_2}\}$ of $\{i_1, \dots, i_k\}\subseteq\{1, \dots, m\}$, there exists a unique irreducible hypersurface section
$$H_{d,\,\mathfrak p_{(k_1,k_2)}}\in|\ol_Z(d)|$$
containing $\Pi_j$ if and only if $j\in\{i_1, \dots, i_k\}$, and such that:
\begin{enumerate}[i.]\setlength\itemsep{4pt}
\item $H_{1,\,\{i_1, i_2\}}$ is smooth at the general point of $\Pi_{i_1},\Pi_{i_2}$;
\item $H_{2,\,\{i_1, \dots, i_5\}}$ is smooth at the general point of $\Pi_{i_1},\dots,\Pi_{i_5}$;
\item $H_{3,\,\mathfrak p_{(6,1)}}$ is smooth at the general point of $\Pi_{a_1},\dots,\Pi_{a_6}$ and has multiplicity two at the general point of $\Pi_{b_1}$;
\item $H_{4,\,\mathfrak p_{(5,3)}}$ is smooth at the general point of $\Pi_{a_1},\dots,\Pi_{a_5}$ and has multiplicity two at the general point of $\Pi_{b_1},\Pi_{b_2},\Pi_{b_3}$;
\item $H_{5,\,\mathfrak p_{(2,6)}}$ is smooth at the general point of $\Pi_{a_1},\Pi_{a_2}$ and has multiplicity two at the general point of $\Pi_{b_1},\dots,\Pi_{b_6}$;
\item $H_{6,\,\mathfrak p_{(7,1)}}$ has multiplicity two at the general point of $\Pi_{a_1},\dots,\Pi_{a_7}$ and has multiplicity three at the general point of $\Pi_{b_1}$.
\end{enumerate}
The strict transforms of the above hypersurface sections, together with $E_1, \dots, E_m$, form $\Exc(Z_m)$.
\end{corollary}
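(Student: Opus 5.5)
The plan is to transport the classification of $(-1)$-curves on $S_m$ to $\Exc(Z_m)$ via $\alpha$, and then read off each divisor's image in $Z$ and its multiplicities along the planes. Recall that the $(-1)$-curves of a del Pezzo surface of degree $9-m\geq 1$ are the $e_i$ together with the strict transforms of plane curves of class
$$dl-\textstyle\sum_{a}e_{a}-2\sum_b e_b-3\sum_c e_c,$$
with $(d;k_1,k_2)$ exactly as in the list. Here $k_1$ counts the points of multiplicity $d-\ldots$ as tabulated: lines through $2$ points, conics through $5$, cubics through $6$ and double at one, quartics through $5$ and double at $3$, quintics through $2$ and double at $6$, and sextics double at $7$ and triple at one. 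By Theorem \ref{m=8}\emph{i.}\ and Remark - Definition \ref{cris_X}, $\Exc(Z_m)=\alpha(\{(-1)\text{-curves}\})$, so the elements of $\Exc(Z_m)$ other than the $E_i$ are prime divisors $D$ with class $dH-\sum_a E_a-2\sum_b E_b-3\sum_c E_c$. Each such $D$ is an irreducible exceptional divisor: it is the exceptional divisor of some $\sigma_\zeta$, hence a prime divisor that is rigid in its class.

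First, I would set $H_{d,\mathfrak p}:=\sigma(D)$. Since $D\neq E_i$ is prime, $\sigma(D)$ is an irreducible hypersurface of $Z$. Because $\Pic(Z)=\Z H$ (Lefschetz), it is a section of $|\ol_Z(d)|$, and its strict transform is $D$. The coefficient of $E_j$ in $\sigma^*\sigma(D)-D$ is the multiplicity of $\sigma(D)$ at the general point of $\Pi_j$. Reading off coefficients $1,2,3$ gives items \emph{i.}--\emph{vi.}, and shows that $\Pi_j\subset H_{d,\mathfrak p}$ if and only if $j$ occurs in the partition. Coefficient $0$ means that $\Pi_j\not\subset\sigma(D)$, since otherwise $E_j$ would appear with positive coefficient.

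For existence, I would note that Cris$(S_m)$ acts transitively on $(-1)$-curves, so every class in the list, for every partition with $k\leq m$, occurs. This is just the standard enumeration of $(-1)$-classes, which is valid because the points $q_i$ are in general position and the symmetric group permutes the indices. The isomorphism $\alpha$ then supplies the divisor $D$ for each such class.

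For uniqueness, suppose $H'\in|\ol_Z(d)|$ is irreducible with the same containments and multiplicities at least the prescribed ones. Its strict transform $D'$ has class $[D]-\sum c_jE_j$ with $c_j\geq 0$. So $[D]=[D']+\sum c_j E_j$ is a sum of effective classes. Since $[D]$ spans an extremal ray of $\Eff(Z_m)$ by Theorem \ref{m=8}, every summand is proportional to $[D]$, which forces $c_j=0$ and $[D']=[D]$. Finally, $h^0(Z_m,D)=1$ because $D$ is an exceptional divisor of a birational contraction, so $D'=D$.

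The main obstacle is the statement's phrase ``exactly multiplicity'' versus ``at least''. If one reads the conditions as minimal multiplicities, the extremality argument above handles it. I would also check that no further $E_j$ components occur, and this follows from the primality of $D$.
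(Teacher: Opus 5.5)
Your proposal is correct and follows the paper's route: transport the $(-1)$-curves of $S_m$ to $\Exc(Z_m)$ via $\alpha$ and Theorem \ref{m=8}. You also spell out details the paper leaves implicit, namely reading the degree and multiplicities off $\sigma^*\sigma(D)-D$, and getting uniqueness from the extremality of $[D]$ in $\Eff(Z_m)$ together with $h^0(Z_m,D)=1$. The only slip is that for $m=2$ the divisor $F_{12}$ is exceptional for $\psi\colon Z_2\to T$ rather than for some $\sigma_\zeta$ (the paper treats this case via \S\ref{z2}); since your argument only uses that it is a prime exceptional divisor, nothing breaks.
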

\begin{proof}
The existence and uniqueness of the hypersurface sections from items \emph{i.}\ to \emph{vi.}\ follow from the proof of Theorem \ref{m=8} and the description of $(-1)$-curves of smooth del Pezzo surfaces for $m\geq3$, and from $\S$\ref{z2} for $m=2$.
\end{proof}

\begin{corollary}\label{Nef(X)}
Let $Z_m$, $S_m$ and $\alpha$ be as in Theorem \ref{m=8}.
Then, the restriction of $\alpha$ to $\Nef(S_m)$ yields an isomorphism
$$\Nef(S_m)\xrightarrow{\simeq}\Nef(Z_m).$$
The extremal rays of $\Nef(Z_m)$ are generated by divisors of the form $H_\zeta$ or $H_\zeta-E_{\zeta,i}$ for some $\sigma_\zeta \colon Z_m \to Z_\zeta$ and $\zeta \in {\rm Cris}(Z_m)$.
\end{corollary}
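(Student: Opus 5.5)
The isomorphism $\NE(S_m)\cong\NE(Z_m)$ from Theorem \ref{m=8}\emph{ii.} is realised by $\beta$, and by Remark \ref{alpha} $\alpha$ and $\beta$ are mutually dual: $\alpha(M)\cdot\beta(\gamma)=M\cdot\gamma$. So I would first dualise. Since $\Nef(S_m)=\NE(S_m)^\vee$ and $\Nef(Z_m)=\NE(Z_m)^\vee$, for any $M\in\ma N^1(S_m)$ we have
$$\alpha(M)\in\Nef(Z_m)\iff \alpha(M)\cdot\beta(\gamma)\geq0\ \ \forall\gamma\in\NE(S_m)\iff M\cdot\gamma\geq0\ \ \forall\gamma\in\NE(S_m)\iff M\in\Nef(S_m).$$
This uses that $\beta(\NE(S_m))=\NE(Z_m)$. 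Hence $\alpha(\Nef(S_m))=\Nef(Z_m)$, and this gives the first assertion formally.

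For the description of the extremal rays, I would transport the classical statement for del Pezzo surfaces. For $m\geq 2$ (and with the obvious small cases $m\leq1$ handled by \S\ref{z1} directly), the extremal rays of $\Nef(S_m)$ are generated by the supporting classes of the elementary faces of the Mori cone, dually to the facets of $\NE(S_m)$. As recalled in Step \ref{facet}, each facet of $\NE(S_m)$ corresponds either to a birational contraction $S_m\to\pr^2$, with supporting class $l'=(g')^*\ol_{\pr^2}(1)$, or to a conic bundle $S_m\to\pr^1$ with supporting class $l'-e'$ for some presentation $g'$ and exceptional curve $e'$. Moreover, ${\rm Cris}(S_m)$ acts transitively on presentations $g'$. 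So every such $l'$ equals $\zeta(l)$ and every $l'-e'$ equals $\zeta(l-e_i)$ for some $\zeta\in{\rm Cris}(S_m)$.

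It then remains to apply $\alpha$, using Remark - Definition \ref{cris_X}. Under $\alpha$, ${\rm Cris}(S_m)$ corresponds to ${\rm Cris}(Z_m)$, and $\zeta(l)$, $\zeta(e_i)$ correspond to $H_\zeta=\sigma_\zeta^*\ol_{Z_\zeta}(1)$ and $E_{\zeta,i}$ for the blow-up $\sigma_\zeta\colon Z_m\to Z_\zeta$. Thus the extremal rays of $\Nef(Z_m)$ are spanned by $H_\zeta$ and $H_\zeta-E_{\zeta,i}$. As a consistency check, these are indeed nef: $H_\zeta$ is a pullback of an ample class, and $H_\zeta-E_{\zeta,i}$ is globally generated by Lemma \ref{quadric-bundle} applied to $Z_\zeta$, as in the proof of Proposition \ref{m=3}.

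The main obstacle is the surface-side fact that the extremal rays of $\Nef(S_m)$ are exactly the classes $l'$ and $l'-e'$. For $m\leq 7$ this is standard, but for $m=8$ the nef cone of the degree-one del Pezzo surface has extra rays, for instance $-K_{S_8}$-related classes such as those giving contractions to $\pr(1,1,2)$ or to del Pezzo surfaces of higher degree that are not $\pr^2$. Those rays are not of the form above. I would first try to confirm, via the dual facet description in Step \ref{facet}, that every facet of $\NE(S_m)$ is of one of the two types. If that fails for $m=8$, I would restrict the second assertion to the rays actually arising, noting that only the first assertion is purely formal.
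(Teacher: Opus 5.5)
Your duality argument for $\alpha(\Nef(S_m))=\Nef(Z_m)$ is the paper's argument. So is your reduction of the second assertion to the description of the facets of $\NE(S_m)$, transported via ${\rm Cris}$ and $\alpha$: the paper's proof simply points to \S\ref{z1}, \S\ref{z2} and the proof of Theorem \ref{m=8}\emph{ii.}, i.e.\ Step \ref{facet}. The gap is in your last paragraph. You assert that for $m=8$ the cone $\Nef(S_8)$ has extra extremal rays, for instance from contractions to $\pr(1,1,2)$ or to del Pezzo surfaces of higher degree other than $\pr^2$. You then leave the second assertion unproved and even propose weakening it. That assertion is false: the dichotomy stated in Step \ref{facet} holds for every $m\leq 8$, so nothing needs to be restricted.

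Here is why. Extremal rays of $\Nef(S_m)$ are dual to facets of $\NE(S_m)$, that is, to contractions $S_m\to Y$ with $\rho(Y)=1$.
\begin{itemize}
\item \emph{No curves of self-intersection $\leq -2$.} If $C\subset S_m$ is irreducible with $C^2<0$, adjunction and $-K_{S_m}\cdot C\geq 1$ give $C^2=2p_a(C)-2-K_{S_m}\cdot C\geq -1$. So $C$ is a $(-1)$-curve. Hence no contraction can create an $A_1$-point, and there is no contraction onto $\pr(1,1,2)$. The bi-anticanonical map of $S_8$ is a finite double cover of the quadric cone, not a contraction, and $-K_{S_8}$ is ample, hence interior to the nef cone.
\item \emph{Birational case.} The contracted curves are $(-1)$-curves with negative definite intersection matrix. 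Every $2\times 2$ minor then gives $1-(d\cdot d')^2>0$, so they are pairwise disjoint. Thus $Y$ is a smooth del Pezzo surface with $\rho(Y)=1$, i.e.\ $Y\cong\pr^2$, with supporting class $l'$.
\item \emph{Fibre type.} Here $Y\cong\pr^1$, and a general fibre $F$ satisfies $F^2=0$ and $-K_{S_m}\cdot F=2$, so this is a conic bundle. For $m\geq 2$, choosing one component in each singular fibre gives a contraction onto $\mathbb{F}_1$. This yields $F\simeq l'-e'$ for a presentation $g'\colon S_m\to\pr^2$; infinitely near points cannot occur, again because there are no $(-2)$-curves.
\item \emph{Other targets.} Contractions onto $\PP$, $\mathbb{F}_1$ or other higher-degree del Pezzo surfaces have $\rho(Y)\geq 2$. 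They correspond to faces of $\Nef(S_m)$ of dimension $\geq 2$, not to rays.
\end{itemize}
As a consistency check, the $19440$ facets of $\NE(S_8)$ are the $17280$ simplices and $2160$ cross-polytopes of the $E_8$ (Gosset $4_{21}$) polytope. These match exactly the blow-downs to $\pr^2$ and the conic classes.

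With this filled in, your proof is complete and coincides with the paper's.
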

\begin{proof}
It follows from $\S$\ref{z1} and $\S$\ref{z2} for $m\in\{1,2\}$, and from the proof of Theorem \ref{m=8}\emph{ii.} for $m\geq 3$.
\end{proof}

\begin{corollary}\label{contractions:X}
Let $Z_m$ be as in Theorem \ref{m=8}.
Then, every contraction of $Z_m$ is one of the following:
\begin{enumerate}[i.]\setlength\itemsep{4pt}
\item a smooth blow-up that is either $\sigma_\zeta \colon Z_m \to Z_\zeta$ for some $\zeta \in {\rm Cris}(Z_m)$, or one of its intermediate blow-ups $Z_m \to Z_{\zeta,I}$ as in \eqref{m!};
\item a fibration $\omega\colon  Z_m \to \pr^2$ in del Pezzo surfaces that factors through the quadric bundle $\ph\colon Z_1 \to \pr^2$ from Lemma \ref{quadric-bundle};
\item a smooth blow-up $\kappa\colon Z_m \to T$ onto a Verra fourfold $T$ that factors through the elementary blow-up $\psi\colon Z_2 \to T$ from Remark \ref{H_12}.
\end{enumerate}
In particular, $Z_m$ has no small contractions.
\end{corollary}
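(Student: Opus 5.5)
Since $Z_m$ is a smooth Fano fourfold (Theorem \ref{m=8}), hence a Mori dream space, every contraction of $Z_m$ corresponds to a face of $\NE(Z_m)$. Equivalently, it corresponds to a face of $\Nef(Z_m)$ via its supporting divisor. By Theorem \ref{m=8}\emph{ii.} and Corollary \ref{Nef(X)}, the linear isomorphisms $\beta$ and $\alpha$ identify the face lattices of $\NE(Z_m)$ and $\NE(S_m)$, and of $\Nef(Z_m)$ and $\Nef(S_m)$. The plan is therefore to classify the faces of $\NE(S_m)$, that is, the contractions of the del Pezzo surface $S_m$, and then realise each one on $Z_m$. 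Using the transitive action of ${\rm Cris}(Z_m)\cong{\rm Cris}(S_m)$, I would reduce every case to a normal form expressed in terms of $H,E_1,\dots,E_m$ for a suitable presentation $\sigma_\zeta\colon Z_m\to Z_\zeta$.

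The contractions of $S_m$ fall into three classes. The first consists of birational contractions of a set of pairwise disjoint $(-1)$-curves. After a Cremona isometry these are sub-blow-downs of some $g'\colon S_m\to\pr^2$, except when the image is $\PP$, the case of two intersecting lines. The second consists of conic bundles $S_m\to\pr^1$ given by $l-e$ in a suitable marking. The third is the constant map to a point, given by the zero face. On the $Z_m$ side these correspond as follows.

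\emph{(a)} Faces spanned by $[f_{i}]$ for $i\in I$ in some marking $\zeta$ are contracted by the intermediate blow-up $Z_m\to Z_{\zeta,I}$ of \eqref{m!}. Its supporting divisor is $\alpha$ of the corresponding $S_m$-divisor, and it is a smooth blow-up by Corollary \ref{blowup-planes}. The full face gives $\sigma_\zeta$.

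\emph{(b)} A face corresponding to $S_m\to\PP$ has supporting divisor $2l-e_1-e_2$ up to isometry, and $\alpha$ sends this to $2H-E_1-E_2$. This divisor is the pullback under $\pi_{\{1,2\}}$ of the supporting divisor of $\psi\colon Z_2\to T$ from Remark \ref{H_12}. So the contraction is $\kappa=\psi\circ\pi_{\{1,2\}}$, possibly preceded by partial blow-downs of the other $E_j$ when the face is smaller. One must check that $\kappa$ is a smooth blow-up, that is, that the strict transform of $V$ is smooth. This follows since $\kappa$ is a composition of smooth blow-ups whose centres are disjoint from or transverse to $F_{12}$, as shown for $m=3$ via the $\varepsilon_{ij}$ in \S\ref{sub:cremona} and then by the Cremona action.

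\emph{(c)} Faces containing a conic bundle direction have supporting divisor $\alpha(l-e)=H_\zeta-E_{\zeta,i}$, plus any further blow-down data. As in Step \ref{facet} and Proposition \ref{m=3}, this is $\ph\circ\pi$, a fibration over $\pr^2$ factoring through the quadric bundle of Lemma \ref{quadric-bundle}. Its general fibre is a quadric blown up at the points where it meets the other planes, which is a del Pezzo surface.

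For the final claim I would argue as follows. Every extremal ray is generated by $\beta(d)$ for a $(-1)$-curve $d$, by Theorem \ref{m=8}, and its contraction $g_D$ is divisorial with exceptional divisor $D$. Hence no elementary contraction is small, and no contraction is small, since a small contraction would contract some extremal ray with small locus. The main obstacle I expect is the completeness of the face correspondence. It requires that the combinatorics of faces of $\NE(S_m)$ is exactly the list above, including non-elementary faces that mix fibre types, and that the corresponding divisors on $Z_m$ have the claimed geometry rather than merely the right numerical class. I would handle this by always reducing via ${\rm Cris}$ to a face containing $[f_1]$ or $[f_2]$ and factoring through the explicit maps of \S\ref{z1} and \S\ref{z2}.
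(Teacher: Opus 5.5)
Your proposal is correct and is the paper's own argument, just written out in more detail: via Theorem \ref{m=8}\emph{ii.}\ the contractions of $Z_m$ correspond to those of $S_m$. The blow-downs $S_m\to\pr^2$ (with their intermediate steps), the conic bundles $S_m\to\pr^1$ and the contractions $S_m\to\PP$ are then matched with items \emph{i.}, \emph{ii.}\ and \emph{iii.}\ using Setup \ref{setup}, Lemma \ref{quadric-bundle} and Remark \ref{H_12} in a Cremona-normalised marking.

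Two small precisions. First, the general fibre of $\omega=\ph\circ\pi_{\{i\}}$ is simply a smooth quadric $Q_\Lambda\cong\PP$: the projection from $\Pi_i$ sends each $\Pi_j$ with $j\neq i$ onto a line of $\pr^2$, so the general $Q_\Lambda$ misses it. Second, the smoothness of $\kappa$ needs no extra argument, since $\kappa=\psi\circ\pi_{\{1,2\}}$ is already a composition of smooth blow-ups.
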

\begin{proof}
We know from Theorem \ref{m=8}\emph{ii.}\ that $\alpha$ induces a one-to-one correspondence between the contractions of $Z_m$ and the contractions of $S_m$, and the claim follows from Setup \ref{setup}, Lemma \ref{quadric-bundle} and Remark \ref{H_12}.
We have that item \emph{i.}\ corresponds to $S_m\to \pr^2$, item \emph{ii.}\ to $S_m\to \pr^1$ and item \emph{iii.}\ to $S_m\to \PP$.
\end{proof}

\begin{proof}[Proof of Theorem \ref{main:fano}]
It follows from Theorem \ref{m=8} and Corollary \ref{contractions:X}.
\end{proof}

\begin{proof}[Proof of Theorem \ref{main:cone}]
It follows from Theorem \ref{m=8} and Corollaries \ref{Eff(X)}, \ref{Nef(X)} and \ref{contractions:X}.
\end{proof}

\section{Existence and a special configuration of planes}\label{sec:existence}
In this section we prove Theorem \ref{existence}, that is the existence of a smooth cubic fourfold $Z\subset \pr^5$ and $m\leq 8$ planes $\Pi_1,\dots,\Pi_m\subset Z$ as in Setup \ref{setup} that satisfy the generality conditions $\boldsymbol{(\star)}$.

To do so, we consider a special configuration of planes $\Pi^{\ma V}_1, \dots, \Pi^{\ma V}_m\subset\pr^5$ satisfying the generality conditions \textbf{G0} and \textbf{G1}, that is the tangent planes to the Veronese surface $\ma V\subset \pr^5$, see $\S$\ref{veronese}.
In $\S$\ref{sec:horace} we show the existence of a smooth cubic fourfold containing such planes and satisfying the remaining generality condition \textbf{G2}: we use the residual exact sequence \eqref{horace} from $\S$\ref{ses}, a powerful tool that allows us to compute the dimension of the linear system of cubic hypersurfaces in $\pr^5$ that contain $\Pi^{\ma V}_1\cup \dots\cup \Pi^{\ma V}_m$.

Notation as in Section \ref{ssetup}.

\subsection{Tangent planes to the Veronese surface}\label{veronese}
Let $\nu\colon \pr^2 \hookrightarrow \pr^5$ be the $2$-Veronese embedding of $\pr^2$, that is $\pr^2=\pr\bigl(\C[y_0, y_1,y_2]_1\bigr)$, $\pr^5=\pr\bigl(\C[y_0, y_1,y_2]_2\bigr)$ and
$$\ma V:=\nu(\pr^2)=\{[L^2]\mid [L]\in\pr^2\}.$$
A classical result, usually credited to Lasker \cite{Lasker}, gives an explicit description of the tangent planes to $\ma V$ in terms of products of linear forms, that is
$$T_{\nu(L)}\ma V=\{[L\cdot L']\mid [L']\in \pr^2\}.$$
Thus, the tangent planes corresponding to distinct points $[L_1],[L_2]\in\pr^2$ intersect along the unique point $[L_1\cdot L_2]\in \pr^5$.
As a direct consequence we have the following.

\begin{lemma}\label{tangent-planes}
Let $\{\Pi^{\ma V}_i:=T_{\nu(L_i )}\ma V\}_{i\in I}\subset\pr^5$ be the tangent planes corresponding to points $\{[L_i]\}_{i\in I}\subset \pr^2$. Then, for all $i\in I$ there exists an isomorphism
$$\theta_i\colon \pr^2\to \Pi^{\ma V}_i$$
such that
$$\theta_i(L_i)=\nu(L_i)=[L_i^2]= (\Pi^{\ma V}_i\cap \ma V)_{red} \quad \text{and} \quad \theta_i(L_j)=[L_i\cdot L_j]=\Pi^{\ma V}_i\cap \Pi^{\ma V}_j \ \forall j\neq i.$$
\end{lemma}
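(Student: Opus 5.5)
The plan is to build $\theta_i$ directly from Lasker's description of the tangent planes. Write $V_1:=\C[y_0,y_1,y_2]_1$, so that $\pr^2=\pr(V_1)$ and $\pr^5=\pr(\C[y_0,y_1,y_2]_2)$. For a fixed $i\in I$, multiplication by $L_i$,
$$m_{L_i}\colon V_1\to \C[y_0,y_1,y_2]_2,\qquad L'\mapsto L_i\cdot L',$$
is linear and injective, because the polynomial ring is a domain. It therefore induces a linear embedding $\pr^2\hookrightarrow\pr^5$, $[L']\mapsto[L_i\cdot L']$. Its image is exactly $\{[L_i L']\mid [L']\in\pr^2\}$, which equals $T_{\nu(L_i)}\ma V=\Pi^{\ma V}_i$ by the result credited to Lasker. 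Define $\theta_i$ to be this map with target $\Pi^{\ma V}_i$; it is an isomorphism since it is a bijective linear map of projective planes.

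With this definition the point values are immediate. We have $\theta_i(L_i)=[L_i^2]=\nu(L_i)$ and $\theta_i(L_j)=[L_iL_j]$.

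Next I will check that $[L_iL_j]=\Pi^{\ma V}_i\cap\Pi^{\ma V}_j$ for $j\neq i$. The point $[L_iL_j]$ lies in both planes by Lasker's description. For the converse, suppose $[L_iL']=[L_jL'']$. Unique factorization in $\C[y_0,y_1,y_2]$ and the fact that $[L_i]\neq[L_j]$ force $L_j\mid L'$, so $[L']=[L_j]$. Hence the intersection is the single point $[L_iL_j]$, as already noted in the text.

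It remains to show $(\Pi^{\ma V}_i\cap\ma V)_{red}=\nu(L_i)$. A point of $\Pi^{\ma V}_i\cap\ma V$ has the form $[L^2]=[L_iL']$ for some $L,L'$. By unique factorization, $L_i\mid L^2$ forces $[L]=[L_i]$. So set-theoretically the intersection is the single point $\nu(L_i)$, and hence its reduced structure is that point.

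There is no real obstacle in this argument. The only inputs are Lasker's description of the tangent planes, which is quoted in the text, and unique factorization of quadrics into linear forms; the one point to be careful about is the reduced-structure claim, and unique factorization handles it.
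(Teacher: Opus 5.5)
Your proposal is correct and matches the paper, which presents the lemma as a direct consequence of Lasker's description $T_{\nu(L)}\ma V=\{[L\cdot L']\mid [L']\in\pr^2\}$ without further argument. You make explicit what the paper leaves implicit: the isomorphism $\theta_i\colon[L']\mapsto[L_i L']$, and the unique-factorization checks that give $\Pi^{\ma V}_i\cap\Pi^{\ma V}_j=\{[L_iL_j]\}$ and $(\Pi^{\ma V}_i\cap\ma V)_{red}=\nu(L_i)$.
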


Therefore, any closed relation on $\{[L_i]\}_{i\in I}\subset \pr^2$ induces, through $\theta_i$, the same closed relation on $\{[L_i\cdot L_j]\}_{j\in I}\subset \Pi^{\ma V}_i$ for all $i\in I$, and we have the following.

\begin{corollary}\label{existence planes V}
Let $[L_1], \dots, [L_m]\in \pr^2$ be $m\leq 8$ points in general position, i.e.\ the blow-up of $\pr^2$ at $[L_1]\cup \dots\cup [L_m]$ is a smooth del Pezzo surface.
Then, the corresponding tangent planes $\Pi^{\ma V}_1, \dots, \Pi^{\ma V}_m\subset \pr^5$ are as in Setup \ref{setup} and satisfy the generality conditions \textbf{G0} and \textbf{G1}.
\end{corollary}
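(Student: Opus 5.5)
We need to check three things for the tangent planes $\Pi^{\ma V}_i=T_{\nu(L_i)}\ma V$: that they pairwise meet at distinct points, condition \textbf{G0}, and condition \textbf{G1}. All three should reduce, through the isomorphisms $\theta_i$ of Lemma \ref{tangent-planes}, to the fact that the $[L_i]$ are in general position in $\pr^2$.

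\emph{Setup condition.} By Lasker's description, $\Pi^{\ma V}_i\cap\Pi^{\ma V}_j=\{[L_iL_j]\}$ is a single point for $i\neq j$. By unique factorization in $\C[y_0,y_1,y_2]$, $[L_iL_j]=[L_aL_b]$ forces $\{[L_i],[L_j]\}=\{[L_a],[L_b]\}$. Hence the points $p_{ij}=[L_iL_j]$ are pairwise distinct. This is all that Setup \ref{setup} asks of the planes themselves, since containment in a smooth cubic is a separate issue treated later.

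\emph{Condition \textbf{G1}.} The isomorphism $\theta_i\colon\pr^2\to\Pi^{\ma V}_i$ is the projectivization of the linear map $L'\mapsto L_iL'$, which is injective. It sends $[L_j]$ to $p_{ij}$ for every $j\neq i$. Therefore $\{p_{ij}\}_{j\neq i}\subset\Pi^{\ma V}_i$ is projectively equivalent to $\{[L_j]\}_{j\neq i}\subset\pr^2$. A subset of points in general position is still in general position, since no three collinear, no six on a conic and no eight on a cubic singular at one of them are inherited by subsets. So $P_i=\Bl_{\{p_{ij}\}}\Pi_i$ is a smooth del Pezzo surface.

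\emph{Condition \textbf{G0}.} This is the only step needing a small argument. For distinct $i,j,k$, the span $\Pi^{\ma V}_i+\Pi^{\ma V}_j$ is the projectivization of $L_iV_1+L_jV_1$, where $V_1$ denotes the space of linear forms. Its dimension is $3+3-1=6$ as a vector space, since the intersection $L_iV_1\cap L_jV_1=\C\cdot L_iL_j$ is one-dimensional. So it is a hyperplane $D_{ij}$, namely the quadrics lying in the ideal $(L_i,L_j)$. These are exactly the conics through the point $x_{ij}\in(\pr^2)^\vee$ cut out by $L_i=L_j=0$, which is the line joining $[L_i]$ and $[L_j]$ read in the dual plane. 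If $\Pi^{\ma V}_k\subset D_{ij}$, then $L_kL'\in(L_i,L_j)$ for every linear form $L'$. Taking $L'$ not vanishing at $x_{ij}$ forces $L_k(x_{ij})=0$, so $L_k\in\operatorname{span}(L_i,L_j)$. That means $[L_i],[L_j],[L_k]$ are collinear, contradicting general position. Hence no three of the planes lie in a common hyperplane.

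I expect the main care to be in \textbf{G0}: computing the hyperplane $D_{ij}$ correctly and identifying the ideal-membership condition with collinearity. The other two steps are immediate transfers via $\theta_i$.
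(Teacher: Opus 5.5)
Your proof is correct. Its overall structure matches the paper's, but the argument for \textbf{G0} takes a different route.

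\emph{Shared part.} In both, the Setup condition and \textbf{G1} are transferred from $\pr^2$ to each $\Pi^{\ma V}_i$ through the isomorphisms $\theta_i$ of Lemma \ref{tangent-planes}, and only \textbf{G0} needs a real argument. The paper dismisses the first two in one line. You spell them out: unique factorization gives distinctness of the $p_{ij}$, and general position is inherited by subsets.

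\emph{The paper's argument for \textbf{G0}.} It pulls a hyperplane $H\supset\Pi^{\ma V}_1\cup\Pi^{\ma V}_2\cup\Pi^{\ma V}_3$ back along $\nu$. Since $H$ contains the three tangent planes, the conic $\nu^{-1}(H\cap\ma V)$ is singular at $[L_1],[L_2],[L_3]$. It is therefore a double line, so the points are collinear.

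\emph{Your argument for \textbf{G0}.} You compute the span explicitly as $\pr\bigl((L_i,L_j)_2\bigr)$, the quadrics through the point $x_{ij}=V(L_i,L_j)$. You then conclude by evaluating $L_kL'$ at $x_{ij}$.

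\emph{Comparison.} The paper's argument is shorter and never needs to identify $D_{ij}$. Yours is purely linear-algebraic and avoids the tangency and singularity step. It also shows directly that $\Pi^{\ma V}_i+\Pi^{\ma V}_j$ is a hyperplane, with an explicit description. That is exactly why any hyperplane containing all three planes must equal $D_{ij}$, a step you use implicitly and should state.

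\emph{One slip to fix.} $\dim(L_iV_1+L_jV_1)=3+3-1=5$, not $6$. It is a $5$-dimensional subspace of the $6$-dimensional space $\C[y_0,y_1,y_2]_2$, which is why its projectivization is a hyperplane $\pr^4\subset\pr^5$. A $6$-dimensional subspace would be the whole space, contradicting your next sentence.
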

\begin{proof}
In view of Lemma \ref{tangent-planes}, we only need to show that the generality condition \textbf{G0} holds for $m=3$.
If there exists an hyperplane $H$ containing $\Pi^{\ma V}_1\cup \Pi^{\ma V}_2\cup \Pi^{\ma V}_3$, then the curve $\nu^{-1}(H\cap \ma V)$ is a conic with double points at $[L_1],\,[L_2],\,[L_3]$, hence a double line. We conclude that the points are collinear, a contradiction.
\end{proof}

The Veronese surface $\ma V$ naturally provides us with a cubic fourfold containing any number of planes as in Setup \ref{setup}: sadly, such cubic is singular.

\begin{example}\label{secant}
The Veronese surface $\ma V$ is $2$-defective, meaning that the $2$-secant variety
$$\sigma_2(\ma V)=\overline{\bigcup_{\substack{q_i, q_j \in \ma V\\ i\neq j}}\overline{q_i\, q_j}}$$
does not have the expected dimension: $\sigma_2(\ma V)$ is an irreducible determinantal cubic fourfold, singular along $\ma V$ (\cite[$\S1$ and references therein]{BO08}), and it is not difficult to see that $\sigma_2(\ma V)$ coincides with the tangential variety
$${\rm Tan}(\ma V):=\bigcup_{q\in\ma V}T_{q}\ma V.$$
\end{example}

\subsection{Cubic fourfolds containing planes}\label{sec:horace}
With Corollary \ref{existence planes V} we show the existence of at least a configuration of $m\leq8$ planes $\Pi_1,\dots,\Pi_m$ as in Setup \ref{setup} that satisfy the generality conditions \textbf{G0} and \textbf{G1}.
In this subsection, we conclude the last step towards Theorem \ref{existence}: we prove the existence of a smooth cubic fourfold $Z$ that contains $\Pi_1\cup\dots\cup\Pi_m$ and that satisfies the generality condition \textbf{G2}.

Consider the ideal sheaves
$$
\mathfrak I_{m,d}:=\ma I_{\Pi_1\cup\dots\cup\Pi_m/\pr^5}\otimes\ol_{\pr^5}(d)\quad\text{and}\quad \w{\mathfrak I}_{m,d}:=\ma I_{\Pi_1\cup\dots\cup\Pi_m\cup\w\Pi/\pr^5}\otimes\ol_{\pr^5}(d),
$$
for $d\geq1$, $m\geq 3$ and any given $\w\Pi:=\w\Pi_{ijk}$. Our objective is to show that the corresponding linear systems of cubic fourfolds containing $\Pi_1\cup\dots\cup\Pi_m$ and $\Pi_1\cup\dots\cup\Pi_m\cup\w\Pi$, respectively, are such that
$$|H^0(\pr^5,\w{\mathfrak I}_{m,3})|\subsetneq |H^0(\pr^5,\mathfrak I_{m,3})|,$$
and that a general element of $|H^0(\pr^5,\mathfrak I_{m,3})|$ is smooth.

\begin{remark}\label{exp dim}
We compute the expected dimension of $H^0(\pr^5,\mathfrak I_{m,3})$ and $H^0(\pr^5,\w{\mathfrak I}_{m,3})$.
Since the restriction map $H^0(\pr^5,\ol_{\pr^5}(3)) \to H^0(\pr^2, \ol_{\pr^2}(3))$ is surjective, containing a fixed plane imposes exactly $h^0(\pr^2,\ol_{\pr^2}(3))= 10$ linear conditions on the complete linear system of cubic fourfolds.
By construction, $\Pi_1,\dots,\Pi_m$ intersect at distinct points, thus containing $\Pi_1$ induces $10$ conditions, containing $\Pi_2$ induces $9$ conditions, and so on. Therefore, the \emph{expected dimension} $\delta_m$ of $H^0(\pr^5, \mathfrak I_{m,3})$ is
$$
\delta_m=h^0(\pr^5,\ol _{\pr^5}(3))-\sum_{n=0}^{m-1}(10-n)=\frac{1}{2}(m^2-21m+112).
$$
Note that $\delta_m$ is decreasing for $m\leq 10$.
Moreover, a cubic fourfold that contains $\Pi_i,\,\Pi_j,\,\Pi_k$ intersects $\w\Pi_{ijk}$ at least along $\ell_i^{jk}\cup\ell_j^{ik}\cup\ell_k^{ij}$, and this corresponds to $9$ conditions.
Therefore, a cubic fourfold through $\Pi_i,\,\Pi_j,\,\Pi_k$ requires one further condition to contain $\w\Pi_{ijk}$, i.e.\ the \emph{expected dimension} $\w \delta_m$ of $H^0(\pr^5, \w{\mathfrak I}_{m,3})$ is
$$
\w \delta_m=\delta_m-1=\frac{1}{2}(m^2-21m+110).
$$
In conclusion, we have $\delta_m \leq h^0(\mathfrak I_{m,3})$, $\w \delta_m\leq h^0(\w{\mathfrak I}_{m,3})$, and $h^0(\mathfrak I_{m,3})-1\leq h^0(\w{\mathfrak I}_{m,3}) \leq h^0(\mathfrak I_{m,3})$.
\end{remark}

\medskip

Now, let us take $m\leq 8$ planes $\Pi^{\ma V}_1, \dots, \Pi^{\ma V}_m\subset \pr^5$ tangent to the Veronese surface $\ma V\subset \pr^5$ as in Corollary \ref{existence planes V}.
In the following proposition we show that, at least for this configuration of planes, the expected dimensions $\delta_m$ and $\w \delta_m$ are in fact exact.

\begin{proposition}\label{expected-dim}
Let $\Pi^{\ma V}_1,\dots,\Pi^{\ma V}_m\subset\pr^5$ be $m\leq8$ planes as in Corollary \ref{existence planes V}. Then,
$$h^0(\pr^5, \mathfrak I_{m,3})=\delta_m  \quad \text{and} \quad h^0(\pr^5, \w{\mathfrak I}_{m,3})=\w \delta_m.$$
\end{proposition}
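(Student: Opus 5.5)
Since Remark \ref{exp dim} already gives the lower bounds $h^0(\mathfrak I_{m,3})\geq\delta_m$ and $h^0(\w{\mathfrak I}_{m,3})\geq\w\delta_m$, we only need the upper bounds. Equivalently, we must show that the planes $\Pi^{\ma V}_1,\dots,\Pi^{\ma V}_m$ (respectively together with $\w\Pi$) impose independent conditions on cubics. Because $\delta_m$ is decreasing and conditions can only become dependent as more planes are added, the statement for $m=8$ implies it for all $m\leq 8$. Indeed, if $8$ planes impose independent conditions, so does any subset: subsets of independent conditions stay independent. The same holds after adding $\w\Pi=\w\Pi_{123}$, provided $\{1,2,3\}$ is among the indices, which we may assume by symmetry. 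So we fix $m=8$ and aim for $h^0(\mathfrak I_{8,3})=\delta_8=8$ and $h^0(\w{\mathfrak I}_{8,3})=7$.

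The tool is the Horace method via the residual sequence \eqref{horace}, with $W=\pr^5$ and $D$ a well-chosen hyperplane. The natural choice is $D=D_{12}=\Pi^{\ma V}_1+\Pi^{\ma V}_2$. Taking global sections gives
$$h^0(\mathfrak I_{8,3})\leq h^0\bigl(\ma I_{{\rm Res}_D(X)/\pr^5}(2)\bigr)+h^0\bigl(\ma I_{X\cap D/D}(3)\bigr).$$
The residual ${\rm Res}_D(X)$ consists of the planes $\Pi^{\ma V}_3,\dots,\Pi^{\ma V}_8$. The trace $X\cap D$ is $\Pi^{\ma V}_1\cup\Pi^{\ma V}_2$ together with the six lines $\ell^{12}_k=D\cap\Pi^{\ma V}_k$ for $k\geq3$, each passing through $p_{1k}$ and $p_{2k}$.

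We then handle the two terms separately.
\begin{itemize}
\item \emph{Quadric term.} We bound the quadrics through six tangent planes. Via the Veronese description, quadrics in $\pr^5$ correspond to $S^2(S^2V^*)$. We can iterate the Horace step with another hyperplane, or argue directly: a quadric containing $T_{\nu(L_i)}\ma V$ imposes a condition in each direction $L_i\cdot L'$. Six general points should force the quadric space to be as small as expected, namely $21-\sum_{n=0}^{5}(6-n)=0$.
\item \emph{Cubic term.} We bound the cubics in $D\cong\pr^4$ through the two planes and six lines. These can be cut again by the hyperplane $\Pi^{\ma V}_1+\ell$ or via a quadric section, reducing to plane curves. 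The curve conditions are controlled by Lemma \ref{tangent-planes}: it translates the points $[L_iL_j]$ on each plane into the configuration $[L_1],\dots,[L_8]$ in $\pr^2$, where general position (G1) gives independence of conditions on conics and cubics.
\end{itemize}
The expected counts have to add up to $\delta_8=8$. If they do not, the hyperplane can be replaced by $D_{12}$ with a specialization, or by the tangent hyperplane at a point of $\ma V$.

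For $\w{\mathfrak I}$ we add $\w\Pi_{123}$, preferably choosing $D\supset\w\Pi_{123}$, for instance $D_{12}$ again. Then $\w\Pi$ lies in the trace, and we must show that it imposes exactly one extra condition on cubics of $D$ through the lines $\ell_1,\ell_2,\ell_3$. This reduces to exhibiting a single cubic through the rest that does not contain $\w\Pi$. We can produce such a cubic explicitly from Veronese symmetry, for instance the secant cubic $\sigma_2(\ma V)$ of Example \ref{secant} plus a product of linear forms, or by a direct computation.

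The main obstacle is making the Horace bookkeeping exact: choosing $D$ so that neither the residual nor the trace imposes fewer conditions than expected. The special position of the tangent planes may create unexpected dependencies in the trace, since the lines $\ell_k^{12}$ in $D_{12}$ are not general. I would first try to verify the trace term with the explicit coordinates of Lemma \ref{tangent-planes}. Failing that, I would fall back on a differential Horace specialization, or a direct rank computation for specific $[L_i]$, which suffices by semicontinuity.
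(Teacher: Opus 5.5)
Your reduction to $m=8$ and your first residual step with respect to $D_{12}$ (residual $\Pi_3^{\ma V}\cup\dots\cup\Pi_8^{\ma V}$ twisted by $\ol(2)$, trace $\Pi_1^{\ma V}\cup\Pi_2^{\ma V}\cup\ell_3^{12}\cup\dots\cup\ell_8^{12}$ in $D_{12}\cong\pr^4$) are essentially how the paper starts; the paper also puts $\w\Pi_{123}\subset D_{12}$ in the trace. After that, however, you only assert the two estimates that carry all the content: $h^0(\mathfrak I_{6,2})=0$ for six tangent planes, and $h^0(\text{trace})\leq 4$. Your numerical target is also off: $\delta_8=\tfrac12(64-168+112)=4$ and $\w\delta_8=3$, not $8$ and $7$ (you dropped the factor $\tfrac12$); compare $h^0(-K_{Z_8})=\delta_8-1=3$.

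The missing idea is the geometry that makes the Horace count exact, and your diagnosis of it is reversed. Condition \textbf{G1} only controls the points $p_{ij}$ inside each single plane, and that is not enough. The paper cuts the trace again by $D_{12}\cap D_{45}\cong\pr^3$. There $\ell_1^{45},\ell_2^{45},\ell_4^{12},\ell_5^{12}$ (and $\w\ell_{123}^{45}$) lie on a smooth quadric $Q\cong\PP$, and the argument finishes with a residual with respect to $Q$. This requires the points $r_k=D_{12}\cap D_{45}\cap\Pi_k^{\ma V}$, $k\geq 6$, to lie off $Q$, and $r_6,r_7,r_8$ to be non-collinear (arranged by switching to $D_{13}\cap D_{45}$ if necessary). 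If $r_6\in Q$, the equation of $Q$ is a nonzero section of the last trace $\ma I_{L\cup\{r_3,r_6\}/\pr^3}(2)$ in the proof of Lemma \ref{I_6(2)}, and the bound breaks down.

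What guarantees $r_k\notin Q$ is $\w\Pi_{ijk}\cap\w\Pi_{abc}=\emptyset$ (Lemma \ref{lem:planes-tildaV}), a condition on the mutual position of six planes. This is the one place where the Veronese configuration is used. A common point gives a relation $\sum c_{ij}L_iL_j=\sum c_{ab}L_aL_b$, i.e.\ a conic through the vertices of two triangles in $(\pr^2)^*$. Hence there is a conic inscribed in both triangles, whose dual conic would contain $[L_1],\dots,[L_6]$. So the special position \emph{prevents} unexpected dependencies rather than creating them. This input is what your sketch lacks, together with the extra residual steps for the quadric terms (Lemma \ref{I_4(1)}, and the cut by $D_{12}\cap D_{67}$).

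Your computer fallback would not prove the statement as stated either. A rank computation at one configuration plus semicontinuity gives $h^0=\delta_m$ only on a dense open set of configurations, not for every configuration as in Corollary \ref{existence planes V}. That weaker form would still suffice for Theorem \ref{existence}.

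On the other hand, your idea for the $\w{\mathfrak I}$ part is correct, and simpler than the paper's treatment, once you drop the ``plus a product of linear forms''. The cubic $\sigma_2(\ma V)={\rm Tan}(\ma V)$ contains every $\Pi_i^{\ma V}$. Since $L_1,L_2,L_3$ are linearly independent, its determinantal equation restricted to $\w\Pi_{123}=\{[aL_1L_2+bL_1L_3+cL_2L_3]\}$ is a nonzero multiple of $abc$, so it does not contain $\w\Pi_{123}$. With Remark \ref{exp dim} this gives $h^0(\w{\mathfrak I}_{m,3})=h^0(\mathfrak I_{m,3})-1$ for all $m\geq3$, so everything reduces to $h^0(\mathfrak I_{8,3})\leq 4$. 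The paper goes the other way: it carries $\w\Pi$ through the whole Horace computation, proves $h^0(\w{\mathfrak I}_{8,3})\leq3$, and deduces the bound for $\mathfrak I_{8,3}$ from Remark \ref{exp dim}.
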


To prove Proposition \ref{expected-dim}, we first establish some preliminary results, namely Lemmas \ref{lem:planes-tildaV}, \ref{I_4(1)} and \ref{I_6(2)}.
For simplicity, we omit the superscript $\text{-}^{\ma V}$ in the remainder of this subsection. 
Recall the short exact sequences in $\S$\ref{ses} and the notation from $\S$\ref{geom-conf}.

\begin{lemma}\label{lem:planes-tildaV}
Let $\Pi_1,\dots,\Pi_6\subset\pr^5$ be planes as in Setup \ref{setup}, satisfying the generality conditions \textbf{G0} and \textbf{G1}.
Then:
\begin{enumerate}[i.]\setlength\itemsep{4pt}
\item $\w\Pi_{123}\cap\w\Pi_{456}=\emptyset$ if and only if  $\w\Pi_{ijk}\cap\w\Pi_{abc}=\emptyset$ for all partitions $\{i,j,k\mid a,b,c\}$ of $\{1,\dots,6\}$;
\item if $\Pi_1,\dots,\Pi_6$ are as in Corollary \ref{existence planes V}, then $\w\Pi_{ijk}\cap\w\Pi_{abc}=\emptyset$ for all partitions $\{i,j,k\mid a,b,c\}$ of $\{1,\dots,6\}$.
\end{enumerate}
\end{lemma}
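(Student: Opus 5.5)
For item \emph{i.}, I plan to reduce the intersection condition to a linear-algebra statement about the fifteen points $p_{ij}$, and then use the symmetry of this configuration under Cremona-like relabellings. Two planes in $\pr^5$ are disjoint exactly when they span $\pr^5$. So $\w\Pi_{123}\cap\w\Pi_{456}=\emptyset$ if and only if the six points $p_{12},p_{13},p_{23},p_{45},p_{46},p_{56}$ are linearly independent in $\C^6$.

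Fix a partition $\{i,j,k\mid a,b,c\}$. I will express both $\w\Pi_{ijk}$ and $\w\Pi_{abc}$ in terms of a common frame. Lemma \ref{4planes} and \textbf{G0} for four planes show that $p_{12},p_{13},p_{14},p_{23},p_{24},p_{34}$ are independent, since $\Pi_1,\dots,\Pi_4$ span $\pr^5$. Hence these six points give a projective frame up to scaling. In this frame, each of $p_{i5},p_{i6}$ lies in $\Pi_i$, which is spanned by the three $p_{ij}$ with $j\le 4$, and each $p_{56}$ lies in $\Pi_5\cap\Pi_6$.

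The cleaner route, I expect, is to show that ``$\w\Pi_{ijk}\cap\w\Pi_{abc}\neq\emptyset$'' is equivalent to one single closed condition on the six planes that is symmetric in all indices. The candidate condition is: the six planes $\Pi_1,\dots,\Pi_6$ lie on a common quadric, or equivalently the fifteen points satisfy a determinantal relation. Concretely, I plan to show that a point of $\w\Pi_{123}\cap\w\Pi_{456}$ exists if and only if a certain $6\times 6$ determinant $\Delta$ vanishes. To do this I will write $p_{ij}$ with $i\in\{1,2,3\}$, $j\in\{4,5,6\}$ via the frame, and check that $\Delta$ changes only by sign when indices are transposed across the partition. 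It then suffices to check invariance under the transpositions $(3\,4)$, together with the obvious $S_3\times S_3\times\Z/2$ symmetry, since these generate $S_6$. The invariance under $(3\,4)$ is the main obstacle. I would attack it through the incidence structure: the line $\ell_i^{jk}=D_{jk}\cap\Pi_i$ and the collinearities in \S\ref{geom-conf} give linear relations among the $p$'s. From these I would show that a dependence among $\{p_{12},p_{13},p_{23},p_{45},p_{46},p_{56}\}$ forces one among $\{p_{12},p_{14},p_{24},p_{35},p_{36},p_{56}\}$, by eliminating through $\Pi_3$ and $\Pi_4$.

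For item \emph{ii.}, I will use item \emph{i.} and only check that $\w\Pi_{123}\cap\w\Pi_{456}=\emptyset$ for the Veronese configuration. Under Lemma \ref{tangent-planes}, $p_{ij}=[L_iL_j]$ is a quadric form. So I must show that the six quadrics $L_1L_2,L_1L_3,L_2L_3,L_4L_5,L_4L_6,L_5L_6$ are linearly independent in $\C[y_0,y_1,y_2]_2$ when the $[L_i]$ are in general position. A dependence would give a conic in the span of $\langle L_1L_2,L_1L_3,L_2L_3\rangle$, which is the net of conics through the three points dual to the triangle, that also lies in the analogous net for $4,5,6$. Dually, this is a conic apolar to both triangles. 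I would check that the two nets intersect trivially by evaluating at an explicit general choice, for instance $L_1,L_2,L_3=y_0,y_1,y_2$ and $L_4,L_5,L_6$ three generic forms. Since the condition is closed and the points are general, a single nonvanishing $6\times 6$ determinant suffices. If the abstract argument for \emph{i.} fails, the fallback is to verify \emph{ii.} directly for all ten partitions. This is the same computation with a different labelling, and general position of the $L_i$ is itself symmetric.
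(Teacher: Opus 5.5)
Your reduction for item \emph{i.}\ is the same as the paper's: prove a single swap across the partition, e.g.\ $\{1,2,3\mid4,5,6\}\Rightarrow\{1,2,6\mid3,4,5\}$, then relabel, since \textbf{G0} and \textbf{G1} are symmetric. But that swap is the whole content of \emph{i.}, and you do not prove it. You call it ``the main obstacle'' and only suggest ``eliminating through $\Pi_3$ and $\Pi_4$''.

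The determinant device does not work as stated. Each $p_{ij}$ is only a point of $\pr^5$, so $\Delta_{\mathfrak p}$ is defined only up to rescaling its columns. Moreover, the determinants for two partitions involve different sets of points, so ``$\Delta$ changes only by sign'' has no meaning. What you need is an implication between two vanishing conditions, and linear elimination in your frame gives no handle on it.

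Your symmetric candidate is actually correct. Combining the residual sequences in the proof of Lemma \ref{I_6(2)} with the ruling argument below, one sees that under \textbf{G0}, \textbf{G1} the six planes lie on a quadric if and only if $\w\Pi_{123}\cap\w\Pi_{456}\neq\emptyset$. But you give no argument for this, and any argument needs a quadratic input that is missing from your plan. In the paper it is the following.
\begin{itemize}
\item In $D_{12}\cap D_{45}\cong\pr^3$, the lines $\ell_4^{12},\ell_5^{12},\w\ell_{123}^{45}$ are pairwise skew by Lemma \ref{4planes}. Hence they lie in one ruling of a unique quadric $Q\cong\PP$.
\item The other ruling contains $\ell_1^{45},\ell_2^{45},\w\ell_{345}^{12}$, since each of these meets all three lines.
\item A point of $\w\Pi_{123}\cap\w\Pi_{456}$ lies on $\w\ell_{123}^{45}\cap\w\ell_{456}^{12}$ and off $\ell_4^{12}\cup\ell_5^{12}$. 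So $\w\ell_{456}^{12}$ meets $Q$ in three distinct points and lies in the second ruling.
\item Then $\w\ell_{126}^{45}$, which meets $\ell_1^{45}$, $\ell_2^{45}$ and $\w\ell_{456}^{12}$ (at $r_6$), lies in the first ruling. Hence it meets $\w\ell_{345}^{12}$, i.e.\ $\w\Pi_{126}\cap\w\Pi_{345}\neq\emptyset$.
\end{itemize}

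For item \emph{ii.}, your translation into linear independence of $L_1L_2,L_1L_3,L_2L_3,L_4L_5,L_4L_6,L_5L_6$ is right. As your fallback says, it applies to each partition directly, so \emph{ii.}\ does not really need \emph{i.}

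The gap is that one configuration with nonzero determinant proves the claim only on a dense open set of sextuples. The hypothesis of Corollary \ref{existence planes V} is the explicit del Pezzo condition (no three $[L_i]$ collinear, not all six on a conic), and the statement is claimed for every such sextuple. So you must identify the vanishing locus.

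Doing your computation symbolically achieves this. Since $L_1,L_2,L_3$ are linearly independent, you may take them to be $y_0,y_1,y_2$. Writing $L_s=a_sy_0+b_sy_1+c_sy_2$, the $6\times6$ determinant reduces, up to sign, to $\det(a_sa_t,\,b_sb_t,\,c_sc_t)_{\{s,t\}\subset\{4,5,6\}}$. This in turn equals, up to sign, $\det(b_sc_s,\,a_sc_s,\,a_sb_s)_{s=4,5,6}$. That vanishes exactly when $[L_4],[L_5],[L_6]$ lie on a conic through the coordinate points $[L_1],[L_2],[L_3]$, which the hypothesis excludes.

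The paper reaches the same conclusion classically. A relation gives a conic through the six vertices of the two triangles of lines, hence a conic inscribed in both triangles. Its dual conic then passes through all six $[L_i]$, a contradiction.
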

\begin{proof}
We show that $\w\Pi_{123}\cap\w\Pi_{456}\neq\emptyset$ implies $\w\Pi_{126}\cap\w\Pi_{345}\neq\emptyset$. Since the argument does not depend on the choice of indices, we deduce that $\w\Pi_{ijk}\cap\w\Pi_{abc}\neq\emptyset$ for every partitions $\{i,j,k\mid a,b,c\}$ of $\{1,\dots,6\}$, hence we obtain \emph{i.}

Assume $\w\Pi_{123}\cap\w\Pi_{456}\neq\emptyset$.
We have
$$\w\Pi_{123}\cap\w\Pi_{456}=\w\Pi_{123}\cap(D_{45}\cap D_{46}\cap D_{56})=\w\ell_{123}^{45}\cap\w\ell_{123}^{46}\cap\w\ell_{123}^{56},$$
so that $\w\Pi_{123}\cap\w\Pi_{456}$ is either a point or a line.
In either case, $\w\Pi_{123}\cap\w\Pi_{456}\cap \ell_i^{jk}$ must be empty for $\{i,j,k\}=\{1,2,3\}$, otherwise the points $\{p_{i4},\,p_{i5},\,p_{i6}\}\subset \Pi_i$ would be collinear, a contradiction to the generality condition \textbf{G1}.
Therefore, we obtain that
$$\w\Pi_{123}\cap\w\Pi_{456}=\w\ell_{123}^{45}\cap\w\ell_{123}^{46}\cap\w\ell_{123}^{56}=\w\ell_{456}^{12}\cap\w\ell_{456}^{13}\cap\w\ell_{456}^{23}$$
is a point which does not lie on any line $\ell_u^{vw}$ with $\{u,v,w\}=\{1,2,3\},\{4,5,6\}$.
Since $\w\Pi_{123}$ is disjoint from $\Pi_i$ for all $i\in\{4,5,6\}$ by Lemma \ref{4planes}, we have that the lines $\ell_4^{12},\, \ell_5^{12},\,\w\ell_{123}^{45}$ are pairwise disjoint: let $Q\cong\PP$ be the unique quadric surface in $D_{12}\cap D_{45}\cong\pr^3$ containing them (see Figure \ref{lines-rulingsV}).
Now, let $r_3$ and $r_6$ be the points $D_{12}\cap D_{45} \cap \Pi_3$ and $D_{12}\cap D_{45} \cap \Pi_6$, respectively: we have $r_3=\ell_3^{12}\cap\ell_3^{45}$, thus $r_3=\w\ell_{123}^{45}\cap\w\ell_{345}^{12}$, and similarly $r_6=\w\ell_{126}^{45}\cap\w\ell_{456}^{12}$.
The line $\w\ell_{345}^{12}$ intersects each line $\ell_4^{12},\, \ell_5^{12},\,\w\ell_{123}^{45}$ at a point, so $\w\ell_{345}^{12}\subset Q$.
Since $\w\Pi_{123}\cap\w\Pi_{456}\neq\emptyset$, we have that the lines $\w\ell_{123}^{45}$ and $\w\ell_{456}^{12}$ intersect, so $\w\ell_{456}^{12}\subset Q$ too.
Finally, this implies $\w\ell_{126}^{45}\subset Q$ and so $\w\ell_{126}^{45}\cap \w\ell_{345}^{12}\neq \emptyset$, that is $\w\Pi_{126}\cap\w\Pi_{345}\neq\emptyset$.
\begin{figure}[h]\caption{\small Lines in $D_{12}\cap D_{45}$.}\label{lines-rulingsV}
\vspace{10pt}
\begin{tikzpicture}[scale=.9]
\draw (0,-1) -- (0,4)
      node[midway,left] {$\ell_1^{45}$};

\draw (1.5,-0.5) -- (1.5,3.5)
      node[midway,right] {$\w\ell_{456}^{12}$};

\draw (5.5,-0.5) -- (5.5,3.5)
      node[midway,left] {$\w\ell_{345}^{12}$};

\draw (7,-1) -- (7,4)
      node[midway,right] {$\ell_2^{45}$};

\draw (-1,0) -- (8,0)
      node[pos=.4,below] {$\ell_5^{12}$};

\draw (-0.5,1) -- (7.5,1)
      node[pos=.55,below] {$\w\ell_{123}^{45}$};

\draw (-0.5,2) -- (7.5,2)
      node[pos=.45,above] {$\w\ell_{126}^{45}$};

\draw (-1,3) -- (8,3)
      node[pos=.6,above] {$\ell_4^{12}$};

\fill (0,3) circle (2.5pt)
      node[above left] {$p_{14}$};

\fill (0,0) circle (2.5pt)
      node[below left] {$p_{15}$};

\fill (1.5,2) circle (2.5pt)
      node[above left] {$r_6$};

\fill (5.5,1) circle (2.5pt)
      node[below right] {$r_3$};

\fill (7,3) circle (2.5pt)
      node[above right] {$p_{24}$};

\fill (7,0) circle (2.5pt)
      node[below right] {$p_{25}$};

\draw (1.5,1) circle (2.5pt);

\draw (5.5,2) circle (2.5pt);

\fill (0,1) circle (1.75pt);
\fill (0,2) circle (1.75pt);
\fill (1.5,0) circle (1.75pt);
\fill (5.5,0) circle (1.75pt);
\fill (1.5,3) circle (1.75pt);
\fill (5.5,3) circle (1.75pt);
\fill (7,1) circle (1.75pt);
\fill (7,2) circle (1.75pt);
\end{tikzpicture}
\end{figure}

Let now assume that $\Pi_1,\dots,\Pi_6$ are as in Corollary \ref{existence planes V}.
That is, the points
$$[L_1], \dots, [L_6]\in \pr^2=\pr\bigl(\C[y_0, y_1,y_2]_1\bigr)$$
corresponding to $\Pi_1,\dots,\Pi_6$ do not lie on a conic, and $p_{ij}=[L_i\cdot L_j]\in \pr^5$ (cf.\ $\S$\ref{veronese}).

To prove \emph{ii.}, by \emph{i.}\ it suffices to show that $\w\Pi_{123}\cap\w\Pi_{456}=\emptyset$.
Assume by contradiction that $\w\Pi_{123}\cap\w\Pi_{456}\neq\emptyset$.
Then, as above, we have that $\w\Pi_{123}\cap\w\Pi_{456}$ is a point not lying on any line $\ell_u^{vw}$ with $\{u,v,w\}=\{1,2,3\},\{4,5,6\}$.
Thus, we have is a linear combination
\begin{equation}\label{lin.comb.}
c_{12}L_1 L_2+c_{13}L_1L_3+c_{23}L_2 L_3=c_{45}L_4L_5+c_{46}L_4L_6+c_{56}L_5 L_6
\end{equation}
with non-vanishing coefficients $\{c_{ij}\}$.
In the dual projective plane $(\pr^2)^*$ we have that the lines $\{L_1, L_2, L_3\}$ and $\{L_4, L_5, L_6\}$ are not concurrent, hence \eqref{lin.comb.} gives a smooth conic through the vertices of the two triangles given by the two triples of lines.
By a classical result, we know that there exists a unique smooth conic $C\subset (\pr^2)^*$ that is inscribed in the two tringles, i.e.\ $C$ is tangent to $L_i$ for all $i\in\{1, \dots, 6\}$.
Therefore, the dual conic $C^*\subset \pr^2$ is smooth and contains $[L_i]$ for all $i\in\{1, \dots, 6\}$, a contradiction.
\end{proof}

\begin{lemma}\label{I_4(1)}
Let $\Pi_1,\dots,\Pi_4\subset\pr^5$ be planes as in Corollary \ref{existence planes V}. Then, $h^i(\pr^5,\mathfrak I_{4,1})=0$ for $i\in\{0,1,2\}$.
\end{lemma}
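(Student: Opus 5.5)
The plan is to compare $\mathfrak I_{4,1}$ with $\ol_X(1)$, where $X:=\Pi_1\cup\dots\cup\Pi_4$, through the restriction sequence
\begin{equation*}
0\to \mathfrak I_{4,1}\to \ol_{\pr^5}(1)\to \ol_X(1)\to 0 .
\end{equation*}
Since $h^i(\pr^5,\ol_{\pr^5}(1))=0$ for $i\geq1$ and $h^0(\pr^5,\ol_{\pr^5}(1))=6$, the claim reduces to two facts:
\begin{enumerate}[(a)]
\item $h^0(\ol_X(1))=6$ and $h^1(\ol_X(1))=0$;
\item the restriction map $H^0(\ol_{\pr^5}(1))\to H^0(\ol_X(1))$ is injective.
\end{enumerate}
Indeed, (b) gives $h^0(\mathfrak I_{4,1})=0$. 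Then (a) together with (b) makes the restriction map an isomorphism of $6$-dimensional spaces, so $h^1(\mathfrak I_{4,1})=0$. Finally $h^2(\mathfrak I_{4,1})=h^1(\ol_X(1))=0$.

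For (b), a nonzero element of the kernel is a hyperplane containing $\Pi_1\cup\dots\cup\Pi_4$, hence containing $\Pi_1,\Pi_2,\Pi_3$. This contradicts the generality condition \textbf{G0}, which holds for these planes by Corollary \ref{existence planes V}. Equivalently, since each $\Pi_i$ is spanned by $\{p_{ij}\}_{j\neq i}$, the six points $p_{ij}$ span $\pr^5$.

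For (a), I would argue inductively on $X_k:=\Pi_1\cup\dots\cup\Pi_k$ using the sequence \eqref{union}, twisted by $\ol(1)$, with $X=X_{k-1}$ and $Y=\Pi_k$:
\begin{equation*}
0\to \ma I_{X_{k-1}\cap\Pi_k/\Pi_k}(1)\to \ol_{X_k}(1)\to\ol_{X_{k-1}}(1)\to0 .
\end{equation*}
The key point is that the scheme $X_{k-1}\cap \Pi_k$ is the reduced set of $k-1$ distinct points $\{p_{ik}\}_{i<k}$. This holds because two planes meeting at one point in $\pr^5$ meet transversally, so locally the ideal is $(x_1,x_2)+(x_3,x_4)$, as in Lemma \ref{local blowup (n-2)-planes}, and the points $p_{ik}$ are distinct by Setup \ref{setup}.

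On $\Pi_k\cong\pr^2$ the ideal sheaf of these points, twisted by $\ol(1)$, has cohomology as follows:
\begin{itemize}
\item for $k=2$ (one point): $h^0=2$;
\item for $k=3$ (two points): $h^0=1$;
\item for $k=4$ (three points): $h^0=0$, since $\{p_{14},p_{24},p_{34}\}$ are non-collinear by \textbf{G1};
\item in all cases $h^1=h^2=0$, because at most three non-collinear points impose independent conditions on lines.
\end{itemize}
Starting from $h^0(\ol_{\Pi_1}(1))=3$ with vanishing higher cohomology, induction gives
\begin{equation*}
h^0(\ol_X(1))=3+2+1+0=6 \quad\text{and}\quad h^i(\ol_X(1))=0 \ \text{for } i\geq1 .
\end{equation*}

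The only step needing care is the scheme-theoretic reducedness of $X_{k-1}\cap\Pi_k$, i.e.\ the transversality above. I expect this to be immediate, so the main content is just the count of conditions via \textbf{G0} and \textbf{G1}.
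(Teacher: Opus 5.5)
Your proposal is correct and follows essentially the same route as the paper: \textbf{G0} gives $h^0(\mathfrak I_{4,1})=0$ via the restriction sequence, and then $h^0(\ol_{\Pi_1\cup\dots\cup\Pi_4}(1))=6$ and $h^1(\ol_{\Pi_1\cup\dots\cup\Pi_4}(1))=0$ follow by induction using \eqref{union}, whose kernels are ideal sheaves of $k-1$ points in general position on $\Pi_k\cong\pr^2$ twisted by $\ol(1)$. One small quibble concerns your justification that $(\Pi_1\cup\dots\cup\Pi_{k-1})\cap\Pi_k$ is reduced. Two planes meeting at a point of $\pr^5$ are not transversal there; they are transversal only inside their span $D_{ik}\cong\pr^4$. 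The conclusion still holds, because a scheme-theoretic intersection of linear subspaces is linear, and the distinctness of the $p_{ij}$ ensures that $\Pi_i$ is the only plane among $\Pi_1,\dots,\Pi_{k-1}$ passing through $p_{ik}$.
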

\begin{proof}
We clearly have $h^0(\mathfrak I_{4,1})=0$ by the generality condition \textbf{G0}, so by the exact sequence
$$
0\to \mathfrak I_{4,1} \to \ol_{\pr^5}(1) \to \ol_{\Pi_1\cup\dots\cup\Pi_4}(1)\to 0
$$
it remains to show that $h^0(\ol_{\Pi_1\cup\dots\cup\Pi_4}(1))=6$ and $h^1(\ol_{\Pi_1\cup\dots\cup\Pi_4}(1))=0$.
From \eqref{union} we get
$$
0\to \ma I_{n-1}:=\ma I_{\Pi_n\cap(\Pi_1 \cup \dots \cup \Pi_{n-1})/ \Pi_n}\otimes \ol_{\Pi_n}(1)\to \ol_{\Pi_1\cup\dots\cup\Pi_n}(1) \to \ol_{\Pi_1\cup\dots\cup\Pi_{n-1}}(1)\to 0,
$$
where $\ma I_{n-1}$ is isomorphic to the ideal sheaf of $n-1$ points in general position in $\pr^2$ twisted by $\ol_{\pr^2}(1)$, so $h^1(\ma I_{n-1})=h^2(\ma I_{n-1})=0$ for $2\leq n\leq 4$. 
Therefore,
\begin{equation}\label{cohomology planes}
\begin{gathered}
h^0(\ol_{\Pi_1\cup\dots\cup\Pi_n}(1))=h^0(\ol_{\Pi_1\cup\dots\cup\Pi_{n-1}}(1))+h^0(\ma I_{n-1})\quad \text{and} \\
h^1(\ol_{\Pi_1\cup\dots\cup\Pi_n}(1))=h^1(\ol_{\Pi_1\cup\dots\cup\Pi_{n-1}}(1))
\end{gathered}
\end{equation}
for $2 \leq n\leq 4$, and we conclude that $h^0(\ol_{\Pi_1\cup\dots\cup\Pi_4}(1))=6$ and $h^1(\ol_{\Pi_1\cup\dots\cup\Pi_4}(1))=0$.
\end{proof}

\begin{lemma}\label{I_6(2)}
Let $\Pi_1,\dots,\Pi_6\subset\pr^5$ be planes as in Corollary \ref{existence planes V}. Then, $h^i(\pr^5,\mathfrak I_{6,2})=0$ for $i\in\{0,1\}$.
\end{lemma}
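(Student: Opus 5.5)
The plan is to compare the expected count with the cohomology of the union of the planes. Since $h^0(\pr^5,\ol_{\pr^5}(2))=21$ and $21=6+5+4+3+2+1$, the expected dimension is zero. I will show that $h^0(\ol_{\Pi_1\cup\dots\cup\Pi_6}(2))=21$ with $h^1(\ol_{\Pi_1\cup\dots\cup\Pi_6}(2))=0$, and separately that $h^0(\mathfrak I_{6,2})=0$. Then the long exact sequence of
$$0\to \mathfrak I_{6,2}\to\ol_{\pr^5}(2)\to\ol_{\Pi_1\cup\dots\cup\Pi_6}(2)\to0$$
gives $h^1(\mathfrak I_{6,2})=0$. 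Indeed, the restriction $H^0(\ol_{\pr^5}(2))\to H^0(\ol_{\Pi_1\cup\dots\cup\Pi_6}(2))$ is then an injection between spaces of the same dimension $21$, hence surjective, and $h^1(\ol_{\pr^5}(2))=0$.

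\emph{Cohomology of the union.} I argue exactly as in Lemma \ref{I_4(1)}, using \eqref{union} inductively. The kernel at step $n$ is $\ma I_{\Pi_n\cap(\Pi_1\cup\dots\cup\Pi_{n-1})/\Pi_n}\otimes\ol_{\Pi_n}(2)$. This is the ideal of the $n-1\leq5$ points $p_{1n},\dots,p_{n-1,n}$ in $\Pi_n\cong\pr^2$, twisted by $\ol_{\pr^2}(2)$. By \textbf{G1} these points are in general position (no three collinear), so they impose independent conditions on conics. Hence $h^1=h^2=0$ for this kernel and its $h^0$ equals $6-(n-1)$. Summing over $n$ yields $h^0=21$ and $h^1=0$ for $\ol_{\Pi_1\cup\dots\cup\Pi_6}(2)$.

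\emph{Vanishing of $h^0(\mathfrak I_{6,2})$.} This is the main point, and here I use the Veronese description from $\S$\ref{veronese}.

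First, let $Q$ be a quadric containing all six planes. Since $Q\supset T_{\nu(L_i)}\ma V$, the curve $\nu^{-1}(Q\cap\ma V)$ is either all of $\pr^2$ or a plane quartic singular at the six points $[L_i]$. For six points not on a conic no such quartic exists. Imposing double points at six such points imposes the full $18>15$ conditions; equivalently, one can check that a quartic with six general nodes would have to contain the conic through any five of them together with extra lines, which fails. Hence $Q\supset\ma V$.

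Next, quadrics containing $\ma V$ form the $6$-dimensional space spanned by the $2\times2$ minors. These can be written as $q_A(M)=\operatorname{tr}(A\cdot\operatorname{adj}M)$ for $A$ a symmetric $3\times3$ matrix, where $M$ is the symmetric matrix of a quadratic form. A point of $\Pi_i$ is $M=L_iL'^{T}+L'L_i^{T}$, whose adjugate is, up to scalar, $(L_i\times L')(L_i\times L')^{T}$. Thus $\Pi_i\subset Q$ if and only if the conic $\{A=0\}$ of the dual plane contains the whole line $\{L_i\times L'\}_{L'}=L_i^{\perp}$. Six distinct lines cannot lie on a conic, so $A=0$ and $h^0(\mathfrak I_{6,2})=0$.

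The step I expect to need most care is the claim that no quartic is singular at six points not on a conic. I would justify it directly: such a quartic $C$ meets the conic through five of the points with multiplicity at least $10>8$, so $C$ contains that conic. The residual conic must then pass doubly through those five points and simply through the sixth, which is impossible. Alternatively, one can skip the quartic step entirely: apply the adjugate computation after a Horace-type restriction, or verify directly that the $21$ conditions are independent.
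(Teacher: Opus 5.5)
Your proof is correct, and it takes a genuinely different route from the paper's for the key vanishing.

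\textbf{How the paper argues.} The paper applies the residual sequence \eqref{horace} twice. The first time is with respect to the hyperplane $D_{12}=\Pi_1+\Pi_2$; the residual term is $\mathfrak I_{4,1}$, which is handled by Lemma \ref{I_4(1)}. The second time is with respect to $D_{12}\cap D_{45}\cong\pr^3$. There the argument comes down to the quadric surface $Q$ containing four special lines. The Veronese configuration enters only through Lemma \ref{lem:planes-tildaV}\emph{ii.}, which guarantees $r_6\notin Q$.

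\textbf{How you argue.} You use the union sequence \eqref{union}, exactly as in Lemma \ref{I_4(1)} but twisted by $2$, to get $h^0(\ol_{\Pi_1\cup\dots\cup\Pi_6}(2))=21$ and $h^1=0$. This gives $h^0(\mathfrak I_{6,2})=h^1(\mathfrak I_{6,2})$. You then show $h^0(\mathfrak I_{6,2})=0$ using $\ma V$ directly. The restriction of a quadric $Q$ to $\ma V$ is a plane quartic singular at six points not on a conic, which forces $Q\supset\ma V$. The adjugate parametrization of $H^0(\ma I_{\ma V}(2))$ then turns ``$\Pi_i\subset Q$'' into ``the conic $\{A=0\}$ contains the line $\pr(L_i^\perp)$''.

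\textbf{What each buys.} Your argument is shorter and self-contained: it needs neither Lemma \ref{I_4(1)} nor Lemma \ref{lem:planes-tildaV}. However, it is intrinsically tied to the Veronese surface, since the step $Q\supset\ma V$ has no analogue for other configurations. The paper's Horace argument instead isolates the single geometric condition $\w\Pi_{123}\cap\w\Pi_{456}=\emptyset$ that is actually used (cf.\ Remark \ref{comment-proof}). It is also the template reused for cubics through eight planes in Proposition \ref{expected-dim}.

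\textbf{A slip in the quartic step.} After removing the conic $C_5$ through $q_1,\dots,q_5$, the residual conic $R$ must pass simply through $q_1,\dots,q_5$, because $C_5$ is smooth there. It must be singular at $q_6$, because $q_6\notin C_5$. You have these two conditions swapped. The contradiction then reads: $R$ passes through five points with no three collinear, so $R=C_5$, which misses $q_6$. Also, ``imposes the full $18>15$ conditions'' should simply say that the linear system of such quartics is empty.
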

\begin{proof}
We apply \eqref{horace} to $W=\pr^5$, $X=\Pi_1\cup\dots\cup\Pi_6$ and $D=D_{12}$ to obtain the exact sequence
$$
0\to \mathfrak I_{4,1}\to \mathfrak I_{6,2} \to \ma F:= \ma I_{\Pi_1\cup\Pi_2 \cup\ell_3^{12}\cup\dots\cup\ell_6^{12}/\pr^4}\otimes\ol_{\pr^4}(2)\to 0,
$$
and we deduce from Lemma \ref{I_4(1)} that $h^i(\mathfrak I_{6,2})=h^i(\ma F)$ for $i\in\{0,1\}$.
Applying again \eqref{horace} to $W=D_{12}$, $X=\Pi_1\cup\Pi_2 \cup\ell_3^{12}\cup\dots\cup\ell_6^{12}$ and $D=D_{12}\cap D_{45}$ gives
$$
0\to \ma H:=\ma I_{\Pi_1\cup\Pi_2 \cup\ell_3^{12}\cup\ell_6^{12}/\pr^4}\otimes\ol_{\pr^4}(1)\to \ma F \to \ma G:= \ma I_{L\cup\{r_3,\,r_6\}/\pr^3}\otimes\ol_{\pr^3}(2)\to 0,
$$
where $L:=\ell_1^{45}\cup\ell_2^{45} \cup\ell_4^{12}\cup\ell_5^{12}$, $r_3:=D_{12}\cap D_{45} \cap \Pi_3$ and $r_6:=D_{12}\cap D_{45} \cap \Pi_6$.

\begin{step}\label{G I_6}
We have $h^i(\ma G)=0$ for all $i\geq0$.
\end{step}
\begin{proof}
We argue as in the the proof of Lemma \ref{lem:planes-tildaV}\emph{i.}; see Figure \ref{lines-rulingsV}.
Let $Q\cong\PP$ be the unique quadric surface in $D_{12}\cap D_{45}$ containing $\ell_4^{12},\, \ell_5^{12},\,\w\ell_{123}^{45}$: since $\ell_1^{45}$ and $\ell_2^{45}$ intersect $Q$ at least in three points, it follows that $Q$ contains both $\ell_1^{45}$ and $\ell_2^{45}$. Moreover, $r_3\in Q$ since it belongs to $\w\ell_{123}^{45}$, while $r_6\not\in Q$, otherwise $\w\Pi_{123}\cap\w\Pi_{456}\neq\emptyset$ and we get a contradiction to Lemma \ref{lem:planes-tildaV}\emph{ii.}
We apply \eqref{horace} to $W=D_{12}\cap D_{45}$, $X=L\cup\{r_3,\,r_6\}$ and $D=Q$ and get
$$
0\to \ma I_{\{r_6\}/\pr^3}\to \ma G \to \ma G':=\ma I_{L\cup\{r_3\}/\PP}\otimes\ol_{\PP}(2,2)\to 0.
$$
Note that $L\simeq\ol_{\PP}(2,2)$ as a divisor in $Q$: since $L$ is Cartier and $r_3\not\in L$, we have $\ma I_{L\cup\{r_3\}/\PP} \cong \ma I_{\{r_3\}/\PP}\otimes\ol_{\PP}(-2,-2)$.
Both $\ma I_{\{r_6\}/\pr^3}$ and $\ma G'\cong\ma I_{\{r_3\}/\PP}$ have vanishing cohomology, thus the same holds for $\ma G$.
\end{proof}

\begin{step}\label{H I_6}
We have $h^0(\ma H)=h^1(\ma H)=0$.
\end{step}
\begin{proof}
An hyperplane in $\pr^4$ does not contain two planes meeting along a point, hence $h^0(\ma H)=0$.
From \eqref{union} we get
$$
0\to \ma I_{\{p_{16},\,p_{26}\}/ \ell_6^{12}}\otimes \ol_{\pr^1}(1)\cong\ol_{\pr^1}(-1)\to \ol_{\Pi_1\cup\Pi_2\cup\ell_3^{12}\cup\ell_6^{12}}(1) \to \ol_{\Pi_1\cup\Pi_2\cup\ell_3^{12}}(1)\to 0
$$
since $\ell_3^{12}\cap \ell_6^{12}=\emptyset$ and $(\Pi_1\cup\Pi_2\cup\ell_3^{12})\cap\ell_6^{12}=\{p_{16},\,p_{26}\}$, and similarly we get
$$
0\to \ol_{\pr^1}(-1)\to \ol_{\Pi_1\cup\Pi_2\cup\ell_3^{12}}(1) \to \ol_{\Pi_1\cup\Pi_2}(1)\to 0.
$$
We deduce that $h^0(\ol_{\Pi_1\cup\Pi_2\cup\ell_3^{12}\cup\ell_6^{12}}(1))=5$ from \eqref{cohomology planes} and the short exact sequences above, and so we get $h^1(\ma H)=0$ from
$$
0\to \ma H \to \ol_{\pr^4}(1)\to \ol_{\Pi_1\cup\Pi_2\cup\ell_3^{12}\cup\ell_6^{12}}(1)\to 0.
$$
\end{proof}
Finally, we deduce from Steps \ref{G I_6} and \ref{H I_6} that $h^0(\ma F)=h^1(\ma F)=0$, which concludes the proof.
\end{proof}

\begin{proof}[Proof of Proposition \ref{expected-dim}]
By Remark \ref{exp dim}, it suffices to show $h^0(\pr^5, \w{\mathfrak I}_{8,3})=\w \delta_8=3$ for $\w\Pi:=\w\Pi_{123}$. We apply \eqref{horace} to $W=\pr^5$, $X=\Pi_1\cup\dots\cup\Pi_8\cup\w\Pi$ and $D=D_{12}$ to obtain the exact sequence
$$
0\to \mathfrak I_{6,2}\to \w{\mathfrak I}_{8,3}\to \ma F:= \ma I_{\Pi_1\cup\Pi_2\cup\w\Pi \cup\ell_4^{12}\cup\dots\cup\ell_8^{12}/\pr^4}\otimes\ol_{\pr^4}(3)\to 0,
$$
and we deduce from Lemma \ref{I_6(2)} that $h^0(\w{\mathfrak I}_{8,3})=h^0(\ma F)$.
Applying again \eqref{horace} to $W=D_{12}$, $X=\Pi_1\cup\Pi_2\cup\w\Pi \cup\ell_4^{12}\cup\dots\cup\ell_8^{12}$ and $D=D_{12}\cap D_{45}$ gives
$$
0\to \ma H:=\ma I_{\Pi_1\cup\Pi_2\cup\w\Pi \cup\ell_6^{12}\cup\ell_7^{12}\cup\ell_8^{12}/\pr^4}\otimes\ol_{\pr^4}(2)\to \ma F \to \ma G:= \ma I_{L\cup\{r_6,\,r_7,\,r_8\}/\pr^3}\otimes\ol_{\pr^3}(3)\to 0,
$$
where $L:=\ell_1^{45}\cup\ell_2^{45} \cup \w\ell_{123}^{45}\cup\ell_4^{12}\cup\ell_5^{12}$ and $r_i:=D_{12}\cap D_{45} \cap \Pi_i$ for $i\in\{6,7,8\}$.

\begin{step}\label{G I_8}
We have $h^0(\ma G)=3$.
\end{step}
\begin{proof}
We argue as in the the proof of Lemma \ref{lem:planes-tildaV}\emph{i.}; see Figure \ref{lines-rulingsV}.
Similarly to Step \ref{G I_6}, let $Q\cong \PP$ be the unique quadric surface in $D_{12}\cap D_{45}$ containing $L$, so $r_6,r_7,r_8\not\in Q$ by Lemma \ref{lem:planes-tildaV}\emph{ii.}
We apply \eqref{horace} to $W=D_{12}\cap D_{45}$, $X=L\cup\{r_6,\,r_7,\,r_8\}$ and $D=Q$, and we obtain
$$
0\to \ma G'':=\ma I_{\{r_6,\,r_7,\,r_8\}/\pr^3}\otimes\ol_{\pr^3}(1)\to \ma G \to \ma I_{L/\PP}\otimes\ol_{\PP}(3,3)\cong\ol_{\PP}(0,1)\to 0,
$$
since $L\simeq\ol_{\PP}(3,2)$ as a divisor in $Q$.
We show that we can assume $\{r_6,\,r_7,\,r_8\}$ to be not collinear; the claim then follows from $h^0(\ma G'')=1$, $h^1(\ma G'')=0$ and $h^0(\ol_{\PP}(0,1))=2$.

Assume by contradiction that $\{r_6,\,r_7,\,r_8\}$ lie on a line $\ell$.
By the generality condition \textbf{G1} the points $\{r_6,\,r_7,\,r_8\}$ are pairwise
distinct, hence $\ell\subset\w\Pi_{678}$ by Lemma \ref{lemma:geometry-3planes},
and $r_6\in\ell_6^{78}$.
Thus, the lines $\ell_6^{12},\,\ell_6^{45},\,\ell_6^{78}$ are concurrent at $r_6$, and this implies that the points $s_i:=D_{13}\cap D_{45} \cap \Pi_i$ for $i\in\{6,7,8\}$ are not collinear. Otherwise, the lines $\ell_6^{13},\,\ell_6^{45},\,\ell_6^{78}$ would be concurrent at $s_6$, so that $r_6=s_6$.
It would then follow that $\{p_{16},\,p_{26},\,p_{36}\}$ are collinear, a contradiction to the generality condition \textbf{G1}.
We conclude that we can repeat the proof for $D_{13}\cap D_{45}$ instead of $D_{12}\cap D_{45}$, and the claim follows.
\end{proof}

\begin{step}\label{H I_8}
We have $h^0(\ma H)=h^1(\ma H)=0$.
\end{step}
\begin{proof}
We apply \eqref{horace} to $W=D_{12}$, $X=\Pi_1\cup\Pi_2\cup\w\Pi \cup\ell_6^{12}\cup\ell_7^{12}\cup\ell_8^{12}$ and $D=D_{12}\cap D_{67}$, and we get
$$
0\to \ma H'':=\ma I_{\Pi_1\cup\Pi_2\cup\w\Pi \cup\ell_8^{12}/\pr^4}\otimes\ol_{\pr^4}(1)\to \ma H \to \ma H':= \ma I_{L'\cup\{r_8'\}/\pr^3}\otimes\ol_{\pr^3}(2)\to 0
$$
where $L':=\ell_1^{67}\cup\ell_2^{67} \cup \w\ell_{123}^{67}\cup\ell_6^{12}\cup\ell_7^{12}$ and $r_8':=D_{12}\cap D_{67} \cap \Pi_8$.
Similarly to Step \ref{G I_6}, we see that $h^0(\ma H')=h^1(\ma H')=0$: $r_8'$ does not belong to the unique quadric surface $Q\cong\PP$ in $D_{12}\cap D_{67}$ containing $L'$, and we apply again \eqref{horace} to $W=D_{12}\cap D_{67}$, $X=L'\cup\{r_8'\}$ and $D=Q$.
Therefore, it remains to show that $h^0(\ma H'')=h^1(\ma H'')=0$.
As in Step \ref{H I_6} we have $h^0(\ma H'')=0$, $h^0(\ol_{\Pi_1\cup\Pi_2\cup\ell_8^{12}}(1))=5$, and we have
$$
0\to \ma I_{\ell_1^{23}\cup\ell_2^{13}/\w\Pi}\otimes \ol_{\w\Pi}(1)\cong\ol_{\pr^2}(-1)\to \ol_{\Pi_1\cup\Pi_2\cup\w\Pi\cup\ell_8^{12}}(1) \to \ol_{\Pi_1\cup\Pi_2\cup\ell_8^{12}}(1)\to 0
$$
from \eqref{union} and Lemma \ref{4planes}.
We conclude that $h^0(\ol_{\Pi_1\cup\Pi_2\cup\w\Pi\cup\ell_8^{12}}(1))=5$, hence $h^1(\ma H'')=0$ by
$$
0\to \ma H'' \to \ol_{\pr^4}(1)\to \ol_{\Pi_1\cup\Pi_2\cup\w\Pi\cup\ell_8^{12}}(1)\to 0.
$$
\end{proof}

Finally, we deduce from Steps \ref{G I_8} and \ref{H I_8} that $h^0(\ma F)=3$, and this concludes the proof.
\end{proof}

\begin{remark}\label{comment-proof}
In order to prove Proposition \ref{expected-dim}, we use the configuration of planes from Corollary \ref{existence planes V} only to show Lemma \ref{lem:planes-tildaV}\emph{ii.}

If we assume that $\w\Pi_{ijk}\cap\w\Pi_{abc}\neq\emptyset$ for some partition $\{i,j,k\mid a,b,c\}$ of $\{1,\dots,6\}$, as in the proof of Lemma \ref{lem:planes-tildaV} we see that we have a linear combination with non-vanishing coefficients
\begin{equation}\label{combinationsV}
c_{ij}^{\mathfrak p}p_{ij}+c_{ik}^{\mathfrak p}p_{ik}+c_{jk}^{\mathfrak p}p_{jk}=c_{ab}^{\mathfrak p}p_{ab}+c_{ac}^{\mathfrak p}p_{ac}+c_{bc}^{\mathfrak p}p_{bc}
\end{equation}
for every partition $\mathfrak p:=\{i,j,k\mid a,b,c\}$ of $\{1,\dots,6\}$, where we consider $p_{ij}$ as a fixed affine point in the corresponding line in $\C^6$.
Each pair of points $(p_{ij},\,p_{ab})$ appears precisely in two combinations as in \eqref{combinationsV}, that is the ones corresponding to the partitions $\mathfrak p=\{i,j,k\mid a,b,c\}$ and $\mathfrak p'=\{i,j,c\mid a,b,k\}$, and we have $c_{ij}^{\mathfrak p}\cdot c_{ab}^{\mathfrak p'}=c_{ab}^{\mathfrak p}\cdot c_{ij}^{\mathfrak p'}$.
Otherwise, it follows from \eqref{combinationsV} that
$$
\left(c_{ij}^{\mathfrak p}\cdot c_{ab}^{\mathfrak p'}- c_{ab}^{\mathfrak p}\cdot c_{ij}^{\mathfrak p'}\right)p_{ab}=c_{ik}^{\mathfrak p}p_{ik}+c_{jk}^{\mathfrak p}p_{jk}+c_{ak}^{\mathfrak p'}p_{ak}+c_{bk}^{\mathfrak p'}p_{bk}-c_{ic}^{\mathfrak p'}p_{ic}-c_{jc}^{\mathfrak p'}p_{jc}-c_{ac}^{\mathfrak p}p_{ac}-c_{bc}^{\mathfrak p}p_{bc}
,$$
that is $p_{ab}\in \Pi_k+\Pi_c=D_{kc}$, a contradiction to the generality condition \textbf{G1}.
Since this holds for all pairs of points $(p_{ij},\,p_{ab})$, it follows that there exist \virg{global} coefficients $\{\gamma_{ij}\}_{1\leq i<j\leq 6}$ and linear combinations
\begin{equation}\label{combinations2V}
\gamma_{ij}p_{ij}+\gamma_{ik}p_{ik}+\gamma_{jk}p_{jk}=\gamma_{ab}p_{ab}+\gamma_{ac}p_{ac}+\gamma_{bc}p_{bc}
\end{equation}
such that \eqref{combinations2V} coincides with \eqref{combinationsV} up to multiplication with some coefficient $\lambda^{\mathfrak p}\neq 0$ that only depends on the partition $\mathfrak p$.
By combining \eqref{combinations2V} for partitions $\{i,j,k\mid a,b,c\}$, $\{i,j,c\mid a,b,k\}$ and $\{i,k,c\mid a,b,j\}$ we obtain
\begin{equation}\label{combinations3V}
\gamma_{ik}p_{ik}+\gamma_{jk}p_{jk}+\gamma_{ak}p_{ak}+\gamma_{bk}p_{bk}+\gamma_{ck}p_{ck}=0
\end{equation}
for all $1\leq k \leq 6$.
Note that equations \eqref{combinations3V} hold for any configuration of $6$ planes as in Setup \ref{setup}: these conditions seem quite restrictive and probably do not hold for the general choice of planes, but we do not know the geometry of the corresponding moduli space.
We expect Lemma \ref{lem:planes-tildaV}\emph{ii.}\ to hold for the general configuration of planes as in Setup \ref{setup} and not limitedly to the one from Corollary \ref{existence planes V}.
\end{remark}

\begin{proof}[Proof of Theorem \ref{existence}]
We show that there exists a smooth cubic fourfold $Z$ and $m\leq 8$ planes $\Pi_1,\dots,\Pi_m\subset Z$ as in Setup \ref{setup} that satisfy the generality conditions $\boldsymbol{(\star)}$, namely the generality conditions \textbf{G0}, \textbf{G1} and \textbf{G2} (see $\S$\ref{gen_cond}).

By Corollary \ref{existence planes V} there exists a configuration of planes $\Pi_1,\dots,\Pi_m$ as in Setup \ref{setup} that satisfies \textbf{G0} and \textbf{G1}, while Proposition \ref{expected-dim} implies that a general cubic fourfold $Z$ containing $\Pi_1,\dots,\Pi_m$ satisfies \textbf{G2}.
Finally, the smoothness of $Z$ follows from Lemma \ref{appendix}.
\end{proof}

\begin{remark}\label{anticanonical sections}
As a consequence of Theorems \ref{main:fano} and \ref{existence}, we have that $h^0(\pr^5, \mathfrak I_{m,3})=\delta_m$ for all $\Pi_1,\dots,\Pi_m\subset Z$ as in Setup \ref{setup} that satisfy the generality conditions $\boldsymbol{(\star)}$.

In fact, let $Z_m$ be the blow-up of $Z$ along $\Pi_1\cup\dots\cup\Pi_m$. The projection formula and \eqref{anticanonical} give
\begin{equation}\label{pushforward}
\sigma_*\ol_{Z_m}(-K_{Z_m})\cong \sigma_*\ol_{Z_m}(-\Exc(\sigma))\otimes \ol_Z(-K_Z) \cong\ma I_{\Pi_1\cup\dots\cup\Pi_m/Z}\otimes\ol_{Z}(3).
\end{equation}
Moreover, \eqref{horace} applied to $W=\pr^5$, $X=\Pi_1\cup\dots\cup\Pi_m$ and $D=Z$ yields
\begin{equation}\label{eq:restriction base locus}
0 \to \ol_{\pr^5}\to \mathfrak I_{m,3} \to \ma I_{\Pi_1\cup\dots\cup\Pi_m/Z}\otimes\ol_{Z}(3) \to 0.
\end{equation}
Therefore, we have isomorphisms
$$
H^0(Z_m, -K_{Z_m})\cong H^0\left(Z, \ma I_{\Pi_1\cup\dots\cup\Pi_m/Z}\otimes\ol_{Z}(3)\right)\cong H^0(\pr^5, \mathfrak I_{m,3}) \big/ \langle s_Z\rangle
$$
where $s_Z\in H^0(\pr^5, \mathfrak I_{m,3})$ is the section corresponding to $Z$.  
Finally, we deduce from Table \ref{final table} that $h^0(Z_m, -K_{Z_m})=\delta_m-1$, where $h^0(Z_m, -K_{Z_m})$ is computed independently (see $\S$\ref{num.inv.}), and the claim follows.
\end{remark}

\section{Anticanonical base locus}\label{abl}
In this section we study the anticanonical linear system $|-K_{Z_m}|$ of the Fano fourfold $Z_m$ from Theorem \ref{main:fano} and we prove Theorem \ref{thm:base-locus}.

Moreover, with Corollary \ref{co:base-locusV} we strengthen the correspondence between $\sigma\colon Z_m\to Z$ and and the blow-up $g\colon S_m \to \pr^2$ along $m$ points in general position, already established in $\S$\ref{8}, in the special case of planes $\Pi^{\ma V}_1, \dots, \Pi^{\ma V}_m\subset Z$ tangent to a Veronese surface $\ma V\subset \pr^5$.
In particular, we show that there exists a ninth plane $\Pi^{\ma V}_9$ that belongs to the base locus of cubic hypersurface sections of $Z$ through $\Pi^{\ma V}_1, \dots, \Pi^{\ma V}_8$, parallel to the ninth point in the base locus of plane cubics through $8$ points in general position.

Notation as in Sections \ref{ssetup} and \ref{sec:existence}.

\begin{proposition}\label{prop:base-locusV}
Let $\Pi_1^{\ma V},\dots,\Pi_8^{\ma V}\subset\pr^5$ be planes as in Corollary \ref{existence planes V}. There exists a ninth plane $\Pi_9^{\ma V}\subset \pr^5$ tangent to $\ma V$ such that:
\begin{enumerate}[i.]\setlength\itemsep{4pt}
\item any eight planes in $\{\Pi_1^{\ma V},\dots,\Pi_9^{\ma V}\}$ are as in Corollary \ref{existence planes V};
\item the base locus $\Bs(|H^0(\mathfrak I_{8,3})|)$ contains $\Pi_1^{\ma V}\cup\dots\cup\Pi_9^{\ma V}$.
\end{enumerate}
\end{proposition}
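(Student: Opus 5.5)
The plan is to translate the problem, via Lasker's description of tangent planes (Lemma \ref{tangent-planes}), into a statement about plane cubics. Let $[L_1],\dots,[L_8]\in\pr^2$ be the points defining $\Pi_i^{\ma V}=T_{\nu(L_i)}\ma V$. Since they are in general position, the pencil of plane cubics through them has a ninth base point $[L_9]$. We set $\Pi_9^{\ma V}:=T_{\nu(L_9)}\ma V$.

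For item \emph{i.} we need that any eight of $[L_1],\dots,[L_9]$ are in general position in the sense of Corollary \ref{existence planes V}. Any eight of the nine base points of a general cubic pencil are again base points of the same pencil, with the ninth as the residual point. The conditions ``no three collinear, no six on a conic, no eight on a cubic singular at one of them'' are symmetric under this exchange. Indeed, a line through $[L_9]$ and two other points, or a conic through $[L_9]$ and five others, would force the remaining points of a cubic of the pencil to be special, which contradicts the generality of $[L_1],\dots,[L_8]$. Corollary \ref{existence planes V} then applies to any eight of the nine planes.

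For item \emph{ii.} the key step is to identify $H^0(\mathfrak I_{8,3})$ concretely. A cubic form $F$ on $\pr^5=\pr(\C[y]_2)$ restricts to $\Pi_i^{\ma V}$ via $\theta_i$ as the plane cubic $L'\mapsto F(L_iL')$. Here $F$ is a cubic on $\mathrm{Sym}^2$, equivalently a symmetric trilinear form on quadrics. We would polarize: $F$ vanishes on $\Pi_i^{\ma V}$ iff $F(L_iL',L_iL',L_iL')=0$ for all $L'$. We then seek a natural family of cubics containing all $\Pi_i$ whose dimension equals $\delta_8=3$ (Proposition \ref{expected-dim}), and whose condition is manifestly symmetric in the base points of the pencil.

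The natural candidate, which we would try first, comes from the secant cubic $\sigma_2(\ma V)=\det$ of Example \ref{secant} and its mixtures with the apolar structure. Write $q\mapsto \det(q)$ and consider $F_C(q):=\langle C,\ q\cdot\mathrm{adj}\rangle$-type cubics built from a plane cubic $C$ in the dual variables. Concretely, we would consider cubics of the form $q\mapsto \Phi(q^{\cdot}\,)$, where $\Phi$ ranges over the pencil $\langle C_1,C_2\rangle$ of plane cubics through $[L_1],\dots,[L_8]$ and is suitably transported by apolarity, together with $\det$. We would check that each such cubic restricted to $\Pi_i^{\ma V}$ is a multiple of $\Phi$ evaluated at $[L_i]$, hence vanishes whenever $\Phi([L_i])=0$. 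This yields a $3$-dimensional space $\langle \det, F_{C_1},F_{C_2}\rangle\subseteq H^0(\mathfrak I_{8,3})$, hence equality by Proposition \ref{expected-dim}. Since $C_1,C_2$ also vanish at $[L_9]$, the same computation shows every element vanishes on $\Pi_9^{\ma V}$, and $\det$ contains all tangent planes because $\mathrm{Tan}(\ma V)=\sigma_2(\ma V)$.

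The main obstacle is the explicit construction of $F_C$: one needs a linear map from plane cubics $C\in \C[y]_3$ to cubic forms on $\C[y]_2$ such that $F_C(L\cdot L')$ equals $C(L)$ times a fixed nonzero cubic in $L'$, modulo $\det$. If a clean formula is elusive, the fallback is a dimension count. The cubics containing $\Pi_1^{\ma V},\dots,\Pi_9^{\ma V}$ impose at most $\delta_8-\delta_9\le 1$ further condition, but the expected count predicts $\delta_9=\frac12(81-189+112)=2<3$, so this must be supplemented by showing, e.g. by a Horace-type residual argument as in Proposition \ref{expected-dim}, that the ninth plane imposes no new condition. That last step is where the symmetry of the pencil must genuinely enter.
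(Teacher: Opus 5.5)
Your choice of $\Pi_9^{\ma V}$ is the paper's. Your argument for item \emph{i.} is about as brief as the paper's ``by construction'', so the real issue is item \emph{ii.} There, the step you yourself call the main obstacle is never carried out, and as you formulate it, it cannot be.

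\begin{itemize}
\item \emph{The construction of $F_C$ is impossible as stated.} Suppose a cubic $F$ on $\pr(\C[y]_2)$ satisfied $F(L\cdot L')=C(L)\,G(L')$ for all $L,L'$, with $G$ fixed. The symmetry $F(L\cdot L')=F(L'\cdot L)$ then forces $G$ to be a multiple of $C$, so $F(L\cdot L')=\lambda\,C(L)C(L')$. This is quadratic in $C$. On the pencil $\langle C_1,C_2\rangle$, the only linear assignment $C\mapsto F_C$ of this shape is zero modulo $\det$.
\item \emph{Your dimension count is off by one.} In fact $\delta_8=\frac12(64-168+112)=4$, not $3$; it is $\w\delta_8$ and $h^0(-K_{Z_8})$ that equal $3$. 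So a three-dimensional space $\langle\det,F_{C_1},F_{C_2}\rangle$ could never equal $H^0(\mathfrak I_{8,3})$, and the appeal to Proposition \ref{expected-dim} does not close the argument.
\item \emph{The fallback is miscounted the same way.} $\Pi_9^{\ma V}$ meets the other eight planes in $8$ points, so the expected drop is $\delta_8-\delta_9=2$, not ``at most $1$''. The residual argument that would show the drop is actually $0$ is not supplied.
\end{itemize}

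The missing idea is to read $F(L\cdot L')$ as a function of $L$ for fixed $L'$, not only as a function of $L'$ for fixed $L_i$. For fixed $L'$, the map $L\mapsto F(L\cdot L')$ is a plane cubic, possibly zero. It vanishes at $[L_1],\dots,[L_8]$ exactly because $F$ vanishes on $\Pi_1^{\ma V},\dots,\Pi_8^{\ma V}$. Hence it lies in the pencil of cubics through these points and vanishes at the ninth base point $[L_9]$. So $F(L_9\cdot L')=0$ for all $L'$, i.e.\ $F$ vanishes on $\Pi_9^{\ma V}$, and no dimension count is needed.

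The same remark repairs your family. For any $F\in H^0(\mathfrak I_{8,3})$, the symmetric $(3,3)$-form $F(L\cdot L')$ lies in ${\rm Sym}^2\langle C_1,C_2\rangle$. The map $F\mapsto F(L\cdot L')$ sends the $56$-dimensional space of cubics onto the $55$-dimensional space of symmetric $(3,3)$-forms, with kernel $\langle\det\rangle$. So the corrected family is $\det$ together with lifts of $C_1(L)C_1(L')$, $C_1(L)C_2(L')+C_2(L)C_1(L')$ and $C_2(L)C_2(L')$. That is four forms, as $\delta_8=4$ predicts.

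The paper argues differently. Write $q_i=[L_i]$ and suppose $\Pi_9^{\ma V}\not\subset Z$.
\begin{itemize}
\item The curve $Z\cap\Pi_9^{\ma V}$ contains the points $\Pi_9^{\ma V}\cap\Pi_j^{\ma V}=\theta_9(q_j)$, so via $\theta_9$ it is a cubic of the pencil. In particular $\nu(q_9)\in Z$.
\item Then $\nu^{-1}(Z\cap\ma V)$ is a sextic, double at $q_1,\dots,q_8$ and passing through $q_9$. By Lemma \ref{sextics} (Alexander--Hirschowitz plus a residual sequence) it is a union of two cubics of the pencil, hence singular at $q_9$.
\item This gives $\Pi_9^{\ma V}=T_{\nu(q_9)}\ma V\subset T_{\nu(q_9)}Z$. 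So $Z\cap\Pi_9^{\ma V}$ would be a cubic of the pencil singular at a base point, which is impossible.
\end{itemize}
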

First, we prove the following lemma.
\begin{lemma}\label{sextics}
Let $q_1,\dots,q_8\in\pr^2$ be points in general position, i.e.\ the blow-up of $\pr^2$ at $\{q_1, \dots,q_8\}$ is a smooth del Pezzo surface, and set $\mathfrak d:=|H^0(\ma I_{\{q_1,\dots,q_8\}/\pr^2}\otimes\ol_{\pr^2}(3))|$.
Let $q_9\in \pr^2$ be the unique point such that $\Bs(\mathfrak d)=\{q_1, \dots, q_9\}$.
Then,
$$|H^0(\ma I_{\{q_1^2,\dots,q_8^2,q_9\}/\pr^2}\otimes\ol_{\pr^2}(6))|=|\{C_1\cup C_2\mid C_i\in \mathfrak d\}|\cong\pr^2.$$
\end{lemma}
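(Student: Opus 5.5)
The plan is to pass to the del Pezzo surface $S_8=\Bl_{\{q_1,\dots,q_8\}}\pr^2$ and compute there, with $l$ and $e_i$ as in $\S$\ref{8}. The pencil $\mathfrak d$ corresponds to $|-K_{S_8}|=|3l-e_1-\dots-e_8|$, with $h^0(-K_{S_8})=2$. Its base point is a single point $q\in S_8$, and $g(q)=q_9$. Since $q_1,\dots,q_8$ are in general position, $q_9$ is distinct from them: otherwise all cubics of $\mathfrak d$ would be singular at some $q_i$, which is impossible on a del Pezzo surface of degree $1$. Sextics with double points at $q_1,\dots,q_8$ then correspond to sections of $|-2K_{S_8}|$. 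By Riemann--Roch and Kodaira vanishing,
$$h^0(S_8,-2K_{S_8})=\chi(\ol_{S_8})+\tfrac12(-2K)\cdot(-3K)=1+3=4.$$
The sextics in the statement are exactly the divisors in $|-2K_{S_8}|$ passing through $q$. Here one has to check that the scheme $\{q_1^2,\dots,q_8^2\}$ (double points) pulls back correctly: a sextic singular at $q_i$ has strict transform in $|6l-2\sum e_i|$, and conversely.

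The key step is to show that $q$ is not a base point of $|-2K_{S_8}|$, so that passing through $q$ imposes exactly one condition and $\dim|H^0(\ma I_{\{q_1^2,\dots,q_8^2,q_9\}}(6))|=4-1-1=2$. It is classical that $|-2K_{S_8}|$ is base point free and defines a double cover $S_8\to Q\subset\pr^3$ of a quadric cone, branched over the vertex and a curve. If I want to avoid citing this, I would restrict to an anticanonical curve $C\in|-K_{S_8}|$ through $q$, which is a smooth (or at least integral) elliptic curve for general $C$. Using $h^1(\ol_{S_8}(-K_{S_8}))=0$, the restriction $H^0(-2K)\to H^0(C,-2K|_C)$ is surjective. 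Since $\deg(-2K|_C)=2$, the linear system $|(-2K)|_C|$ is base point free on the elliptic curve $C$. So $q$ is not a base point of $|-2K_{S_8}|$.

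Next I would identify this net with $\{C_1\cup C_2\}$. The map $\mathrm{Sym}^2H^0(-K_{S_8})\to H^0(-2K_{S_8})$ has image of dimension $3$: its source is $3$-dimensional, and the map is injective because a nonzero quadratic form in two variables factors into linear forms, and distinct members of the pencil give distinct divisors. All products $s_1s_2$ vanish at $q$, so this $3$-dimensional image lies inside the $3$-dimensional subspace of sections vanishing at $q$, and the two coincide. Projectivizing, every element of the linear system is $C_1\cup C_2$ with $C_i\in\mathfrak d$, and this family is $\pr(\mathrm{Sym}^2\C^2)\cong\pr^2$.

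The main obstacle is the first step: the non-degeneracy of $q$ (equivalently, base point freeness of $|-2K_{S_8}|$ at $q$). This is where the general position hypothesis enters, through the smoothness of some anticanonical curve through $q$. I would settle it via the elliptic-curve restriction argument above, or simply cite the classical description of degree-$1$ del Pezzo surfaces \cite{Dolg12}.
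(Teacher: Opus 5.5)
Your proof is correct, and at its core it is the paper's argument transported to $S_8$. The paper works in $\pr^2$ with the residual sequence \eqref{horace} with respect to a smooth cubic $D\in\mathfrak d$. Pulled back to $S_8$, with $C$ the strict transform of $D$, this is
$$0\to\ol_{S_8}(-K_{S_8})\to\ma I_{q/S_8}\otimes\ol_{S_8}(-2K_{S_8})\to\ma I_{q/C}\otimes{\ol_{S_8}(-2K_{S_8})}_{|C}\to0,$$
i.e.\ your restriction sequence twisted by the ideal of $q$. Since the quotient is a line bundle of degree $1$ on an elliptic curve, the paper reads off $h^0=2+1=3$ in one step. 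You instead first compute $h^0(-2K_{S_8})=4$ and then show, via the degree-$2$ restriction, that $q$ imposes exactly one condition, getting $4-1=3$. Your route is slightly more self-contained: Riemann--Roch and Kodaira vanishing on $S_8$ replace the appeal to Alexander--Hirschowitz (which the paper's count does not actually use). Your remark that every binary quadratic form factors also makes explicit why each member of the net is a union $C_1\cup C_2$. The final containment-plus-dimension step is the same in both.

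One side remark is inaccurate, although its conclusion is true and implicit in the statement. If $q_9$ coincided with some $q_i$, the cubics of $\mathfrak d$ would share a tangent direction at $q_i$; they would not all be singular there. The correct reason this cannot happen is that $-K_{S_8}-e_i$ is a nonzero class with $-K_{S_8}\cdot(-K_{S_8}-e_i)=0$, hence not effective since $-K_{S_8}$ is ample. So the restriction $H^0(S_8,-K_{S_8})\to H^0(e_i,\ol_{\pr^1}(1))$ is injective, hence an isomorphism. Since $|\ol_{\pr^1}(1)|$ is base-point free, $q\notin e_i$. This is also what justifies identifying sextics through $q_9$ with sections of $-2K_{S_8}$ vanishing at $q$.
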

\begin{proof}
We have $\mathfrak d\cong\pr^1$, so
$$|\{C_1\cup C_2\mid C_i\in \mathfrak d\}|\cong \PP/\iota\cong\pr^2,$$
where $\iota$ is the involution.
By the Alexander-Hirschowitz Theorem \cite[Theorem 2]{AH95}\footnote{The good postulation of double points in the plane was already proved by Castelnuovo \cite{Castelnuovo} with different methods. See also \cite[Theorem 1.1]{BO08} for a modern formulation.}, we have that $h^0(\ma I_{\{q_1^2,\dots,q_8^2\}/\pr^2}\otimes\ol_{\pr^2}(6))=4$.
Since
$$|\{C_1\cup C_2\mid C_i\in \mathfrak d\}|\subseteq |H^0(\ma I_{\{q_1^2,\dots,q_8^2,q_9\}/\pr^2}\otimes\ol_{\pr^2}(6))|,$$
it suffices to show that $h^0(\ma I_{\{q_1^2,\dots,q_8^2,q_9\}/\pr^2}\otimes\ol_{\pr^2}(6))=3$.

We apply  \eqref{horace} to $W=\pr^2$, $X=\{q_1^2,\dots,q_8^2,q_9\}$ and $D$ a smooth cubic in $\mathfrak d$, and we get
$$
0 \to \ma I_{\{q_1,\dots,q_8\}/\pr^2}\otimes\ol_{\pr^2}(3)\to\ma I_{\{q_1^2,\dots,q_8^2,q_9\}/\pr^2}\otimes\ol_{\pr^2}(6) \to \ma L:=\ma I_{\{q_1^2,\dots,q_8^2,q_9\}/D}\otimes{\ol_{\pr^2}(6)}_{|D}\to 0,
$$
since $D\cap q_i^2$ is the double point of $D$ with support $q_i$, and ${\rm Res}_D(q_i^2)=q_i$ for all $i\in\{1,\dots,8\}$.
The scheme $D\cap q_i^2$ has lenght $2$ for all $i\in\{1,\dots,8\}$, thus the line bundle $\ma L$ has degree $6\cdot3-8\cdot 2-1=1$, and so $h^0(\ma L)=1$.
Finally, we conclude by $h^0(\ma I_{\{q_1,\dots,q_8\}/\pr^2}\otimes\ol_{\pr^2}(3))=2$ and $h^1(\ma I_{\{q_1,\dots,q_8\}/\pr^2}\otimes\ol_{\pr^2}(3))=0$.
\end{proof}

\begin{proof}[Proof of Proposition \ref{prop:base-locusV}]
Let $q_1,\dots,q_8\in\pr^2$ be the points corresponding to $\Pi_1^{\ma V},\dots,\Pi_8^{\ma V}$ via the Veronese embedding $\nu\colon \pr^2 \hookrightarrow \pr^5$.
That is, $\ma V=\nu(\pr^2)$, $\Pi^{\ma V}_i=T_{\nu(q_i)}\ma V$ for all $i\in\{1,\dots,8\}$, and the blow-up of $\pr^2$ at $\{q_1, \dots,q_8\}$ is a smooth del Pezzo surface.
Moreover, let $q_9\in \pr^2$ be the unique point such that
$$\Bs\left(\mathfrak d:=|H^0(\ma I_{\{q_1,\dots,q_8\}/\pr^2}\otimes\ol_{\pr^2}(3))|\right)=\{q_1, \dots, q_9\},$$
and set $\Pi^{\ma V}_9:=T_{\nu(q_9)}\ma V$.

Note that \emph{i.}\ follows by contruction.
To prove \emph{ii.}, we show that $\Pi^{\ma V}_9\subset Z$ for all $Z \in |H^0(\mathfrak I_{8,3})|$, so that $\Pi_9^{\ma V}\subseteq \Bs(|H^0(\mathfrak I_{8,3})|)$; recall that $\Pi_1^{\ma V}\cup\dots\cup\Pi_8^{\ma V}\subseteq \Bs(|H^0(\mathfrak I_{8,3})|)$ by definition.

Assume by contradiction that there exists $Z \in |H^0(\mathfrak I_{8,3})|$ such that $\Pi^{\ma V}_9\not\subset Z$.
The intersection $Z\cap\Pi^{\ma V}_9$ is a plane cubic $C$ in $\mathfrak d$, under the isomorphism $\theta_9\colon \pr^2 \to \Pi^{\ma V}_9$ (cf.\ Lemma \ref{tangent-planes}), thus $Z$ contains the point $\theta_9(q_9)=\nu(q_9)= (\Pi^{\ma V}_9\cap \ma V)_{red}$.
Below we show that $\Pi_9^{\ma V}\subset T_{\nu(q_9)}Z$, so that $C$ is singular at $\nu(q_9)$; on the other hand, all plane cubics in $\mathfrak d$ are smooth at $\Bs(\mathfrak d)$, and we get a contradiction.

If $\ma V\subset Z$, we clearly have $\Pi_9^{\ma V}\subset T_{\nu(q_9)}Z$.
Otherwise, $\ma V\not\subset Z$ and the intersection $Z\cap \ma V$ is a curve $\Gamma\subset \pr^5$ of degree $12$ through $\{\nu(q_1),\dots,\nu(q_9)\}$ that is singular at $\nu(q_i)$ for all $i\in\{1,\dots,8\}$, since $\Pi_i^{\ma V}=T_{\nu(q_i)}\ma V\subset Z$ for all $i\in\{1,\dots,8\}$.
It follows that $\nu^{-1}(\Gamma)$ is a plane sextic in $|H^0(\ma I_{\{q_1^2,\dots,q_8^2,q_9\}/\pr^2}\otimes\ol_{\pr^2}(6))|$, and so $\nu^{-1}(\Gamma)$ is singular at $q_9$ by Lemma \ref{sextics}.
Again, we conclude that $\Pi_9^{\ma V}\subset T_{\nu(q_9)}Z$.
\end{proof}

As a direct outcome of Proposition \ref{prop:base-locusV} we have the following.
\begin{corollary}\label{co:base-locusV}
Let $Z$ be a smooth cubic fourfold containing planes $\Pi_1^{\ma V},\dots,\Pi_8^{\ma V}$ as in Corollary \ref{existence planes V}, and let $\sigma \colon Z_8 \to Z$ be the blow-up along $\Pi_1^{\ma V}\cup\dots\cup\Pi_8^{\ma V}$.
There exists a ninth plane $\Pi_9^{\ma V}\subset Z$ tangent to $\ma V$ such that the base locus $\Bs(|-K_{Z_8}|)$ contains the strict transform of $\Pi_9^{\ma V}$, that is a smooth del Pezzo surface of degree $1$.
\end{corollary}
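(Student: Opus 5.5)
The plan is to deduce the statement from Proposition \ref{prop:base-locusV} together with the identification of anticanonical sections in Remark \ref{anticanonical sections}. The argument has three parts: show that $\Pi_9^{\ma V}\subset Z$, show that its strict transform lies in $\Bs(|-K_{Z_8}|)$, and identify that strict transform as a degree $1$ del Pezzo surface.

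\emph{Containment.} Since $Z$ contains $\Pi_1^{\ma V},\dots,\Pi_8^{\ma V}$, it is an element of $|H^0(\mathfrak I_{8,3})|$. Proposition \ref{prop:base-locusV}\emph{ii.} gives $\Pi_9^{\ma V}\subset\Bs(|H^0(\mathfrak I_{8,3})|)$, and in particular $\Pi_9^{\ma V}\subset Z$. By Lemma \ref{tangent-planes}, $\Pi_9^{\ma V}$ meets each $\Pi_i^{\ma V}$ in the single point $[L_9L_i]$. By Proposition \ref{prop:base-locusV}\emph{i.}, these eight points are distinct and in general position in $\Pi_9^{\ma V}$: under $\theta_9$ they correspond to $q_1,\dots,q_8$, and any eight of the nine base points of a cubic pencil through eight general points are again in general position.

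\emph{Base locus.} Every divisor in $|-K_{Z_8}|$ is of the form $\sigma^*Y-\sum E_i$, where $Y=Z\cap Z''$ is cut out by some $Z''\in|H^0(\mathfrak I_{8,3})|$. This uses the isomorphisms
\[
H^0(Z_8,-K_{Z_8})\cong H^0(\ma I_{\Pi_1\cup\dots\cup\Pi_8/Z}\otimes\ol_Z(3))\cong H^0(\mathfrak I_{8,3})/\langle s_Z\rangle
\]
from Remark \ref{anticanonical sections}, which follow from \eqref{pushforward} and \eqref{eq:restriction base locus}. Since $Z''\supset\Pi_9^{\ma V}$, every such $Y$ contains $\Pi_9^{\ma V}$. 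The plane $\Pi_9^{\ma V}$ is not contained in any $\Pi_i$, so $\sigma^*Y-\sum E_i$ contains the strict transform $P_9$ of $\Pi_9^{\ma V}$. Hence $P_9\subset\Bs(|-K_{Z_8}|)$.

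\emph{Identification of $P_9$.} Since $\Pi_9^{\ma V}$ meets the centres only at the eight points $p_{i9}$, and it meets each $\Pi_i$ transversally there (the two planes span $\pr^4$, so their tangent spaces meet in a point), the strict transform of $\Pi_9^{\ma V}$ under the successive smooth blow-ups of Corollary \ref{blowup-planes} is its blow-up at $p_{19},\dots,p_{89}$. This is the argument already used in Setup \ref{setup} for $P_i$. Because the eight points are in general position, $P_9$ is a smooth del Pezzo surface of degree $1$.

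The main obstacle is this transversality and strict-transform check, and it is local. I would verify it with the coordinates of Lemma \ref{local blowup (n-2)-planes}, where two transversal planes in a smooth fourfold meet in a point and the strict transform is the blow-up of that point.
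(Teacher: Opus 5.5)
Your proposal is correct and follows essentially the paper's route. You transfer Proposition \ref{prop:base-locusV}\emph{ii.} to $Z$ via the restriction sequence \eqref{eq:restriction base locus}; the paper phrases this step through Remark \ref{rem:restriction base locus}. You then identify the strict transform of $\Pi_9^{\ma V}$ as the blow-up at the eight points $p_{i9}$, which correspond to $q_1,\dots,q_8$ under $\theta_9$ from Lemma \ref{tangent-planes}; the transversality check you flag is standard, and the paper takes it for granted as in Setup \ref{setup}.
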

\begin{remark}\label{rem:restriction base locus}
It follows from \eqref{eq:restriction base locus} that
$$
\Bs\left(|H^0(\ma I_{\Pi_1\cup\dots\cup\Pi_m/Z}\otimes\ol_{Z}(3))|\right)=\Bs\left(|H^0(\mathfrak I_{m,3})|\right) \ \forall \,m\geq1.
$$
\end{remark}
\begin{proof}[Proof of Corollary \ref{co:base-locusV}]
By Proposition \ref{prop:base-locusV}\emph{ii.}\ and Remark \ref{rem:restriction base locus} we have
$$\Pi_1^{\ma V}\cup\dots\cup\Pi_9^{\ma V}\subseteq\Bs\bigl(|H^0(\ma I_{\Pi_1^{\ma V}\cup\dots\cup\Pi_8^{\ma V}/Z}\otimes\ol_{Z}(3))|\bigr).$$
It follows that the base locus $\Bs(|-K_{Z_8}|)$ contains the strict transform of $\Pi_9^{\ma V}$, that is the blow-up of $\Pi_9^{\ma V}$ at $\{\Pi_1^{\ma V}\cap\Pi_9^{\ma V}, \dots, \Pi_8^{\ma V}\cap\Pi_9^{\ma V}\}$, and the last claim follows from Lemma \ref{tangent-planes}.
\end{proof}

\begin{corollary}\label{abelian fibration}
Let $\Pi_1^{\ma V},\dots,\Pi_9^{\ma V}\subset Z$ be as in Corollary \ref{co:base-locusV}.
Assume that the scheme-theoretic base locus $\Bs(|-K_{Z_8}|)$ is the strict transform of $\Pi_9^{\ma V}$, and let $\sigma \colon Z_9 \to Z$ be the blow-up along $\Pi_1^{\ma V}\cup\dots\cup\Pi_9^{\ma V}$.
Then, $Z_9$ is smooth and there exists an abelian fibration $\Phi\colon Z_9 \to \pr^2$ such that $-K_{Z_9}\simeq\Phi^*\ol_{\pr^2}(1)$.
\end{corollary}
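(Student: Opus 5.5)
The plan is to resolve the base locus of $|-K_{Z_8}|$ by blowing it up, and then to read off the fibration from the pencil-like net of anticanonical divisors.

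First, smoothness of $Z_9$. By Corollary \ref{co:base-locusV}, $\Pi_9:=\Pi_9^{\ma V}\subset Z$, and by Proposition \ref{prop:base-locusV}\emph{i.}\ together with Lemma \ref{tangent-planes}, the planes $\Pi_1^{\ma V},\dots,\Pi_9^{\ma V}$ pairwise meet at the distinct points $[L_i\cdot L_j]$. Two tangent planes of $\ma V$ span $\pr^4$, so these intersections are transversal. By Corollary \ref{blowup-planes}, $Z_9$ is smooth and equals the blow-up $\pi\colon Z_9\to Z_8$ along the strict transform $P_9$ of $\Pi_9$. Here $P_9$ is a smooth del Pezzo surface of degree $1$.

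Second, the fibration. By Remark \ref{anticanonical sections} and Table \ref{final table}, $h^0(Z_8,-K_{Z_8})=\delta_8-1=3$. By hypothesis the scheme-theoretic base locus of this net is exactly the smooth surface $P_9$. Blowing up the base ideal resolves the indeterminacy, so on $Z_9$ the linear system $|\pi^*(-K_{Z_8})-E_9|$ is basepoint free with $h^0=3$. It therefore defines a morphism $\Phi\colon Z_9\to\pr^2$. Since $-K_{Z_9}\simeq\pi^*(-K_{Z_8})-E_9$ (codimension-$2$ smooth centre), we get $-K_{Z_9}\simeq\Phi^*\ol_{\pr^2}(1)$.

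Surjectivity and dimension of fibres need some care. The image cannot be a curve: otherwise $(-K_{Z_9})^2\equiv0$, while three independent sections would force the image to be a conic, which has too few sections. The cleaner check is that the image is nondegenerate in $\pr^2$ because the three sections are independent, so $\Phi$ is onto $\pr^2$. A general fibre $F$ is then a smooth surface, by generic smoothness, with
$$K_F\simeq (K_{Z_9}+2\Phi^*\ol(1))_{|F}\simeq \Phi^*\ol(1)_{|F}\simeq 0.$$
We also need $F$ connected, that is, the Stein factorization is trivial. For this I would use $h^0(Z_9,-K_{Z_9})=3=h^0(\pr^2,\ol(1))$ together with the fact that, if $\Phi=\iota\circ\Phi'$ with $\iota$ finite of degree $>1$, then $\Phi'^*\iota^*\ol(1)$ would carry more sections unless $\iota$ were an isomorphism. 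A more robust alternative is that $Z_9$ is rationally connected, hence simply connected, which also helps.

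The main obstacle is identifying the general fibre as an abelian surface rather than a K3 surface. For this I would compute $\chi(\ol_F)$, or $h^1(\ol_F)$. The preferred route is to show $q(F)=2$ via $c_2$: compute $c_2(F)=c_2(Z_9)\cdot(-K_{Z_9})^2$ using Lemma \ref{conti}-type Chern class computations for blow-ups along planes. Then show it vanishes, since a K3 would give $24$. Equivalently, compute $\chi(\ol_F)$ by Riemann–Roch from $\chi(Z_9,\ol)=1$ and the Koszul resolution of $F$ as the complete intersection of two divisors in $|-K_{Z_9}|$:
$$\chi(\ol_F)=\chi(\ol)-2\chi(K_{Z_9})+\chi(2K_{Z_9}),$$
evaluated via Serre duality and the vanishing/plurigenera of $Z_9$. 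The hope is that this yields $0$, ruling out K3 (which has $\chi=2$) and leaving an abelian surface, a bielliptic surface being excluded by $K_F\simeq0$.
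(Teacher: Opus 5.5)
Your smoothness argument and the construction of $\Phi$ by blowing up the base ideal agree with the paper. The surjectivity step, however, is a genuine gap. Independence of the three sections only says that $\Phi(Z_9)$ is not contained in a line. A conic is nondegenerate and carries exactly $3$ sections of $\ol(1)$, as does every plane curve of degree $\geq 2$. So nothing you say excludes $|-K_{Z_9}|$ being composed with a pencil, e.g.\ $-K_{Z_9}$ pulled back from $\ol_{\pr^1}(2)$.

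What is missing is a proof that $(-K_{Z_9})^2\not\equiv 0$. The paper gets it from the residual surface $U$ of degree $18$ in $Y_1\cap Y_2$. Intersection-theoretically, since $H$ and $-K_{Z_9}$ are nef, it suffices to check $(-K_{Z_9})^2\cdot H^2=9H^4+\sum_i H^2\cdot E_i^2=27-9=18>0$. This uses $H^3\cdot E_i=0$, $H^2\cdot E_i\cdot E_j=0$ for $i\neq j$, and $H^2\cdot E_i^2=E_i\cdot f_i=-1$.

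Your two arguments for connected fibres also fail as stated. A finite $\iota$ of degree $>1$ onto $\pr^2$ can have $h^0(\iota^*\ol(1))=3$, e.g.\ a double plane branched along a curve of degree $\geq 4$. And $\pi_1(Z_9)=1$ puts no restriction on the Stein factorization; the paper does not dwell on this point either. Finally, take $h^0(-K_{Z_8})=3$ from Proposition \ref{expected-dim} together with \eqref{pushforward} and \eqref{eq:restriction base locus}, not from Table \ref{final table}. Condition \textbf{G2} is not assumed here, so $Z_8$ need not be Fano.

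For identifying $F$, your route genuinely differs from the paper's, which plays the degree $18$ of $U$ against Riemann--Roch on a putative K3. Yours is purely numerical and needs no information on the embedding $U\subset\pr^5$. But as written it stops at a hope, and evaluating it via vanishing would give the wrong answer. The divisor $-K_{Z_9}$ is nef but not big, and assuming $\chi(-K_{Z_9})=h^0(-K_{Z_9})=3$ yields $\chi(\ol_F)=2$, i.e.\ a K3.

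Instead, use $R\sigma_{m*}\ol_{Z_m}(-E_m)=\ma I_{P_m}$ and ${-K_{Z_{m-1}}}_{|P_m}\simeq -K_{P_m}$. These give $\chi(-K_{Z_m})=\chi(-K_{Z_{m-1}})-(1+K_{P_m}^2)=\chi(-K_{Z_{m-1}})-(11-m)$. Hence $\chi(-K_{Z_9})=55-54=1$, so $-K_{Z_9}$ has nonzero higher cohomology. Your Koszul formula then gives $\chi(\ol_F)=1-2+1=0$. Equivalently, Riemann--Roch with $K_{Z_9}^4=0$ gives $c_2(F)=K_{Z_9}^2\cdot c_2(Z_9)=0$.

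Each component of $F$ has trivial canonical bundle, so each has $\chi(\ol)\in\{0,2\}$. Since the total is $0$, every component is an abelian surface. This settles the K3/abelian dichotomy even without knowing connectedness.
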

\begin{proof}
We have that $Z_9$ can be realised as the blow-up $Z_9 \to Z_8$ along the scheme-theoretic base locus $\Bs(|-K_{Z_8}|)$ by Corollary \ref{blowup-planes}, so that $Z_9$ is smooth and $-K_{Z_9}$ is globally generated.
We have $h^0(-K_{Z_9})=h^0(-K_{Z_8})=3$ by Proposition \ref{expected-dim} and \eqref{eq:restriction base locus}, thus the linear system $|-K_{Z_9}|$ defines a morphism
$$\Phi\colon Z_9 \to \pr^2.$$
For general $Y_1,Y_2\in|H^0(\ma I_{\Pi_1^{\ma V}\cup\dots\cup\Pi_8^{\ma V}/Z}\otimes\ol_{Z}(3))|$ we have
$$Y_1\cap Y_2=\Pi_1^{\ma V}\cup\dots\cup\Pi_9^{\ma V}\cup U$$
where $U\subset \pr^5$ is a surface of degree $18$.
We deduce that $-K_{Z_9}$ has numerical dimension $2$, hence $\Phi$ is surjective and $-K_{Z_9}\simeq\Phi^*\ol_{\pr^2}(1)$.

The strict transforms $Y_1',Y_2'\subset Z_9$ of $Y_1,Y_2$ are general in $|-K_{Z_9}|$, so the strict transform $F=Y_1'\cap Y_2'$ of $U$ is a general $\Phi$-fibre, thus a smooth surface, and $\sigma_{|F}\colon F\to U$ is a resolution of singularities.
By adjunction we have $K_F\simeq \ol_F$, hence $F$ is either a smooth $K3$ surface or a smooth abelian surface, and $U$ has at most nodal singularities.

Assume that $F$ is $K3$, and let $\ma L$ be the very ample line bundle on $U$ defining the embedding $U\subset \pr^5$. Then, $(\sigma_{|F})^*{\ma L}$ is nef and big and
$${\ma L}^2=((\sigma_{|F})^*{\ma L})^2=8$$
by Riemann-Roch. Thus $U$ has degree $8$ in $\pr^5$, a contradiction.
\end{proof}

\begin{remark}\label{last-added remark}
Note that  we do not assume that $\Pi_1^{\ma V},\dots,\Pi_8^{\ma V}\subset Z$ satisfy the generality condition \textbf{G2} in Corollaries \ref{co:base-locusV} and \ref{abelian fibration}, thus $Z_8$ needs not to be Fano (cf.\ Lemma \ref{condition:3} and Theorem \ref{m=8}).

If $Z_8$ is Fano, then $|-K_{Z_8}|$ has no fixed part by \cite[Theorem 5.2]{Kawa00}, and the strict transform of $\Pi_9^{\ma V}$ is an irreducible component of $\Bs(|-K_{Z_8}|)$.
\end{remark}

\begin{proof}[Proof of Theorem \ref{thm:base-locus}]
We may assume that $\Pi_1,\dots,\Pi_m\subset Z$ are as in Setup \ref{setup} and satisfy the generality conditions $\boldsymbol{(\star)}$, so by Corollary \ref{co:base-locusV} and Remark \ref{last-added remark} it remains to show that $-K_{Z_m}$ is globally generated if and only if $m\leq7$.

\begin{step}\label{restriction-divisor}
$-K_{Z_8}$ is not globally generated.
\end{step}
\begin{proof}
We have $h^0(-K_{Z_8})=3$ by Remark \ref{anticanonical sections}; see also Table \ref{final table}.
If $-K_{Z_8}$ is globally generated, then the linear system $|-K_{Z_8}|$ defines a morphism $Z_8 \to \pr^2$ and $(-K_{Z_8})^4=0$, a contradiction.
\end{proof}

Since $\Bs(|-K_{Z_{m-1}}|)\subseteq\sigma_m\bigl(\Bs(|-K_{Z_m}|)\bigr)$ for all $m\geq1$, where $\sigma_m\colon Z_m\to Z_{m-1}$ is the blow-up along the strict transform of $\Pi_m$, it suffices to show that $-K_{Z_7}$ is globally generated.

Let $S_7$ be a del Pezzo surface of degree $2$, that is the blow-up $g\colon S_7\to \pr^2$ along $7$ points in general position.
Recall the isomorphism $\alpha\colon \ma N^1(S_7)\to\ma N^1(Z_7)$ from Remark \ref{alpha}, and the definition of ${\rm Cris}(Z_7)$ and $\Exc(Z_7)$ from Remark - Definition \ref{cris_X}.

\begin{step}\label{vector-bundle}
$-K_{Z_7}$ is globally generated.
\end{step}
\begin{proof}
We deduce from \eqref{anticanonical} that
$-K_{Z_7}\simeq E_i+H_{3,i}$ for all $i\in \{1, \dots, 7\}$,
where
$$H_{3,i}\simeq 3H-2E_i-(E_1+ \dots + E_7-E_i)$$
is the strict transform of $H_{3,\,\mathfrak p_{(6,1)}}$ from Corollary \ref{Eff(X)}\emph{iii.}\ with $i=b_1$, and $H_{3,i}\in\Exc(Z_7)$.
Therefore,
$$
\Bs(|-K_{Z_7}|)\subseteq \bigcap_{i=1}^7(E_i\cup H_{3,i}) =:\star,
$$
where $\star$ is the union of all intersections $D_1 \cap \dots\cap D_7$ such that $D_i$ is either $E_i$ or $H_{3,i}$ for all $i\in \{1, \dots, 7\}$.
Note that every $\{D_1, \dots, D_7\}$ contains al least three divisors among the $E_i$'s or among the $H_{3,i}$'s.

Let $l_{3,i}:=\alpha^{-1}(H_{3,i})$ be the corresponding $(-1)$-curve of $S_7$.
We have $l_{3,i}\cdot l_{3,j}=0$ for all $i\neq j$, hence $\{l_{3,1}, \dots, l_{3,7}\}$ are the exceptional curves of a blow-up $g'\colon S_7\to \pr^2$.
It follows from Theorem \ref{m=8}\emph{ii.}\ that $\{H_{3,1}, \dots, H_{3,7}\}$ are the exceptional divisors of $\sigma_\zeta \colon Z_7 \to Z_\zeta$ for some $\zeta \in {\rm Cris}(Z_7)$, and so $H_{3,i}\cap H_{3,j}\cap H_{3,k}=\emptyset$ for all $1\leq i< j <k \leq 7$ (cf.\ Setup \ref{setup}).
Similarly, we have $E_i\cap E_j\cap E_k=\emptyset$ for all $1\leq i< j <k \leq 7$.
We conclude that $\star=\emptyset$, and the claim follows.
\end{proof}
\end{proof}

\section{Invariants and final table}\label{final}
In this section we compute and collect the numerical invariants of the Fano fourfold $Z_m$ from Theorem \ref{main:fano} and comment on its rationality.

\subsection{Numerical invariants}\label{num.inv.}
Some of the invariants in Table \ref{final table} already appear in \cite[Table 11.3]{Casa25}, that is $K_{Z_m}^4$, $h^{2,2}(Z_m)$, $h^{1,3}(Z_m)$, $b_3(Z_m)=2h^{1,2}(Z_m)$ and $\chi(Z_m-K_{Z_m})$.
These can be computed, for instance, via the sequence of smooth blow-ups
$$Z_m \rightarrow Z_{m-1} \to \dots \to Z_1 \rightarrow Z$$
and the formulas in \cite[Lemma 3.2]{CR} and \cite[Lemma 3.1]{Secci23}.

To compute $K_X^2\cdot c_2(X)$ on a smooth Fano fourfold $X$, recall that the Kodaira vanishing and Hirzebruch-Riemann-Roch theorems give
$$h^0(X, -K_X)=\chi(X, -K_X)=1+\frac{1}{12}\bigl(2K_X^4+K_{X}^2\cdot c_2(X)\bigr).$$

The cohomology of the tangent bundle $T_X$ can instead be deduced from the above invariants via \cite[Lemma 6.25]{CCF}, i.e.\
$$\chi(X, T_X)=27-5h^0(X, -K_X)+K_X^4+3\rho(X)-h^{1,2}(X)-h^{2,2}(X)+3h^{1,3}(X).$$
We have $h^0(Z,  T_{Z})=0$ by the Matsumura-Monsky Theorem \cite{MM63}, thus $h^0(Z_m,  T_{Z_m})=0$ for all $m\geq0$ since $h^0(Z_{m+1},  T_{Z_{m+1}})\leq h^0(Z_{m}, T_{Z_{m}})$.
Moreover, $h^i(Z_m, T_{Z_m})=0$ for all $i\geq 2$ by the Nakano vanishing \cite[Theorem 4.2.3]{LazI}, so $\chi(Z_m,T_{Z_m})=-h^1(Z_m, T_{Z_m})$.

\begin{table}[h] \caption{\small Invariants of the blow-up $Z_m$ of the cubic fourfold along $m$ planes.}\label{final table}
\begin{adjustbox}{center}
\begin{tabular}{||c|c|c|c|c|c|c|c|c|c||}
\hline\hline
\rule{0pt}{2.5ex} $m$  & $\rho(Z_m)$ & $K_{Z_m}^4$ & $K_{Z_m}^2\cdot c_2(Z_m)$ & $h^{2,2}(Z_m)$ & $h^{1,3}(Z_m)$ & $h^{1,2}(Z_m)$ & $h^0(-K_{Z_m})$ & $h^0( T_{Z_m})$ & $h^1( T_{Z_m})$ \\
\hline\hline
\rule{0pt}{2.5ex} 0 & 1 & 243 & 162 & 21 & 1 & 0 & 55 & 0 & 20 \\
\hline
\rule{0pt}{2.5ex} 1 & 2 & 192 & 144 & 22 & 1 & 0 & 45 & 0 & 19 \\
\hline
\rule{0pt}{2.5ex} 2 & 3 & 147 & 126 & 24 & 1 & 0 & 36 & 0 & 18 \\
\hline
\rule{0pt}{2.5ex} 3 & 4 & 108 & 108 & 27 & 1 & 0 & 28 & 0 & 17 \\
\hline
\rule{0pt}{2.5ex} 4 & 5 & 75 & 90 & 31 & 1 & 0 & 21 & 0 & 16 \\
\hline
\rule{0pt}{2.5ex} 5 & 6 & 48 & 72 & 36 & 1 & 0 & 15 & 0 & 15 \\
\hline
\rule{0pt}{2.5ex} 6 & 7 & 27 & 54 & 42 & 1 & 0 & 10 & 0 & 14 \\
\hline
\rule{0pt}{2.5ex} 7 & 8 & 12 & 36 & 49 & 1 & 0 & 6 & 0 & 13 \\
\hline
\rule{0pt}{2.5ex} 8 & 9 & 3 & 18 & 57 & 1 & 0 & 3 & 0 & 12 \\
\hline\hline
\end{tabular}
\end{adjustbox}
\end{table}

We conclude with a remark on the Lefschetz defect $\delta$, a numerical invariant introduced in \cite{Casa12} that has proved fundamental to the study of higher-dimensional Fano manifolds, particularly in the classification of Fano fourfolds of large Picard rank\footnote{See, for instance, \cite{CasaSurvey}, \cite{Casa12,CR,CRS} for $\delta\geq 3$, \cite{Casa14, Secci23, Pier25,Casa26} for $\delta=2$, \cite{Casa24,CS24,Casa25} for large Picard rank, and \cite{Verni26} for its behaviour in families.}.
Given a smooth Fano variety $X$, the Lefschetz defect is defined as
$$\delta(X):=\max\bigl\{\codim\ma N_1(D,X)\,|\,D\text{ is a prime divisor in }X\bigr\},$$
where $\ma N_1(Y,X)$ is the image of the push-forward $\iota_*\colon \ma N_1(Y) \to \ma N_1(X)$ induced by the inclusion $\iota \colon Y \hookrightarrow X$ of a subvariety $Y$.
In our setting we have
$$
0\leq\delta(T),\,\delta(Z_1) \leq \delta(Z_2)\leq \dots \leq \delta(Z_8)\leq 1
$$
by \cite[Remark 3.3.18]{Casa12} and \cite[Theorem 1.8]{CS24}, where $T$ is the Verra fourfold from Remark \ref{H_12}.
Note that $\codim \ma N_1(E,Z_m)=0$ for all $E\in \Exc(Z_m)$ and all $m\geq 1$.
Thus, either $\delta(Z_m)=0$, or $\delta(Z_m)=1$ and any prime divisor $D \subset Z_m$ satisfying $\codim \ma N_1(D,Z_m)=1$ is the strict transform of an irreducible hypersurface section of $Z\subset \pr^5$.

\subsection{Rationality}
Although we did not investigate the (ir)rationality of $Z_m$ for any $m\geq 1$, we conclude the paper with a few comments on the subject.

By construction, $Z_m$ is birational to a smooth cubic fourfold $Z\subset \pr^5$ and, for $m\geq 2$, also to a smooth Verra fourfold $T \subset \pr^9$ (see Remark \ref{H_12}).

Irrationality of the very general cubic fourfold was proved in \cite[Theorem 6.8]{KKPY}, but the problem is still open for the very general Verra fourfold.
Nonetheless, equivariant irrationality for the very general \emph{symmetric} Verra fourfold was proved in \cite[Theorem 6.4]{Fay26} using similar techniques from \cite{CKK,KKPY}, while in \cite[Theorem 1]{BGMP} the authors show that a Verra fourfold can never be birational to a very general cubic fourfold.

In our setting, $Z_m$ is the blow-up of a smooth cubic fourfold $Z$ containing $m\geq 1$ planes as in Setup \ref{setup} and so $Z$ is not general in the complete linear system $|\ol_{\pr^5}(3)|$.

Lastly, we remark that a necessary condition for the rationality of smooth cubic fourfolds was proved in \cite[Theorem 56]{Guere26}.

\appendix\section{}
Notation as in Section \ref{sec:existence}.
The following computations were performed using Macaulay2 \cite{M2}.
\begin{lemma}\label{appendix}
Let $\Pi_1^{\ma V},\dots,\Pi_8^{\ma V}$ be planes as in Corollary \ref{existence planes V}, and let $B$ be the scheme-theoretic base locus $Bs(|H^0(\pr^5,\mathfrak I_{8,3})|)$.
Then, the general $Z\in |H^0(\pr^5,\mathfrak I_{8,3})|$ is smooth.
\end{lemma}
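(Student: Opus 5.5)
The natural tool is Bertini's theorem. It shows that a general member $Z$ of the linear system $|H^0(\pr^5,\mathfrak I_{8,3})|$ is smooth away from the base locus $B$. So the whole question is whether a general $Z$ is smooth at the points of $B$. By Proposition \ref{prop:base-locusV}, the reduced support of $B$ contains $\Pi_1\cup\dots\cup\Pi_9$, where $\Pi_i=\Pi_i^{\ma V}$. I would proceed in three steps, and I expect the computation to be done by computer algebra, as the appendix indicates.

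\emph{Step 1: fix explicit data and compute a basis.} Choose eight explicit points $[L_1],\dots,[L_8]\in\pr^2$ in general position; rational or random integer coordinates are fine, and the del Pezzo condition can be checked directly. Write down the tangent planes $\Pi_i=\{[L_iL']\}$ as linear ideals in $\C[x_0,\dots,x_5]=\C[y_0,y_1,y_2]_2$. Then compute the degree-$3$ part of their intersection ideal. By Proposition \ref{expected-dim} it has dimension $\delta_8=4$, so we get a basis $F_0,\dots,F_3$, and the scheme $B=V(F_0,\dots,F_3)$.

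\emph{Step 2: reduce smoothness on $B$ to a singular-locus computation.} Take a general member $F=\sum\lambda_iF_i$. In practice this means random coefficients over $\Q$ or over a large prime field, since smoothness is an open condition. Compute the singular locus $V(F,\partial_0F,\dots,\partial_5F)$ and check that it is empty, for instance by showing that the saturation of the Jacobian ideal is irrelevant. A single smooth member over a finite field $\mathbb F_p$ gives a smooth member over $\Q$ of the lift: smoothness of the reduction implies smoothness of the generic fibre. Once one smooth member exists, openness of smoothness in the linear system gives smoothness for the general member.

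As a conceptual cross-check, and as a fallback in case the direct computation is too heavy, I would use the following standard argument. By Bertini, the singular points of the general $Z$ lie in $B$. Consider the incidence variety
$$\{(p,[F]) : p\in B,\ dF_p=0\}.$$
The general $Z$ is smooth exactly when, for each irreducible component $B'$ of $B$, the fibre over $p\in B'$ has codimension greater than $\dim B'$ in $|H^0(\mathfrak I_{8,3})|\cong\pr^3$. I would do this component by component. The planes $\Pi_i$ should be harmless, because containing a plane only forces a one-dimensional family of singular points, and the rank of the differentials $dF_0,\dots,dF_3$ along $\Pi_i$ can be checked to be $\geq 3$ away from finitely many points. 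The real danger is the residual part of $B$, if any, of positive dimension; in particular the points $p_{ij}$ and possible curves on $\ma V$.

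\emph{Main obstacle.} The hard part is controlling the points of $B$ where all the differentials $dF_i$ are dependent, such as the $p_{ij}$, $\nu(q_9)$, and intersections of $\ma V$ with the planes. This is where a hand argument would require detailed knowledge of the primary decomposition of $B$. So I would rely on the Macaulay2 check of a random member over a prime field, and then lift to characteristic zero by the semicontinuity argument in Step 2.
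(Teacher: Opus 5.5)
Your proposal is correct and follows the paper's proof: you fix explicit rational points, build the tangent planes to $\ma V$, extract the degree-$3$ part of their intersection ideal, take a random member $Z$, and check by computer (Macaulay2) that the saturation of the ideal of $Z$ plus its partial derivatives, taken with respect to the irrelevant ideal, is the whole ring; smoothness over $\C$ and for the general member then follows by base change and openness. The paper simply works over $\Q$, so it does not need your mod-$p$ variant (valid only if the $\mathbb F_p$-member is the reduction of a member of the $\Q$-linear system) or your incidence-variety fallback.
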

\begin{proof}
\begin{step}
We generate general points $q_1, \dots, q_8$ in $\pr^2_{\Q}$.
\end{step}
\begin{verbatim}
randomPoint = () -> (p := for i to 2 list random(-10,10)/random(1,10);
if p == {0,0,0} then randomPoint() else p);

pts = for i from 1 to 8 list randomPoint();
\end{verbatim}

\begin{step}
We map each $q_i$ to $\pr^5_{\Q}$ via the $2$-Veronese embedding $\nu\colon \pr^2_{\Q} \hookrightarrow \pr^5_{\Q}$, and we compute the ideal of the tangent planes $\Pi^{\ma V}_i:=T_{\nu(q_i)}\ma V$ for all $i\in \{1, \dots, 8\}$.
\end{step}
\begin{verbatim}
R = QQ[x0,x1,x2,x3,x4,x5];

v = p -> {p#0^2, p#1^2, p#2^2, 2*p#0*p#1, 2*p#0*p#2, 2*p#1*p#2};

Veronese = ideal(4*x0*x1 - x3^2, 2*x0*x5 - x3*x4, 4*x0*x2 - x4^2,
x3*x5 - 2*x1*x4, 2*x2*x3 - x4*x5, 4*x1*x2 - x5^2);

JV = jacobian Veronese;

TangentPlane = p -> trim ideal(matrix{gens R} * substitute(JV, matrix{p}));

Planes = apply(pts, p -> TangentPlane v p);
\end{verbatim}

\begin{step}
We obtain the linear system $|H^0(\pr^5_{\Q},\mathfrak I_{8,3})|$ of cubic hypersurfaces in $\pr^5_\Q$ containing $\Pi_1^{\ma V}\cup\dots\cup\Pi_8^{\ma V}$ by intersecting the ideals of the $\Pi_i^{\ma V}$'s and selecting the generators of degree $3$.
\end{step}
\begin{verbatim}
I = intersect Planes;

Cubics = ideal select(flatten entries gens I, f -> (degree f)_0 == 3);
\end{verbatim}
In fact, there is no hyperplane containing $3$ of the $\Pi_i^{\ma V}$'s by Corollary \ref{existence planes V} and there is no quadric containing $6$ of the $\Pi_i^{\ma V}$'s by Lemma \ref{I_6(2)}, so the generators of $I$ have degree $\geq 3$ and it simply suffices to select the generators of degree $3$.

\begin{step}\label{smoothness}
We show that a general $Z\in|H^0(\pr^5,\mathfrak I_{8,3})|$ is smooth.
\end{step}
In order to show that a general $Z\in|H^0(\pr^5_{\Q},\mathfrak I_{8,3})|$ is smooth, we verify that the saturation of its Jacobian ideal with respect to the irrelevant ideal is the whole coordinate ring.
\begin{verbatim}
m = ideal gens R;

IZ = ideal random(3, Cubics);

JZ = IZ + ideal jacobian IZ;

Jsat = saturate(JZ, m);

Jsat == R  -- true means Z is smooth
\end{verbatim}
We deduce that $Z$ is also smooth over $\C$ since smoothness is invariant under base change \cite[III, Proposition 10.1(b)]{Hart}.
\end{proof}

\providecommand{\noop}[1]{}
\providecommand{\bysame}{\leavevmode\hbox to3em{\hrulefill}\thinspace}
\providecommand{\MR}{\relax\ifhmode\unskip\space\fi MR }
\providecommand{\MRhref}[2]{%
  \href{http://www.ams.org/mathscinet-getitem?mr=#1}{#2}
}
\providecommand{\href}[2]{#2}

\end{document}